\newcommand{\Sc}{\mathcal{S}}
\newcommand{\Rc}{\mathcal{R}}
\newcommand{\Nbb}{\mathbb{N}}
\newcommand{\Rbb}{\mathbb{R}}
\newcommand{\norm}[1][\cdot]{ \Vert #1 \Vert }
\begin{document}

\title{Proper Generalized Decomposition for Nonlinear Convex
Problems in Tensor Banach Spaces \thanks{This work is supported by
the ANR (French National Research Agency, grants ANR-2010-COSI-006
and ANR-2010-BLAN-0904) and by the PRCEU-UCH30/10 grant of the
Universidad CEU Cardenal Herrera.}}

\titlerunning{PGD for Nonlinear Convex Problems}

\author{Antonio Falc\'o \and Anthony Nouy}

\institute{
A. Falc\'o \at
Departamento de Ciencias, F\'{\i}sicas, Matem\'aticas y de la
Computaci\'on, \\ Universidad CEU Cardenal Herrera, \\
San Bartolom\'e 55 \\
46115 Alfara del Patriarca (Valencia), Spain.\\
\email{afalco@uch.ceu.es} \and A. Nouy \at LUNAM Universit\'e, GeM,
UMR CNRS 6183, Ecole Centrale Nantes, Universit\'e de Nantes\\
1 rue de la No\"e, BP 92101,\\
44321 Nantes Cedex 3, France.\\
\email{anthony.nouy@ec-nantes.fr}
}

\maketitle

\begin{abstract}
Tensor-based methods are receiving a growing interest in scientific
computing for the numerical solution of problems defined in high
dimensional tensor product spaces. A family of methods called Proper
Generalized Decompositions methods have been recently introduced for
the a priori construction of tensor approximations of the solution
of such problems. In this paper, we give a mathematical analysis of
a family of progressive and updated Proper Generalized
Decompositions for a particular class of problems associated with
the minimization of a convex functional over a reflexive tensor
Banach space. \subclass{65K10,49M29} \keywords{Nonlinear Convex
Problems, Tensor Banach spaces, Proper Generalized Decomposition,
High-Dimensional Problems, Greedy Algorithms}
\end{abstract}

\section{Introduction}
Tensor-based methods are receiving a growing interest in scientific
computing for the numerical solution of problems defined in high
dimensional tensor product spaces, such as partial differential
equations arising from stochastic calculus (e.g. Fokker-Planck
equations) or quantum mechanics (e.g. Schr\"odinger equation),
stochastic parametric partial differential equations in uncertainty
quantification with functional approaches, and many mechanical or
physical models involving extra parameters (for parametric
analyses),$\ldots .$ For such problems, classical approximation methods
based on the a priori selection of approximation bases suffer from
the so called ``curse of dimensionality'' associated with the
exponential (or factorial) increase in the dimension of
approximation spaces. Tensor-based methods consist in approximating
the solution $\mathbf{u}\in V$ of a problem, where $V$ is a tensor
space generated by $d$ vector spaces $V_j$ (assume e.g. $V_j =
\mathbb{R}^{n_j}$)\footnote{More precisely, $V$ is the closure with
respect to a norm $\norm{}$ of the algebraic tensor space
$\mathbf{V} = \left._a \bigotimes_{j=1}^d V_j \right.=\mathrm{span}
\left\{ \bigotimes_{j=1}^{d} v^{(j)}: v^{(j)} \in V_{j} \text{ and }
1 \le j \le d\ \right\}$}, using separated representations of the
form
\begin{align} \mathbf{u}\approx \mathbf{u}_m = \sum_{i=1}^m
w_i^{(1)}\otimes \ldots \otimes w_i^{(d)},\quad w_i^{(j)}\in V_j
\label{eq:decomp_intro}
\end{align}
where $\otimes$ represents the Kronecker product. The functions
$w_i^{(j)}$ are not a priori selected but are chosen in an optimal
way regarding some properties of $\mathbf{u}$.
\par A first family of numerical methods based on
classical constructions of tensor approximations
\cite{KOL09,GRA10,OSE10} have been recently investigated for the
solution of high-dimensional partial differential equations
\cite{HAC08b,BAL10,KHO10,MAT11}. They are based on the systematic
use of tensor approximations inside classical iterative solvers.
Another family of methods, called Proper Generalized Decomposition
(PGD) methods \cite{LAD10,CHI10,NOU10,NOU10b,FAL10b}, have been
introduced for the direct construction of representations of type
\eqref{eq:decomp_intro}. PGD methods introduce alternative
definitions of tensor approximations, not based on natural best
approximation problems, for the approximation to be computable
without a priori information on the solution $\mathbf{u}$. The
particular structure of approximation sets allows the interpretation
of PGDs as generalizations of Proper Orthogonal Decomposition (or
Singular Value Decomposition, or Karhunen-Lo\`eve Decomposition) for
the a priori construction of a separated representation
$\mathbf{u}_m$ of the solution. They can also be interpreted as a
priori model reduction techniques in the sense that they provide a
way for the a priori construction of optimal reduced bases for the
representation of the solution. Several definitions of PGDs have
been proposed. Basic PGDs are based on a progressive construction of
the sequence $\mathbf{u}_m$, where at each step, an additional
elementary tensor $\otimes_{k=1}^d w_{m}^{(k)}$ is added to the
previously computed decomposition $\mathbf{u}_{m-1}$
\cite{LAD99b,AMM07,NOU07}. Progressive definitions of PGDs can thus
be considered as Greedy algorithms \cite{TEM08} for constructing
separated representations \cite{LEB09,AMM10}. A possible improvement
of these progressive decompositions consists in introducing some
updating steps in order to capture an approximation of the optimal
decomposition, which would be obtained by defining the whole set of
functions simultaneously (and not progressively). For many
applications, these updating strategies allow recovering good
convergence properties of separated representations
\cite{NOU08b,NOU10b,NOU10}.
\\
\par In \cite{LEB09}, convergence
results are given for the progressive Proper Generalized
Decomposition in the case of the high-dimensional Laplacian problem.
In \cite{FAL10b}, convergence is proved in the more general setting
of linear elliptic variational problems in tensor Hilbert spaces.
The progressive PGD is interpreted as a generalized singular value
decomposition with respect to the metric induced by the operator,
which is not necessarily a crossnorm on the tensor product space.
\\
\par In this paper, we propose a theoretical
analysis of progressive and updated Proper Generalized
Decompositions for a class of problems associated with the
minimization of an elliptic and differentiable functional $J$,
$$
J(\mathbf{u}) = \min_{\mathbf{v}\in V} J(\mathbf{v}),
$$
where $V$ is a reflexive tensor Banach space. In this context,
progressive PGDs consist in defining a sequence of approximations
$\mathbf{u}_m \in V$ defined by
$$
\mathbf{u}_m = \mathbf{u}_{m-1} + \mathbf{z}_m,\quad
\mathbf{z}_m\in\Sc_1
$$
where $\Sc_1$ is a tensor subset with suitable properties (e.g.
rank-one tensors subset, Tucker tensors subset, ...), and where
$\mathbf{z}_m$ is an optimal correction in $\Sc_1$ of
$\mathbf{u}_{m-1}$, defined by
$$
J(\mathbf{u}_{m-1}+\mathbf{z}_m) = \min_{\mathbf{z}\in \Sc_1}
J(\mathbf{u}_{m-1} + \mathbf{z})$$ Updated progressive PGDs consist
in correcting successive approximations by using  the information
generated in the previous steps. At step $m$, after having computed
an optimal correction $\mathbf{z}_m\in \Sc_1$ of $\mathbf{u}_{m-1}$,
a linear (or affine) subspace $U_m \subset V$ such that
$\mathbf{u}_{m-1}+\mathbf{z}_m \in U_m$ is generated from the
previously computed information, and the next approximation
$\mathbf{u}_m$ is defined by
$$
J(\mathbf{u}_m) = \min_{\mathbf{v}\in U_m} J(\mathbf{v}) \le
J(\mathbf{u}_{m-1} + \mathbf{z}_m)
$$

 The outline of
the paper is as follows. In section \ref{sec:tensor_product}, we
briefly recall some classical properties of tensor Banach spaces. In
particular, we introduce some assumptions on the weak topology of
the tensor Banach space in order for the (updated) progressive PGDs
to be well defined (properties of subsets $\Sc_1$). In section
\ref{sec:problem}, we introduce a class of convex minimization
problems on Banach spaces in an abstract setting. In section
\ref{sec:progressive_pgd}, we introduce and analyze the progressive
PGD (with or without updates) and we provide some general
convergence results.
While working on this paper, the authors became aware of the work
\cite{CAN10}, which provides a convergence proof for the purely
progressive PGD when working on tensor Hilbert spaces. The present
paper can be seen as an extension of the results of \cite{CAN10} to
the more general framework of tensor Banach spaces and to a larger
family of PGDs, including updating strategies and a general
selection of tensor subsets $\Sc_1$.
In section \ref{sec:examples}, we present some classical examples of
applications of the present results: best approximation in $L^p$
tensor spaces (generalizing the multidimensional singular value
decomposition to $L^p$ spaces), solution of $p$-Laplacian problem,
and solution of elliptic variational problems (involving
inequalities or equalities).

\section{Tensor Banach spaces}
\label{sec:tensor_product}

We first consider the definition of the algebraic tensor space
$_{a}\bigotimes _{j=1}^{d}V_{j}$ generated from Banach spaces
$V_{j}$ $\left( 1\leq j\leq d\right)$ equipped with norms
$\Vert\cdot \Vert_j$. As underlying field we choose $\mathbb{R},$
but the results hold also for $\mathbb{C}$. The suffix `$a$' in
$_{a}\bigotimes_{j=1}^{d}V_{j}$ refers to the `algebraic' nature. By
definition, all elements of
\[
\mathbf{V}:=\left.  _{a}\bigotimes_{j=1}^{d}V_{j}\right.
\]
are \emph{finite} linear combinations of elementary tensors $\mathbf{v}%
=\bigotimes_{j=1}^{d}v^{(j)}$ $\left(  v^{(j)}\in V_{j}\right)  .$

A typical
representation format is the Tucker or tensor subspace format%
\begin{equation}
\mathbf{u}=\sum_{\mathbf{i}\in\mathbf{I}}\mathbf{a}_{\mathbf{i}}%
\bigotimes_{j=1}^{d}b_{i_{j}}^{(j)}, \label{(Tucker}%
\end{equation}
where $\mathbf{I}=I_{1}\times\ldots\times I_{d}$ is a multi-index
set with $I_{j}=\{1,\ldots,r_{j}\},$ $r_{j}\leq\dim(V_{j}),$
$b_{i_{j}}^{(j)}\in V_{j}$ $\left(  i_{j}\in I_{j}\right)  $ are
linearly independent (usually orthonormal) vectors, and
$\mathbf{a}_{\mathbf{i}}\in\mathbb{R}$. Here, $i_{j}$ are the
components of $\mathbf{i}=\left(  i_{1},\ldots,i_{d}\right)  $. The
data size is determined by the numbers $r_{j}$ collected in the
tuple $\mathbf{r}:=\left( r_{1},\ldots,r_{d}\right)  $. The set of
all tensors representable by
(\ref{(Tucker}) with fixed $\mathbf{r}$ is%
\begin{equation}
\mathcal{T}_{\mathbf{r}} (\mathbf{V}) :=\left\{  \mathbf{v}\in\mathbf{V}:%
\begin{array}
[c]{l}%
\text{there are subspaces }U_{j}\subset V_{j}\text{ such that}\\
\dim(U_{j})=r_{j}\text{ and }\mathbf{v}\in\mathbf{U}:=\left.  _{a}%
\bigotimes_{j=1}^{d}U_{j}\right.  .
\end{array}
\text{ }\right\}  \label{(Tr}%
\end{equation}
To simplify the notations, the set of rank-one tensors (elementary
tensors) will be denoted by
$$
\mathcal{R}_1(\mathbf{V}) := \mathcal{T}_{(1,\ldots,1)}(\mathbf{V})
= \left\{\otimes_{k=1}^d w^{(k)} : w^{(k)}\in V_k \right\}.
$$
By definition, we then have $\mathbf{V} = \mathrm{span} \,
\mathcal{R}_1(\mathbf{V}).$ We also introduce the set of rank-$m$
tensors defined by
$$\mathcal{R}_m(\mathbf{V}) := \left\{\sum_{i=1}^m \mathbf{z}_i:
 \mathbf{z}_i\in \mathcal{R}_1(\mathbf{V}) \right\}.$$

We say that $\mathbf{V}_{\left\Vert \cdot\right\Vert }$ is a
\emph{Banach tensor space} if there exists an algebraic tensor space
$\mathbf{V}$ and a norm $\left\Vert \cdot\right\Vert $ on
$\mathbf{V}$ such that $\mathbf{V}_{\left\Vert \cdot\right\Vert }$
is the completion of $\mathbf{V}$ with respect to the norm
$\left\Vert \cdot\right\Vert $, i.e.
\[
\mathbf{V}_{\left\Vert \cdot\right\Vert }:=\left.  _{\left\Vert
\cdot \right\Vert }\bigotimes_{j=1}^{d}V_{j}\right.
=\overline{\left. _{a}\bigotimes\nolimits_{j=1}^{d}V_{j}\right.
}^{\left\Vert \cdot\right\Vert }.
\]
If $\mathbf{V}_{\left\Vert \cdot\right\Vert }$ is a Hilbert space,
we say that $\mathbf{V}_{\left\Vert \cdot\right\Vert }$ is a
\emph{Hilbert tensor space}.

\subsection{Topological properties of Tensor Banach spaces}

Observe that $\mathrm{span} \, \mathcal{R}_1(\mathbf{V})$ is dense
in $\mathbf{V}_{\|\cdot \|}.$ Since $\mathcal{R}_1(\mathbf{V})
\subset \mathcal{T}_{\mathbf{r}}(\mathbf{V})$ for all $\mathbf{r}
\ge (1,1,\ldots,1),$ then $\mathrm{span} \,
\mathcal{T}_{\mathbf{r}}(\mathbf{V})$ is also dense in
$\mathbf{V}_{\|\cdot \|}.$

Any norm $\left\Vert \cdot\right\Vert $ on $\left.  _{a}\bigotimes_{j=1}%
^{d}V_{j}\right.  $ satisfying%
\begin{equation}
\left\Vert \bigotimes\nolimits_{j=1}^{d}v^{(j)}\right\Vert =\prod
\nolimits_{j=1}^{d}\Vert v^{(j)}\Vert_{j}\qquad\text{for all
}v^{(j)}\in
V_{j}\text{ }\left(  1\leq j\leq d\right)  \label{(rcn a}%
\end{equation}
is called a \emph{crossnorm}.

\begin{remark}
\label{tensor product continuity}Eq. (\ref{(rcn a}) implies the
inequality
$\Vert\bigotimes\nolimits_{j=1}^{d}v^{(j)}\Vert\lesssim\prod\nolimits_{j=1}%
^{d}\Vert v^{(j)}\Vert_{j}$ which is equivalent to the continuity of
the
tensor product mapping%
\begin{equation}
\bigotimes
:\mathop{\mathchoice{\raise-0.22em\hbox{\huge $\times$}} {\raise-0.05em\hbox{\Large $\times$}}{\hbox{\large $\times$}}{\times}}_{j=1}%
^{d}\left(  V_{j},\left\Vert \cdot\right\Vert _{j}\right)
\longrightarrow \left(  \left.  _{a}\bigotimes_{j=1}^{d}V_{j}\right.
,\left\Vert
\cdot\right\Vert \right)  , \label{(Tensorproduktabbildung}%
\end{equation}
given by $\otimes\left(  (v^{(1)},\ldots,v^{(d)})\right)  =
\otimes_{j=1}^{d}%
v^{(j)},$ where $(X,\|\cdot\|)$ denotes a vector space $X$ equipped
with norm $\|\cdot\|$.
\end{remark}

As usual, the dual norm to $\left\Vert \cdot\right\Vert $ is denoted
by $\left\Vert \cdot\right\Vert ^{\ast}$. If $\left\Vert
\cdot\right\Vert $ is a crossnorm and also $\left\Vert
\cdot\right\Vert ^{\ast}$ is a crossnorm on $\left.  _{a}\bigotimes_{j=1}%
^{d}V_{j}^{\ast}\right.  $, i.e.%
\begin{equation}
\left\Vert \bigotimes\nolimits_{j=1}^{d}\varphi^{(j)}\right\Vert ^{\ast}%
=\prod\nolimits_{j=1}^{d}\Vert\varphi^{(j)}\Vert_{j}^{\ast}\qquad\text{for
all }\varphi^{(j)}\in V_{j}^{\ast}\text{ }\left(  1\leq j\leq
d\right)  ,
\label{(rcn b}%
\end{equation}
$\left\Vert \cdot\right\Vert $ is called a \emph{reasonable
crossnorm}.
Now, we introduce the following norm.

\begin{definition}
Let $V_{j}$ be Banach spaces with norms $\left\Vert \cdot\right\Vert
_{j}$
for $1\leq j\leq d.$ Then for $\mathbf{v}\in\mathbf{V}=\left.  _{a}%
\bigotimes_{j=1}^{d} V_{j}\right.  $, we define the norm $\left\Vert
\cdot\right\Vert
_{\vee}$ by%
\begin{equation}
\left\Vert \mathbf{v}\right\Vert _{\vee}:=\sup\left\{
\frac{\left\vert \left(
\varphi^{(1)}\otimes\varphi^{(2)}\otimes\ldots\otimes\varphi^{(d)}\right)
(\mathbf{v})\right\vert }{\prod_{j=1}^{d}\Vert\varphi^{(j)}\Vert_{j}^{\ast}%
}:0\neq\varphi^{(j)}\in V_{j}^{\ast},1\leq j\leq d\right\}  .
\label{(Norm ind*(V1,...,Vd)}%
\end{equation}
\end{definition}

The following proposition has been proved in \cite{FalcoHackbusch}.

\begin{proposition}
\label{tr weakly closed}Let $\mathbf{V}_{\left\Vert \cdot\right\Vert
}$ be a Banach tensor space with a norm satisfying $\|\cdot\|
\gtrsim \left\Vert \cdot \right\Vert_{\vee}$ on $\mathbf{V}.$ Then
the set $\mathcal{T}_{\mathbf{r}}(\mathbf{V})$ is weakly closed.
\end{proposition}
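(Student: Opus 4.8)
The plan is to realize $\mathcal{T}_{\mathbf{r}}(\mathbf{V})$ as the common zero set of a family of weakly continuous real-valued functions on $\mathbf{V}_{\|\cdot\|}$; since the zero set of a weakly continuous function is weakly closed and an arbitrary intersection of weakly closed sets is weakly closed, this yields the claim directly at the level of the weak topology, so that nets (and not merely sequences) are handled. The single place where the hypothesis $\|\cdot\|\gtrsim\|\cdot\|_\vee$ enters is the following: for any $\varphi^{(j)}\in V_j^{\ast}$ one has, by the very definition of $\|\cdot\|_\vee$,
\[
\left|\left(\bigotimes_{j=1}^d\varphi^{(j)}\right)(\mathbf{v})\right|\le\prod_{j=1}^d\|\varphi^{(j)}\|_j^{\ast}\,\|\mathbf{v}\|_\vee\lesssim\prod_{j=1}^d\|\varphi^{(j)}\|_j^{\ast}\,\|\mathbf{v}\|,
\]
so that every elementary functional $\bigotimes_{j=1}^d\varphi^{(j)}$ extends continuously to $\mathbf{V}_{\|\cdot\|}$ and belongs to its dual. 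In particular each such functional is weakly continuous on $\mathbf{V}_{\|\cdot\|}$.

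Next I would encode the rank constraint. For a fixed $j$, contracting all factors but the $j$-th by functionals $\varphi^{(k)}\in V_k^{\ast}$ ($k\neq j$) produces an element $v$ of $V_j$ (the contraction being continuous by the previous step), and the minimal subspace $U_j^{\min}(\mathbf{v})$ is the span of all such contractions; membership $\mathbf{v}\in\mathcal{T}_{\mathbf{r}}(\mathbf{V})$ is equivalent to $\dim U_j^{\min}(\mathbf{v})\le r_j$ for every $j$. A span has dimension at most $r_j$ precisely when every family of $r_j+1$ of its generators is linearly dependent, and linear dependence of vectors $v_1,\dots,v_{r_j+1}\in V_j$ is equivalent to the vanishing of all determinants $\det\left(\psi_i(v_l)\right)_{i,l=1}^{r_j+1}$ as $\psi_1,\dots,\psi_{r_j+1}$ range over $V_j^{\ast}$ (biorthogonality produces a nonzero determinant in the independent case). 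Writing $v_l=\left(\bigotimes_{k\neq j}\varphi_l^{(k)}\right)(\mathbf{v})$, each entry $\psi_i(v_l)=\left(\psi_i\otimes\bigotimes_{k\neq j}\varphi_l^{(k)}\right)(\mathbf{v})$ is a full elementary functional applied to $\mathbf{v}$, hence weakly continuous by the first step; the determinant, being a polynomial in these entries, is then a weakly continuous function of $\mathbf{v}$. Intersecting the zero sets of all these determinants over all $j$, all column tuples $(\varphi_l^{(k)})_{k\neq j}$, and all row functionals $\psi_i$ gives exactly $\{\mathbf{v}:\dim U_j^{\min}(\mathbf{v})\le r_j\ \text{for all }j\}$, a weakly closed set.

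The delicate point, which I expect to be the main obstacle, is the converse inclusion for elements of the completion: one must verify that if all these determinants vanish, then $\mathbf{v}$ --- a priori only an element of $\mathbf{V}_{\|\cdot\|}$ --- genuinely lies in $\mathcal{T}_{\mathbf{r}}(\mathbf{V})\subset\mathbf{V}$. The vanishing conditions only assert that each $U_j^{\min}(\mathbf{v})$ is finite-dimensional of dimension at most $r_j$; one still has to deduce that $\mathbf{v}\in\bigotimes_{j=1}^d U_j^{\min}(\mathbf{v})$, so that $\mathbf{v}$ is an algebraic tensor of bounded rank. I would obtain this by choosing, for each $j$, a continuous projection $P_j\colon V_j\to U_j^{\min}(\mathbf{v})$ (available because the target is finite-dimensional), showing that $P_1\otimes\cdots\otimes P_d$ extends to a bounded operator on $\mathbf{V}_{\|\cdot\|}$ with range in the finite-dimensional space $\bigotimes_{j=1}^d U_j^{\min}(\mathbf{v})$, and verifying $\left(\bigotimes_{j=1}^d P_j\right)\mathbf{v}=\mathbf{v}$. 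The boundedness of this tensor-product projection on the completion is precisely the technical step secured by the hypothesis $\|\cdot\|\gtrsim\|\cdot\|_\vee$ together with the continuity of the contraction maps; this is the mechanism by which finite minimal-subspace dimensions force membership in the algebraic tensor product, and it closes the argument.
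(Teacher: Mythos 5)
First, a point of comparison: the paper does not prove this proposition at all --- it is quoted from the reference \cite{FalcoHackbusch} --- so your attempt can only be judged on its own merits. The first half of your argument is correct and is genuinely part of the standard proof: under $\Vert\cdot\Vert\gtrsim\Vert\cdot\Vert_{\vee}$ every elementary functional $\bigotimes_{j}\varphi^{(j)}$ extends to an element of $\mathbf{V}_{\Vert\cdot\Vert}^{\ast}$, linear dependence of $r_j+1$ vectors in $V_j$ is detected by your determinants via Hahn--Banach, and hence the common zero set $Z$ of the determinant functions is weakly closed and contains $\mathcal{T}_{\mathbf{r}}(\mathbf{V})$. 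This correctly shows that any weak limit of elements of $\mathcal{T}_{\mathbf{r}}(\mathbf{V})$ has all of its spans of contractions of dimension at most $r_j$.

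The gap is at the step you yourself flag, and it is not merely delicate: the reverse inclusion $Z\subset\mathcal{T}_{\mathbf{r}}(\mathbf{V})$ is false in this generality, so the strategy of exhibiting $\mathcal{T}_{\mathbf{r}}(\mathbf{V})$ as a zero set of weakly continuous functions cannot be completed. The hypothesis only gives a continuous extension $\iota:\mathbf{V}_{\Vert\cdot\Vert}\to\mathbf{V}_{\Vert\cdot\Vert_{\vee}}$ of the identity, and $\iota$ need not be injective; for instance, the canonical map from a projective to an injective tensor product completion has a nontrivial kernel when a factor lacks the approximation property. Any nonzero $\mathbf{w}\in\ker\iota$ is annihilated by every elementary functional, so all of its contractions vanish and every one of your determinants vanishes at $\mathbf{w}$; yet $\mathbf{w}\notin\mathbf{V}$, whence $\mathbf{w}\in Z\setminus\mathcal{T}_{\mathbf{r}}(\mathbf{V})$. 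The same element defeats your proposed repair: you present the boundedness of $\bigotimes_{j}P_j$ as the technical crux, but that is the easy part (it is a finite sum of maps of the form ``continuous elementary functional times fixed elementary tensor''), while the real crux is the unproved identity $\left(\bigotimes_{j}P_j\right)\mathbf{v}=\mathbf{v}$, which for $\mathbf{w}$ as above reads $\left(\bigotimes_{j}P_j\right)\mathbf{w}=\mathbf{0}\neq\mathbf{w}$. No argument can derive $\mathbf{v}\in\left._a\bigotimes_{j}U_{j,\min}(\mathbf{v})\right.$ from the vanishing of the determinants alone; a correct proof must exploit that $\mathbf{v}$ is the weak limit of a bounded sequence of \emph{algebraic} tensors $\mathbf{v}_n\in\mathcal{T}_{\mathbf{r}}(\mathbf{V})$, for example by extracting convergent representations of the $\mathbf{v}_n$ in bases of their minimal subspaces with coefficients controlled through $\Vert\cdot\Vert_{\vee}$, which is the route taken in the cited reference.
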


\subsection{Examples}\label{several_examples}

\subsubsection{The Bochner spaces}\label{BochnerS}
Our first example, the Bochner spaces, are a generalization of the
concept of $L^p$-spaces to functions whose values lie in a Banach
space which is not necessarily the space $\mathbb{R}$ or
$\mathbb{C}.$

Let $X$ be a Banach space endowed with a norm $\norm_X$. Let
$I\subset \Rbb^{s}$ and $\mu$ a finite measure on $I$ (e.g. a
probability measure). Let us consider the Bochner space
$L^p_\mu(I;X)$, with $1\le p <\infty$, defined by
$$
L^p_\mu(I;X) = \left\{v:I\rightarrow X: \int_{I}  \Vert v(x)
\Vert_X^p d\mu(x) <\infty \right\},
$$
and endowed with the norm
$$
\norm[v]_{\Delta_p} = \left(\int_{I}  \Vert v(x) \Vert_X^p
d\mu(x)\right)^{1/p}
$$
We now introduce the tensor product space
$\mathbf{V}_{\norm[\cdot]_{\Delta_p}} = X
\otimes_{\norm[\cdot]_{\Delta_p}} L^p_\mu(I).$ For $1 \le p <
\infty,$ the space $L^p_\mu(I;X)$ can be identified with
$\mathbf{V}_{\norm[\cdot]_{\Delta_p}}$ (see Section 7, Chapter 1 in
\cite{DefantFloret}). Moreover, the following proposition can be
proved (see Proposition~7.1 in \cite{DefantFloret}):

\begin{proposition}\label{DF}
For $1 \le p < \infty$, the norm $\norm[\cdot]_{\Delta_p}$ satisfies
$\|\cdot\|_{\Delta_p} \gtrsim \left\Vert \cdot \right\Vert _{\vee}$
on $X \otimes_a L^p_\mu(I).$
\end{proposition}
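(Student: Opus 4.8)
The plan is to prove the sharp pointwise bound $\Vert\mathbf{v}\Vert_{\vee}\le\Vert\mathbf{v}\Vert_{\Delta_p}$ for every $\mathbf{v}\in X\otimes_a L^p_\mu(I)$, which already gives $\|\cdot\|_{\Delta_p}\gtrsim\|\cdot\|_{\vee}$ with constant $1$, so no density or completion argument is needed since the inequality holds directly on the algebraic tensor space. Here $d=2$, with factors $V_1=X$ and $V_2=L^p_\mu(I)$, so the injective norm reads
\[
\Vert\mathbf{v}\Vert_{\vee}=\sup\left\{\frac{\vert(\varphi\otimes\psi)(\mathbf{v})\vert}{\Vert\varphi\Vert_X^{\ast}\,\Vert\psi\Vert^{\ast}}:0\neq\varphi\in X^{\ast},\ 0\neq\psi\in (L^p_\mu(I))^{\ast}\right\}.
\]
First I would use the identification $L^p_\mu(I;X)\cong\mathbf{V}_{\Vert\cdot\Vert_{\Delta_p}}$ recalled above to represent $\mathbf{v}$ by a finitely valued function $v:I\to X$: writing $\mathbf{v}=\sum_{k}x_k\otimes f_k$ with $x_k\in X$ and $f_k\in L^p_\mu(I)$, the associated function is $v(x)=\sum_k f_k(x)\,x_k$, so that $\Vert\mathbf{v}\Vert_{\Delta_p}=\left(\int_I\Vert v(x)\Vert_X^p\,d\mu(x)\right)^{1/p}$ by definition of $\Vert\cdot\Vert_{\Delta_p}$.

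Next I would evaluate a decomposable functional on $\mathbf{v}$. Since $1\le p<\infty$ and $\mu$ is finite (hence $\sigma$-finite), the dual is $(L^p_\mu(I))^{\ast}=L^{p'}_\mu(I)$ with $\tfrac1p+\tfrac1{p'}=1$, so any $\psi$ is represented by some $g\in L^{p'}_\mu(I)$ with $\Vert\psi\Vert^{\ast}=\Vert g\Vert_{L^{p'}}$ through $\psi(f)=\int_I g\,f\,d\mu$. Using bilinearity and interchanging the finite sum with the integral, I obtain
\[
(\varphi\otimes\psi)(\mathbf{v})=\sum_k\varphi(x_k)\,\psi(f_k)=\int_I g(x)\,\varphi\!\left(v(x)\right)d\mu(x),
\]
where the last equality uses linearity of $\varphi$ together with $v(x)=\sum_k f_k(x)x_k$.

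Finally I would estimate. The pointwise bound $\vert\varphi(v(x))\vert\le\Vert\varphi\Vert_X^{\ast}\,\Vert v(x)\Vert_X$ followed by H\"older's inequality with exponents $p$ and $p'$ (valid also in the limiting case $p=1$, $p'=\infty$) yields
\[
\vert(\varphi\otimes\psi)(\mathbf{v})\vert\le\Vert\varphi\Vert_X^{\ast}\int_I\vert g(x)\vert\,\Vert v(x)\Vert_X\,d\mu(x)\le\Vert\varphi\Vert_X^{\ast}\,\Vert g\Vert_{L^{p'}}\,\Vert\mathbf{v}\Vert_{\Delta_p}.
\]
Dividing by $\Vert\varphi\Vert_X^{\ast}\Vert\psi\Vert^{\ast}=\Vert\varphi\Vert_X^{\ast}\Vert g\Vert_{L^{p'}}$ and taking the supremum over all nonzero $\varphi$ and $\psi$ gives $\Vert\mathbf{v}\Vert_{\vee}\le\Vert\mathbf{v}\Vert_{\Delta_p}$. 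The central inequality is thus merely H\"older applied inside the integral; the only genuinely delicate points, which I would state with care, are the two identifications invoked at the start — that an algebraic tensor in $X\otimes_a L^p_\mu(I)$ corresponds to a simple $X$-valued function, and that $(L^p_\mu(I))^{\ast}=L^{p'}_\mu(I)$ under the finiteness of $\mu$ — rather than any functional-analytic subtlety in the estimate itself.
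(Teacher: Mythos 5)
Your argument is correct, and with the sharp constant $1$: the identity $(\varphi\otimes\psi)(\mathbf{v})=\int_I g(x)\,\varphi(v(x))\,d\mu(x)$ for a representative $\mathbf{v}=\sum_k x_k\otimes f_k$ with associated function $v(x)=\sum_k f_k(x)x_k$, followed by the pointwise bound $\vert\varphi(v(x))\vert\le\Vert\varphi\Vert_X^{\ast}\Vert v(x)\Vert_X$ and H\"older with exponents $p,p'$, does give $\Vert\mathbf{v}\Vert_{\vee}\le\Vert\mathbf{v}\Vert_{\Delta_p}$ on the algebraic tensor space, which is exactly the claimed domination. The two identifications you flag are indeed the only delicate points, and both are legitimate here: the correspondence between algebraic tensors and finitely-valued (finite-rank) $X$-valued functions is the identification the paper itself invokes from Defant--Floret, and $(L^p_\mu(I))^{\ast}=L^{p'}_\mu(I)$ holds in the case $p=1$ because $\mu$ is finite.

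The comparison with the paper is short: the paper gives no proof at all, deferring entirely to Proposition~7.1 of Defant--Floret, where the statement appears as part of the standard chain $\Vert\cdot\Vert_{\vee}\le\Delta_p\le\Vert\cdot\Vert_{\wedge}$ showing that $\Delta_p$ is a reasonable norm between the injective and projective norms. Your write-up reconstructs the relevant half of that reference's argument in a self-contained way, which is what the citation is standing in for; nothing in your route differs in substance from the textbook proof. One small presentational caution: the well-definedness of $v$ (independence of the chosen representation $\sum_k x_k\otimes f_k$) is part of the identification you cite, so it is covered, but if you wanted to avoid invoking the identification altogether you could simply define $v(x)=\sum_k f_k(x)x_k$ from a fixed representation and observe at the end that the resulting bound is a bound on $\vert(\varphi\otimes\psi)(\mathbf{v})\vert$, which depends only on $\mathbf{v}$.
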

By Propositions \ref{DF} and \ref{tr weakly closed}, we then
conclude:

\begin{corollary}\label{prop:S1Bochner_closed}
For $1 \le p < \infty$, the set $\mathcal{T}_{\mathbf{r}}\left( X
\otimes_{a} L^p_\mu(I) \right)$ is weakly closed in $L^p_\mu(I;X).$
In particular, for $K \subset \mathbb{R}^{k},$ we have that
$\mathcal{T}_{\mathbf{r}}\left( L^p_{\nu}\left(K\right) \otimes_{a}
L^p_{\mu}(I) \right)$ and $\Rc_1\left( L^p_{\nu}\left(K\right)
\otimes_{a} L^p_{\mu}(I) \right)$ are weakly closed sets in
$L^p_{\nu \otimes \mu}\left(K \times I \right).$
\end{corollary}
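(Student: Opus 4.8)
The plan is to obtain both statements by directly chaining the two preceding propositions, the only genuine work being a measure-theoretic identification. First I would record that, by the identification recalled just above the statement of Proposition~\ref{DF}, the Banach tensor space $X \otimes_{\|\cdot\|_{\Delta_p}} L^p_\mu(I)$ coincides with $L^p_\mu(I;X)$ equipped with the norm $\|\cdot\|_{\Delta_p}$. Proposition~\ref{DF} asserts precisely that $\|\cdot\|_{\Delta_p} \gtrsim \|\cdot\|_{\vee}$ on the algebraic tensor space $X \otimes_a L^p_\mu(I)$, which is exactly the hypothesis needed to invoke Proposition~\ref{tr weakly closed}. Applying the latter with $\mathbf{V} = X \otimes_a L^p_\mu(I)$ and $\|\cdot\| = \|\cdot\|_{\Delta_p}$ then yields that $\mathcal{T}_{\mathbf{r}}(X \otimes_a L^p_\mu(I))$ is weakly closed in $L^p_\mu(I;X)$, which is the first claim.

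For the \emph{in particular} part I would specialize $X = L^p_\nu(K)$ with $K \subset \mathbb{R}^{k}$. The key step is the isometric identification
$$ L^p_\mu\bigl(I; L^p_\nu(K)\bigr) \cong L^p_{\nu \otimes \mu}(K \times I), $$
valid for $1 \le p < \infty$ as a consequence of Fubini's theorem together with the construction of the product measure $\nu \otimes \mu$; under this identification an elementary tensor $f \otimes g$ with $f \in L^p_\nu(K)$ and $g \in L^p_\mu(I)$ corresponds to the function $(x,y)\mapsto f(x)g(y)$, so that the algebraic tensor space and its rank structure are transported faithfully. Granting this, the first part applied to $X = L^p_\nu(K)$ gives that $\mathcal{T}_{\mathbf{r}}(L^p_\nu(K) \otimes_a L^p_\mu(I))$ is weakly closed in $L^p_{\nu \otimes \mu}(K \times I)$.

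Finally, for the set $\Rc_1$ I would simply recall the definition $\Rc_1(\mathbf{V}) = \mathcal{T}_{(1,\ldots,1)}(\mathbf{V})$ stated earlier in the paper: here $d = 2$, so $\Rc_1(L^p_\nu(K) \otimes_a L^p_\mu(I)) = \mathcal{T}_{(1,1)}(L^p_\nu(K) \otimes_a L^p_\mu(I))$ is merely the instance $\mathbf{r} = (1,1)$ of what has just been proved, hence also weakly closed. The only nontrivial ingredient is thus the identification of the Bochner space with the product-measure $L^p$ space; everything else is a mechanical application of Propositions~\ref{DF} and~\ref{tr weakly closed}. I expect the main obstacle to be confirming that this identification is a genuine isometric isomorphism of Banach spaces, not merely a set bijection: weak closedness is preserved under an isometric isomorphism (which is a homeomorphism for the weak topologies), so one must verify equality of norms in order to transfer weak closedness from $L^p_\mu(I; L^p_\nu(K))$ to $L^p_{\nu \otimes \mu}(K \times I)$.
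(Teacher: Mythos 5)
Your proposal is correct and follows exactly the paper's route: the paper derives the corollary as an immediate consequence of Propositions~\ref{DF} and~\ref{tr weakly closed}, using the identification $L^p_\mu(I;X)\cong X\otimes_{\|\cdot\|_{\Delta_p}}L^p_\mu(I)$ recalled just before Proposition~\ref{DF}, and specializing $X=L^p_\nu(K)$ for the \emph{in particular} part. The details you supply (the isometric Fubini identification and the observation that $\Rc_1=\mathcal{T}_{(1,1)}$ in dimension $d=2$) are exactly what the paper leaves implicit.
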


\subsubsection{The Sobolev spaces}\label{sec:sobolev}

Let $\Omega = \Omega_1\times \hdots \times \Omega_d\subset \Rbb^d$,
with $\Omega_k\subset\Rbb$. Let $\alpha\in \Nbb^d$ denote a
multi-index and $\vert \alpha\vert=\sum_{k=1}^d \alpha_k$.
$D^\alpha(\mathbf{u}) =
\partial_{x_1}^{\alpha_1} \hdots \partial_{x_d}^{\alpha_d}(\mathbf{u})$
denotes a partial derivative of $\mathbf{u}(x_1,\hdots,x_d)$ of
order $\vert \alpha \vert$. For a fixed $1 \le p < \infty,$ we
introduce the Sobolev space
$$H^{m,p}(\Omega) = \{\mathbf{u}\in L^p(\Omega):D^\alpha(\mathbf{u})
\in L^p(\Omega),0\le
\vert\alpha\vert \le m\}$$ equipped with the norm
$$
\norm[\mathbf{v}]_{m,p} = \sum_{0\le  \vert\alpha\vert \le m}
\norm[D^\alpha(\mathbf{v})]_{L^p(\Omega)}
$$
We let $V_k =H^{m,p}(\Omega_k)$, endowed with norms $\norm_{m,p;k}$
defined by
$$
\norm[w]_{m,p;k} = \sum_{j=0}^m \norm[\partial_{x_k}^j
(w)]_{L^p(\Omega_k)}.
$$
Then we have the following equality
\[
H^{m,p}(\Omega)=\left.  _{\left\Vert \cdot\right\Vert _{m,p}}%
\bigotimes_{j=1}^{d}H^{m,p}(\Omega_{j})\right..
\]

A first result is the following.

\begin{proposition}\label{prop:S1W1p_closed}
For $1<p<\infty,$ $m \ge 0$ and $\Omega =\Omega_1\times \hdots \times
\Omega_d$, the set
$$
\Rc_1\left( \left._a \bigotimes_{j=1}^{d}H^{m,p}(\Omega_{j})\right.
\right) = \left\{ \otimes_{k=1}^d w^{(k)}:w^{(k)}\in
H^{m,p}(\Omega_k)\right\},
$$
is weakly closed in $(H^{m,p}(\Omega),\norm[\cdot]_{m,p})$.
\end{proposition}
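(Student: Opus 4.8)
The obvious strategy—verifying the hypothesis of Proposition~\ref{tr weakly closed}, i.e. $\norm[\cdot]_{m,p}\gtrsim\left\Vert \cdot\right\Vert_{\vee}$ on $\left._a\bigotimes_{j=1}^{d}H^{m,p}(\Omega_j)\right.$—cannot work here, and it is worth noting why before committing to a route. On an elementary tensor $\left\Vert \cdot\right\Vert_{\vee}$ is a crossnorm, so $\left\Vert w^{(1)}\otimes w^{(2)}\right\Vert_{\vee}=\norm[w^{(1)}]_{m,p;1}\,\norm[w^{(2)}]_{m,p;2}$; taking highly oscillatory factors with $\norm[w^{(k)}]_{L^p(\Omega_k)}\to0$ while $\norm[\partial_{x_k}w^{(k)}]_{L^p(\Omega_k)}$ stays bounded away from $0$ makes the ratio $\left\Vert \cdot\right\Vert_{\vee}/\norm[\cdot]_{m,p}$ blow up. So the plan is instead to reduce the problem to the already-settled $L^p$ case and then recover the missing regularity by a bootstrap that genuinely exploits that elements of $\Rc_1$ are single products (which also explains why the statement is restricted to $\Rc_1$ and not to $\mathcal{T}_{\mathbf{r}}$).

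For the reduction, let $\mathbf{u}_n=\otimes_{k=1}^{d}w_n^{(k)}\in\Rc_1$ converge weakly to $\mathbf{u}$ in $(H^{m,p}(\Omega),\norm[\cdot]_{m,p})$. Since the inclusion $H^{m,p}(\Omega)\hookrightarrow L^p(\Omega)$ is bounded it is weak-to-weak continuous, so $\mathbf{u}_n\rightharpoonup\mathbf{u}$ in $L^p(\Omega)$, and since the whole space $H^{m,p}(\Omega)$ is weakly closed we have $\mathbf{u}\in H^{m,p}(\Omega)$. The $L^p$-norm is a reasonable crossnorm on $\left._a\bigotimes_{j=1}^{d}L^p(\Omega_j)\right.$, hence dominates $\left\Vert \cdot\right\Vert_{\vee}$; by Proposition~\ref{tr weakly closed} (exactly as in Proposition~\ref{DF} and Corollary~\ref{prop:S1Bochner_closed}) the set $\Rc_1\left(\left._a\bigotimes_{j=1}^{d}L^p(\Omega_j)\right.\right)$ is weakly closed in $L^p(\Omega)$. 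As each $\mathbf{u}_n$ lies in it, the weak-$L^p$ limit is of rank one: $\mathbf{u}=v^{(1)}\otimes\cdots\otimes v^{(d)}$ with $v^{(k)}\in L^p(\Omega_k)$. If $\mathbf{u}=0$ we are done since $0\in\Rc_1$; otherwise every $v^{(k)}\neq0$.

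It remains to upgrade the $L^p$ factors to $H^{m,p}$ factors, and this is the main obstacle: from the $H^{m,p}$-regularity of the \emph{product} $\mathbf{u}$ alone one must extract the separate $H^{m,p}$-regularity of each factor. Fixing $k=1$, write $\mathbf{u}=v^{(1)}\otimes h$ with $h=v^{(2)}\otimes\cdots\otimes v^{(d)}\in L^p(\Omega')\setminus\{0\}$, $\Omega'=\Omega_2\times\cdots\times\Omega_d$. For $0\le l\le m$ the hypothesis gives $f_l:=\partial_{x_1}^l\mathbf{u}\in L^p(\Omega)$, and testing against products $\psi\otimes\phi$ yields the distributional identity $f_l=(\partial_{x_1}^l v^{(1)})\otimes h$ (here $\partial_{x_1}^l v^{(1)}$ is a priori only a distribution on $\Omega_1$). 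Choosing $\phi\in L^{p'}(\Omega')$ with $c:=\int_{\Omega'}h\,\phi\neq0$, the pairing forces $\partial_{x_1}^l v^{(1)}=\tfrac1c g$ with $g(x_1):=\int_{\Omega'}f_l(x_1,y)\phi(y)\,dy$, and by Hölder in $y$ followed by Fubini $\norm[g]_{L^p(\Omega_1)}\le\norm[\phi]_{L^{p'}(\Omega')}\norm[f_l]_{L^p(\Omega)}<\infty$. Hence $\partial_{x_1}^l v^{(1)}\in L^p(\Omega_1)$ for all $l\le m$, i.e. $v^{(1)}\in H^{m,p}(\Omega_1)$; by symmetry $v^{(k)}\in H^{m,p}(\Omega_k)$ for every $k$, so $\mathbf{u}\in\Rc_1\left(\left._a\bigotimes_{j=1}^{d}H^{m,p}(\Omega_j)\right.\right)$. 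Since $H^{m,p}(\Omega)$ is reflexive and separable for $1<p<\infty$, its weak topology is metrizable on bounded sets, so this weak sequential closedness is the asserted weak closedness.
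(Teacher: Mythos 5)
Your proof is correct, and its overall architecture is the same as the paper's: first show that the weak $H^{m,p}$-limit is a rank-one tensor with $L^p$ factors, then upgrade each factor to $H^{m,p}$ using the $H^{m,p}$-regularity of the product. The differences lie in how the two sub-steps are carried out. For the first step, the paper proves weak closedness of $\Rc_1\left(\left._a\bigotimes_{j=1}^{d}L^{p}(\Omega_{j})\right.\right)$ in $L^p(\Omega)$ (Lemma \ref{lem:Wmp0}) by extracting weakly convergent subsequences of the individual factors and passing to the limit in $\mathcal{D}'(\Omega)$, whereas you invoke Proposition \ref{tr weakly closed} after checking $\norm[\cdot]_{L^p(\Omega)}\ge\left\Vert \cdot\right\Vert_{\vee}$ (a H\"older estimate); both are valid, and yours is arguably the shorter route given the machinery already set up in Section \ref{sec:tensor_product}. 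For the second step, the paper's Lemma \ref{lem:Wmp} reads off $\Vert\partial_{x_k}^{j}w_k\Vert_{L^p(\Omega_k)}<\infty$ from the factorization of $\norm[D^\alpha(\otimes_{l}w_l)]_{L^p(\Omega)}$, which tacitly assumes that the distributional derivative of the product factorizes into functions; your partial-pairing argument (fixing $\phi$ with $\int_{\Omega'}h\,\phi\neq 0$ and slicing) establishes exactly this point and is the more careful version of the same computation. Your opening observation --- that $\norm[\cdot]_{m,p}$ does not dominate the injective norm built from the full factor norms $\norm[\cdot]_{m,p;k}$, so Proposition \ref{tr weakly closed} cannot be applied directly --- is correct and is precisely why the paper states the $\mathcal{T}_{\mathbf{r}}$ analogue only for $p=2$ via a separate argument. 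One cosmetic remark: the closing appeal to metrizability of the weak topology on bounded sets is unnecessary, since the paper defines weak closedness sequentially.
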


To prove the above proposition we need the following two lemmas.

\begin{lemma}\label{lem:Wmp0}
Assume $1 < p < \infty$ and $\Omega =\Omega_1\times \hdots \times
\Omega_d.$ Then the set
$\Rc_1\left( \left._a \bigotimes_{j=1}^{d}L^{p}(\Omega_{j})\right.
\right)$ is weakly closed in $L^p(\Omega).$
\end{lemma}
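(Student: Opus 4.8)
The plan is to prove that $\Rc_1\left(\left._a\bigotimes_{j=1}^d L^p(\Omega_j)\right.\right)$, the set of rank-one tensors (elementary tensors) in $L^p(\Omega)$, is weakly closed. My strategy is to take a sequence of elementary tensors $\mathbf{u}_n = \otimes_{k=1}^d w_n^{(k)}$ that converges weakly to some limit $\mathbf{u}$ in $L^p(\Omega)$, and to show that $\mathbf{u}$ is again an elementary tensor. The first observation I would make is that since $1 < p < \infty$, the space $L^p(\Omega)$ is reflexive, so weakly convergent sequences are norm-bounded by the uniform boundedness principle; this gives $\norm[\mathbf{u}_n]_{L^p(\Omega)} = \prod_{k=1}^d \norm[w_n^{(k)}]_{L^p(\Omega_k)} \le C$.

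The key normalization step comes next. Because an elementary tensor is invariant under rescaling of its factors (multiplying one factor by $\lambda$ and another by $1/\lambda$), I can renormalize so that $\norm[w_n^{(k)}]_{L^p(\Omega_k)} = \norm[w_n^{(1)}]_{L^p(\Omega_1)}$ for $k = 2,\dots,d$, or more simply arrange that all but possibly one factor have unit norm. After passing to subsequences, each bounded sequence $(w_n^{(k)})$ in the reflexive space $L^p(\Omega_k)$ admits a weakly convergent subsequence $w_n^{(k)} \rightharpoonup w^{(k)}$. The heart of the argument is then to verify that the weak limit of the product equals the product of the weak limits, i.e. that $\otimes_{k=1}^d w_n^{(k)} \rightharpoonup \otimes_{k=1}^d w^{(k)}$, so that $\mathbf{u} = \otimes_{k=1}^d w^{(k)} \in \Rc_1$. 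This is where I expect the main obstacle to lie: weak convergence of each factor does not in general imply weak convergence of the tensor product, since products of weakly convergent sequences need not converge weakly (the tensor/multiplication map is bilinear, hence only separately, not jointly, weakly continuous).

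To overcome this, I would test against a dense class of functionals. The dual of $L^p(\Omega)$ is $L^{p'}(\Omega)$, and the elementary tensors $\otimes_{k=1}^d \varphi^{(k)}$ with $\varphi^{(k)} \in L^{p'}(\Omega_k)$ span a dense subspace of $L^{p'}(\Omega)$. For such a test functional, the pairing factorizes as $\langle \otimes_k w_n^{(k)}, \otimes_k \varphi^{(k)}\rangle = \prod_{k=1}^d \langle w_n^{(k)}, \varphi^{(k)}\rangle$, and each scalar factor converges by the weak convergence of $w_n^{(k)}$. Hence on the dense set the pairings converge to $\prod_k \langle w^{(k)}, \varphi^{(k)}\rangle = \langle \otimes_k w^{(k)}, \otimes_k \varphi^{(k)}\rangle$. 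Combined with the uniform norm bound on $\mathbf{u}_n$, a standard density and $3\varepsilon$ argument extends convergence of the pairings to all of $L^{p'}(\Omega)$, which identifies the weak limit as $\mathbf{u} = \otimes_{k=1}^d w^{(k)}$.

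One subtlety I would be careful about is the degenerate case where some factor escapes to zero: if after normalization a factor's weak limit is $w^{(k)} = 0$, the product is the zero tensor, which is still an elementary tensor (it lies in $\Rc_1$), so the conclusion holds. I should also confirm that the candidate limit factors indeed belong to the correct spaces $L^p(\Omega_k)$, which follows immediately since weak limits stay in the space. The whole argument passing to subsequences is legitimate because, to show $\mathbf{u}$ belongs to the weakly closed set, it suffices to identify the limit of any subsequence; since the original sequence already converges weakly to $\mathbf{u}$, the subsequential limit must coincide with $\mathbf{u}$, pinning down $\mathbf{u} = \otimes_{k=1}^d w^{(k)} \in \Rc_1\left(\left._a\bigotimes_{j=1}^d L^p(\Omega_j)\right.\right)$.
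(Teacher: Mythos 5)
Your argument is correct, and its skeleton (bound the sequence, renormalize the factors of each elementary tensor so that every factor sequence is bounded, extract weakly convergent subsequences $w_{n_j}^{(k)}\rightharpoonup w^{(k)}$ in the reflexive spaces $L^p(\Omega_k)$, then identify the limit) matches the paper's up to the final identification step, where the two proofs genuinely diverge. The paper passes from weak convergence in $L^p(\Omega_k)$ to convergence in $\mathcal{D}'(\Omega_k)$ and invokes a ready-made result (Proposition~6.2.3 of Blanchard--Br\"uning) on the sequential continuity of the tensor product of distributions to get $\otimes_k w_{n_j}^{(k)}\to\otimes_k w^{(k)}$ in $\mathcal{D}'(\Omega)$, and concludes by uniqueness of distributional limits. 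You instead stay entirely inside $L^p$--$L^{p'}$ duality: you test against the split functionals $\otimes_k\varphi^{(k)}$, on which the pairing factorizes by Fubini and H\"older into a finite product of convergent scalars, and then use density of their span in $L^{p'}(\Omega)$ to identify the limit. Note that once you know the whole sequence converges weakly to $\mathbf{u}$, agreement of $\langle\mathbf{u},\phi\rangle$ and $\langle\otimes_k w^{(k)},\phi\rangle$ on the dense subspace already forces $\mathbf{u}=\otimes_k w^{(k)}$, so the $3\varepsilon$ uniform-boundedness step is not even needed. Your route is self-contained and makes transparent exactly why the failure of joint weak continuity of the multilinear tensor map is harmless here; the paper's route is shorter at the price of importing a distribution-theoretic continuity theorem, and it extends verbatim to settings where the dual is less explicit. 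One point in your favor: you make the rescaling normalization explicit, which is genuinely needed --- boundedness of $\prod_k\Vert w_n^{(k)}\Vert_{L^p(\Omega_k)}$ does not by itself bound each factor --- whereas the paper asserts the boundedness of the factor sequences without comment.
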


\begin{proof}
Let $\{\mathbf{v}_n\}_{n \in \mathbb{N}},$ with $\mathbf{v}_n =
\otimes_{j=1}^d v_n^{(j)},$ be a sequence in $\Rc_1\left( \left._a
\bigotimes_{j=1}^{d}L^{p}(\Omega_{j})\right. \right)$ that weakly
converges to an element $\mathbf{v} \in L^p(\Omega).$ Then the
sequence $\{\mathbf{v}_n\}_{n \in \mathbb{N}}$ is bounded in
$L^p(\Omega),$ and also the sequences $\{v_n^{(j)}\}_{n  \in
\mathbb{N}}\subset L^p(\Omega_j)$ for each $j \in \{1,2,\ldots,d\}.$
Then, for each $j \in \{1,2,\ldots,d\},$ we can extract a
subsequence, namely $\{v_{n_k}^{(j)}\}_{k \in \mathbb{N}},$ that
weakly converges to some $v^{(j)} \in L^p(\Omega_j).$ Since weak
convergence in $L^p(\Omega_j)$ implies the convergence in
distributional sense, that is, the subsequence $\{v_{n_k}^{(j)}\}_{k
\in \mathbb{N}}$ converges to $v^{(j)}$ in $\mathcal{D}'(\Omega_j).$
From Proposition~6.2.3 of \cite{Blanchard2003}, we have that
$\{\otimes_{j=1}^d v_{n_k}^{(j)}\}_{k \in \mathbb{N}}$ converges to
$\otimes_{j=1}^d v^{(j)}$ in $\mathcal{D}'(\Omega).$ By uniqueness
of the limit, we obtain the desired result. \qed \end{proof}

\begin{lemma}\label{lem:Wmp}
Assume $1<p<\infty,$ $m \ge 1$ and $\Omega =\Omega_1\times \hdots \times
\Omega_d.$ For any measurable functions $w_k:\Omega_k\rightarrow \Rbb$ such
that $\otimes_{k=1}^d w_k \neq \mathbf{0},$ we have $\otimes_{k=1}^d
w_k \in H^{m,p}(\Omega)$ if and only if $w_k\in H^{m,p}(\Omega_k)$
for all $k\in\{1\hdots d\}$.
\end{lemma}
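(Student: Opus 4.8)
The plan is to treat the two implications separately, with the forward (``if'') direction being essentially a computation and the reverse (``only if'') direction carrying the real content. For the forward implication, suppose each $w_k \in H^{m,p}(\Omega_k)$. Since the variables are separated, any partial derivative $D^\alpha$ with $|\alpha| \le m$ acts on $\otimes_{k=1}^d w_k$ factorwise, giving $D^\alpha(\otimes_{k=1}^d w_k) = \otimes_{k=1}^d \partial_{x_k}^{\alpha_k} w_k$ in the weak sense (which one justifies by the same separation-of-variables testing used below); each factor lies in $L^p(\Omega_k)$ because $\alpha_k \le |\alpha| \le m$, and by Fubini its $L^p(\Omega)$ norm factorizes as $\prod_{k} \Vert\partial_{x_k}^{\alpha_k} w_k\Vert_{L^p(\Omega_k)} < \infty$. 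Hence $\otimes_{k=1}^d w_k \in H^{m,p}(\Omega)$.

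For the reverse implication, assume $\mathbf{w} := \otimes_{k=1}^d w_k \in H^{m,p}(\Omega)$ with $\mathbf{w} \neq \mathbf{0}$. First I would record that each $w_k \in L^p(\Omega_k)$: by Fubini $\prod_{k} \Vert w_k\Vert_{L^p(\Omega_k)}^p = \Vert\mathbf{w}\Vert_{L^p(\Omega)}^p < \infty$, and since $\mathbf{w} \neq \mathbf{0}$ no factor vanishes, so every $\Vert w_k\Vert_{L^p(\Omega_k)}$ is finite and nonzero. In particular no $w_k$ is the zero element of $L^p(\Omega_k)$, so using $1 < p < \infty$ and the density of $C_c^\infty(\Omega_l)$ in $L^{p'}(\Omega_l)$, for each $l$ I can fix a test function $\phi_l \in C_c^\infty(\Omega_l)$ with $\int_{\Omega_l} w_l \phi_l \, dx_l \neq 0$.

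The heart of the argument is to recover the one-dimensional weak derivatives of $w_k$. Fix $k$ and an order $j \le m$, set $\Omega_{\hat k} := \prod_{l\neq k}\Omega_l$, and let $g := D^{j\mathbf{e}_k}\mathbf{w} \in L^p(\Omega)$, which exists since $|j\mathbf{e}_k| = j \le m$. Testing the distributional identity $\int_\Omega \mathbf{w}\, \partial_{x_k}^j \Phi = (-1)^j \int_\Omega g\, \Phi$ against product test functions $\Phi = \phi_k \otimes \bigotimes_{l\neq k}\phi_l$, with the $\phi_l$ ($l \neq k$) fixed as above and $\phi_k \in C_c^\infty(\Omega_k)$ arbitrary, and using Fubini to integrate out the variables $x_l$, $l \neq k$, I obtain $c \int_{\Omega_k} w_k \, \partial_{x_k}^j \phi_k \, dx_k = (-1)^j \int_{\Omega_k} \tilde g\, \phi_k\, dx_k$, where $c := \prod_{l\neq k}\int_{\Omega_l} w_l \phi_l \, dx_l \neq 0$ and $\tilde g(x_k) := \int_{\Omega_{\hat k}} g(x) \prod_{l\neq k}\phi_l(x_l)\, dx_{\hat k}$. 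This identifies the weak derivative $\partial_{x_k}^j w_k = c^{-1}\tilde g$.

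It then remains to check $\tilde g \in L^p(\Omega_k)$, which is where I expect the only genuine care to be needed: by H\"older in the variables $x_l$ ($l\neq k$) one bounds $|\tilde g(x_k)| \le \Vert g(x_k,\cdot)\Vert_{L^p(\Omega_{\hat k})}\,\Vert\otimes_{l\neq k}\phi_l\Vert_{L^{p'}(\Omega_{\hat k})}$, and integrating the $p$-th power in $x_k$ and applying Fubini gives $\Vert\tilde g\Vert_{L^p(\Omega_k)} \le \Vert\otimes_{l\neq k}\phi_l\Vert_{L^{p'}(\Omega_{\hat k})}\,\Vert g\Vert_{L^p(\Omega)} < \infty$. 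Since this holds for every $j \in \{0,\ldots,m\}$ (the case $j=0$ being the $L^p$ membership already noted), and in one variable $H^{m,p}(\Omega_k)$ requires exactly the derivatives $\partial_{x_k}^j w_k$ for $0 \le j \le m$, I conclude $w_k \in H^{m,p}(\Omega_k)$ for every $k$, as desired.
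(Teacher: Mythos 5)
Your proof is correct, and it follows the same overall skeleton as the paper's: the forward direction by factorizing the mixed derivatives and their $L^p$ norms via Fubini, and the reverse direction by first taking $\alpha=(0,\ldots,0)$ to get $w_k\in L^p(\Omega_k)$ (using $\otimes_k w_k\neq\mathbf{0}$ to rule out a vanishing factor) and then the multi-indices $\alpha=j\mathbf{e}_k$ to control the one-dimensional derivatives. The one genuine difference is in how the reverse direction is executed. The paper simply writes the identity $\Vert D^{\alpha}(\otimes_{l}w_l)\Vert_{L^p(\Omega)}=\Vert\partial_{x_k}^{j}w_k\Vert_{L^p(\Omega_k)}\prod_{l\neq k}\Vert w_l\Vert_{L^p(\Omega_l)}$, which tacitly presupposes that the distributional derivative $D^{j\mathbf{e}_k}(\otimes_l w_l)$ is again the elementary tensor $\partial_{x_k}^{j}w_k\otimes\bigotimes_{l\neq k}w_l$ --- i.e.\ precisely that $\partial_{x_k}^{j}w_k$ exists as a function, which is what is to be proved. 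Your argument supplies this missing step: by pairing against product test functions with the off-factors $\phi_l$ chosen so that $\int_{\Omega_l}w_l\phi_l\neq 0$, you exhibit $\partial_{x_k}^{j}w_k$ explicitly as $c^{-1}\tilde g$ with $\tilde g$ a partial integral of $D^{j\mathbf{e}_k}(\otimes_l w_l)$, and the H\"older--Fubini estimate puts it in $L^p(\Omega_k)$. So your version is the more careful one; what it costs is a small amount of bookkeeping (the normalizing constant $c$ and the integrability of $\tilde g$), and what it buys is that the factorization identity the paper asserts becomes a consequence rather than a hypothesis. The only point where you lean on an unproved claim is the weak Leibniz-type identity $D^\alpha(\otimes_k w_k)=\otimes_k\partial_{x_k}^{\alpha_k}w_k$ in the forward direction, which you flag and which the paper also uses without comment; it reduces to the density of spans of product test functions in $\mathcal{D}(\Omega)$, as in the reference the paper invokes for Lemma~\ref{lem:Wmp0}.
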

\begin{proof}
Suppose that $w_k\in H^{m,p}(\Omega_k)$ for all $k\in\{1\hdots d\}$.
Since
\begin{align*}
\norm[\otimes_{k=1}^d w_k]_{m,p} &=  \sum_{0\le \vert \alpha \vert \le m}
\prod_{k=1}^d \Vert \partial_{x_k}^{\alpha_k}(w_k)
\Vert_{L^p(\Omega_k)}
\\
&\le\sum_{\alpha\in\{0,\hdots,m\}^d } \prod_{k=1}^d \Vert
\partial_{x_k}^{\alpha_k}(w_k) \Vert_{L^p(\Omega_k)}\\
&= \prod_{k=1}^d \left( \sum_{j=0}^m \Vert
\partial_{x_k}^{j}(w_k) \Vert_{L^p(\Omega_k)}\right)= \prod_{k=1}^d \norm[
w_k]_{m,p;k},
\end{align*}
we have
$\otimes_{k=1}^d w_k \in H^{m,p}(\Omega)$.

Conversely, if
$\otimes_{k=1}^d w_k \in H^{m,p}(\Omega)$, then
\begin{align*}
\norm[\otimes_{k=1}^d w_k]_{m,p}  =& \sum_{0\le  \vert\alpha\vert \le m}
\norm[D^\alpha(\otimes_{k=1}^d w_k)]_{L^p(\Omega)} <\infty
\end{align*}
which implies that $\norm[D^\alpha(\otimes_{k=1}^d
w_k)]_{L^p(\Omega)}<\infty $ for all $\alpha$ such that $0\le
\vert\alpha\vert \le m$. Taking $\alpha=(0,\hdots,0)$, we obtain
$$
\norm[\otimes_{k=1}^d w_k]_{L^p(\Omega)} = \prod_{k=1}^d
\norm[w_k]_{L^p(\Omega_k)} <\infty
$$
and therefore $\norm[w_k]_{L^p(\Omega_k)}<\infty$ for all $k$. Now,
for $k\in\{1\hdots d\}$, taking $\alpha = (\hdots,0,j,0,\hdots)$
such that $\alpha_k=j$, with $1\le j\le m$, and $\alpha_l=0$ for
$l\neq k$, we obtain
$$
\norm[D^{\alpha}(\otimes_{l=1}^d w_l)]_{L^p(\Omega)} = \Vert
\partial^j_{x_k} w_k\Vert_{L^p(\Omega_k)} \prod_{l\neq k}
\norm[w_l]_{L^p(\Omega_l)}
$$
and then $\norm[\partial_{x_k}^j w_k]_{p,\Omega_k}<\infty$ for all
$j\in\{1,\hdots,m\}$. Therefore $w_k\in H^{m,p}(\Omega_k)$ for all
$k\in\{1\hdots d\}$.
\qed \end{proof}

\noindent \emph{Proof of Proposition~\ref{prop:S1W1p_closed} }
For $m=0$ the proposition follows from Lemma~\ref{lem:Wmp0}. Now,
assume $m \ge 1,$ and let us consider a sequence
$$
\{\mathbf{z}_n\}_{n \in \mathbb{N}} \subset \Rc_1\left( \left._a
\bigotimes_{j=1}^{d}H^{m,p}(\Omega_{j})\right. \right)
$$
that weakly converges
to an element $\mathbf{z}\in H^{m,p}(\Omega)$. Since
$$
\Rc_1\left( \left._a \bigotimes_{j=1}^{d}H^{m,p}(\Omega_{j})\right.
\right) \subset \Rc_1\left( \left._a
\bigotimes_{j=1}^{d}L^{p}(\Omega_{j})\right. \right),$$ we have
$\mathbf{z} \in \Rc_1\left( \left._a
\bigotimes_{j=1}^{d}L^{p}(\Omega_{j})\right. \right)$ because, from
Lemma~\ref{lem:Wmp0}, the latter set is weakly closed in
$L^p(\Omega)$. Therefore, there exist $w_k \in L^p(\Omega_k)$ such
that $\mathbf{z} = \otimes_{k=1}^d w_k$. Since $\mathbf{z} \in
H^{m,p}(\Omega)$, from Lemma \ref{lem:Wmp}, $w_k\in
H^{m,p}(\Omega_k)$ for $1 \le k \le d,$ and therefore $\mathbf{z} =
\otimes_{k=1}^d w_k \in \Rc_1\left( \left._a
\bigotimes_{j=1}^{d}H^{m,p}(\Omega_{j})\right. \right)$. \qed

From \cite{FalcoHackbusch} it follows the following statement.

\begin{proposition}
The set $\mathcal{T}_{\mathbf{r}}\left(\left._{a}%
\bigotimes_{j=1}^{d}H^{m,2}(\Omega_{j})\right.\right)$ is weakly
closed in $H^{m,2}(\Omega).$
\end{proposition}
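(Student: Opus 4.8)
The plan is to deduce this Hilbert-space statement from the general machinery already in place, namely Proposition~\ref{tr weakly closed}, which asserts that $\mathcal{T}_{\mathbf{r}}(\mathbf{V})$ is weakly closed whenever the tensor norm dominates $\norm[\cdot]_{\vee}$. The case $p=2$ is singled out here precisely because $H^{m,2}(\Omega)$ is a Hilbert tensor space, so the cited work \cite{FalcoHackbusch} should supply the required norm comparison directly. Concretely, I would first invoke the identification
\[
H^{m,2}(\Omega)=\left._{\norm[\cdot]_{m,2}}\bigotimes_{j=1}^{d}H^{m,2}(\Omega_{j})\right.,
\]
established just above, which exhibits $H^{m,2}(\Omega)$ as the Banach (indeed Hilbert) tensor space $\mathbf{V}_{\norm[\cdot]_{m,2}}$ with $V_j=H^{m,2}(\Omega_j)$.

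The key step is to verify the hypothesis of Proposition~\ref{tr weakly closed}, that is, to show $\norm[\cdot]_{m,2}\gtrsim\norm[\cdot]_{\vee}$ on the algebraic tensor space $\mathbf{V}=\left._a\bigotimes_{j=1}^{d}H^{m,2}(\Omega_{j})\right.$. I expect this to follow from the Hilbert structure: on a Hilbert tensor space equipped with the induced scalar product, the natural norm is a reasonable crossnorm, and every reasonable crossnorm dominates $\norm[\cdot]_{\vee}$ (which is the injective/least reasonable crossnorm). This is the content attributed to \cite{FalcoHackbusch}, so I would cite it for the inequality rather than reprove it. Once the domination is in hand, Proposition~\ref{tr weakly closed} applies verbatim to give that $\mathcal{T}_{\mathbf{r}}(\mathbf{V})$ is weakly closed in $\mathbf{V}_{\norm[\cdot]_{m,2}}=H^{m,2}(\Omega)$, which is exactly the claim.

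The main obstacle, and the only genuinely nonroutine point, is establishing the norm comparison $\norm[\cdot]_{m,2}\gtrsim\norm[\cdot]_{\vee}$. The subtlety is that $\norm[\cdot]_{m,2}$ is \emph{not} itself a crossnorm in the strict sense of~(\ref{(rcn a})—on an elementary tensor it produces a sum over multi-indices $\alpha$ of products of one-dimensional Sobolev-type factors, not a single product of the $\norm[\cdot]_{m,2;k}$. Hence one cannot simply quote the elementary ``reasonable crossnorm dominates $\norm[\cdot]_{\vee}$'' principle; one must instead compare the Sobolev norm with the induced Hilbert crossnorm arising from the scalar product on $\mathbf{V}$. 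Since both norms are Hilbertian and generate the same tensor Hilbert space $H^{m,2}(\Omega)$, they are equivalent, and the induced crossnorm dominates $\norm[\cdot]_{\vee}$; chaining these two facts yields the required estimate. This equivalence of Hilbertian tensor norms is where I would lean most heavily on the analysis in \cite{FalcoHackbusch}, which is why the authors state the proposition as a direct consequence of that reference.
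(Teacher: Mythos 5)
The paper offers no argument here at all---the proposition is quoted directly from \cite{FalcoHackbusch}---so your proposal has to stand on its own, and unfortunately its central step is false. The induced Hilbert crossnorm on $\left._a\bigotimes_{j=1}^d H^{m,2}(\Omega_j)\right.$ (the one coming from the tensor product of the scalar products of the factors) is \emph{not} equivalent to $\Vert\cdot\Vert_{m,2}$, and its completion is not $H^{m,2}(\Omega)$. Already for $d=2$, $m=1$: on an elementary tensor $u\otimes v$ the induced crossnorm squared is $(\Vert u\Vert^2+\Vert u'\Vert^2)(\Vert v\Vert^2+\Vert v'\Vert^2)$, which contains the mixed term $\Vert u'\Vert^2\,\Vert v'\Vert^2=\Vert\partial_x\partial_y(u\otimes v)\Vert_{L^2}^2$, absent from $\Vert u\otimes v\Vert_{1,2}$; taking $\Vert u\Vert=\Vert v\Vert=1$ and $\Vert u'\Vert=\Vert v'\Vert=N\to\infty$ makes the ratio of the two norms blow up. The completion under the induced crossnorm is the Sobolev space of dominating mixed smoothness, a strict subspace of $H^{m,2}(\Omega)$. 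The same example also kills the direct application of Proposition~\ref{tr weakly closed} with $V_j=H^{m,2}(\Omega_j)$: since $\Vert\cdot\Vert_{\vee}$ is a crossnorm, $\Vert u\otimes v\Vert_{\vee}=(1+N)^2$ while $\Vert u\otimes v\Vert_{1,2}=1+2N$, so the hypothesis $\Vert\cdot\Vert_{m,2}\gtrsim\Vert\cdot\Vert_{\vee}$ fails when $\Vert\cdot\Vert_{\vee}$ is built from the duals of the spaces $(H^{m,2}(\Omega_j),\Vert\cdot\Vert_{m,2;j})$.

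What \cite{FalcoHackbusch} actually supplies is a different mechanism, the one the paper itself uses in the rank-one case (Proposition~\ref{prop:S1W1p_closed}): pass to a weaker topology. A sequence in $\mathcal{T}_{\mathbf{r}}$ converging weakly in $H^{m,2}(\Omega)$ also converges weakly in $L^2(\Omega)$, and on $\left._a\bigotimes_{j=1}^d L^2(\Omega_j)\right.$ the norm of $L^2(\Omega)$ \emph{does} dominate the corresponding injective norm (it is a reasonable crossnorm there), so weak closedness of $\mathcal{T}_{\mathbf{r}}$ holds at the $L^2$ level and places the weak limit in $\mathcal{T}_{\mathbf{r}}\left(\left._a\bigotimes_{j=1}^d L^2(\Omega_j)\right.\right)$. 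One then needs the Tucker-format analogue of Lemma~\ref{lem:Wmp}: the minimal subspaces $U_{j,\min}(\mathbf{v})$ of the limit, a priori only subspaces of $L^2(\Omega_j)$, in fact lie in $H^{m,2}(\Omega_j)$ because $\mathbf{v}\in H^{m,2}(\Omega)$. That regularity upgrade of the minimal subspaces is the substantive content of the citation, and it is the step your argument does not reach.
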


\section{Optimization of functionals over Banach spaces}\label{sec:problem}

Let $V$ be a reflexive Banach space, endowed with a norm $\Vert\cdot
\Vert$. We denote by $V^\ast$ the dual space of $V$ and we denote by
$\langle \cdot,\cdot \rangle:V^\ast\times V\rightarrow \Rbb$ the
duality pairing. We consider the optimization problem
\begin{align}
J(u) = \min_{v\in V} J(v)\label{eq:prob}\tag{$\pi$}
\end{align}
where $J:V\rightarrow \Rbb$ is a given functional.

\subsection{Some useful results on minimization of functionals over Banach spaces}

In the sequel, we will introduce approximations of \eqref{eq:prob}
by considering an optimization on subsets $M\subset V$, i.e.
\begin{align}
\inf_{v\in M} J(v)\label{eq:inf_problem}
\end{align}
We here recall classical theorems for the existence of a minimizer
(see e.g. \cite{EKE99}).

We recall that a sequence $v_{m}\in V$ is \emph{weakly convergent}
if $\lim_{m\rightarrow\infty}\langle \varphi,  v_{m}  \rangle$
exists for all $\varphi\in V^{\ast}.$ We say that $\left(
v_{m}\right) _{m\in\mathbb{N}}$ \emph{converges weakly to} $v\in V$
if $\lim_{m\rightarrow\infty} \langle \varphi, v_{m} \rangle
=\langle \varphi, v\rangle$ for all $\varphi\in V^{\ast}$. In this
case, we write $v_{m}\rightharpoonup v$.

\begin{definition}
A subset $M\subset V$ is called \emph{weakly closed} if $v_{m}\in M$
and $v_{m}\rightharpoonup v$ implies $v\in M$.
\end{definition}

Note that `weakly closed' is stronger than `closed', i.e., $M$
weakly closed $\Rightarrow$ $M$ closed.

\begin{definition}
We say that a map $J:V \longrightarrow \mathbb{R}$ is weakly
sequentially lower semicontinuous (respectively,  weakly
sequentially continuous) in $M \subset V$ if for all $v \in M$ and
for all $v_{m}\in M$ such that $v_{m}\rightharpoonup v$, it holds
$J(v) \le \lim \inf_{m\rightarrow \infty} J(v_m)$ (respectively,
$J(v) = \lim_{m \rightarrow \infty} J(v_m)).$
\end{definition}

If $J':V \longrightarrow V^\ast$ exists as Gateaux derivative, we
say that $J'$ is strongly continuous when for any sequence $v_n
\rightharpoonup v$ in $V$ it holds that $J'(v_n) \rightarrow J'(v)$
in $V^\ast.$

Recall that the convergence in norm implies the weak convergence.
Thus, $J$ weakly sequentially (lower semi)continuous in $M$
$\Rightarrow$ $J$ (lower semi)continuous in $M.$ It can be shown
(see Proposition 41.8 and Corollary 41.9 in \cite{Zeidler}) the
following result.

\begin{proposition}\label{Zeid}
Let $V$ be a reflexive Banach space and let $J:V \rightarrow \Rbb$
be a functional, then the following statements hold.
\begin{itemize}
\item[(a)] If $J$ is a convex and lower semicontinuous functional, then $J$ is
weakly sequentially lower semicontinuous.
\item[(b)] If $J':V \longrightarrow V^\ast$ exists on $V$ as Gateaux derivative
and is strongly continuous (or compact), then $J$ is weakly sequentially continuous.
\end{itemize}
\end{proposition}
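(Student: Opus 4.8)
The plan is to prove the two parts by different classical routes, since (a) is a separation/convexity argument and (b) is a mean value argument.

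For part (a), I would reduce weak sequential lower semicontinuity to a property of sublevel sets. The one genuinely non-elementary ingredient is Mazur's lemma: in a normed space a \emph{convex} set is closed if and only if it is weakly closed, which rests on the Hahn--Banach separation theorem. Granting this, let $v_m \rightharpoonup v$ and set $\alpha = \liminf_{m\to\infty} J(v_m)$; if $\alpha = +\infty$ the claim is trivial since $J$ is real-valued, so assume $\alpha$ finite and pass to a subsequence realizing the $\liminf$. For every $\varepsilon > 0$ the tail of this subsequence eventually lies in the sublevel set $C_{\alpha+\varepsilon} = \{w \in V : J(w) \le \alpha + \varepsilon\}$, which is convex by convexity of $J$ and closed by lower semicontinuity of $J$, hence weakly closed by Mazur's lemma. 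Since the subsequence still converges weakly to $v$ and is eventually contained in $C_{\alpha+\varepsilon}$, weak closedness gives $v \in C_{\alpha+\varepsilon}$, i.e. $J(v) \le \alpha + \varepsilon$; letting $\varepsilon \to 0$ yields $J(v) \le \liminf_m J(v_m)$.

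For part (b), I would use a mean value argument exploiting strong continuity of $J'$. Fix $v_n \rightharpoonup v$ and consider the real function $\phi_n(t) := J\bigl(v + t(v_n - v)\bigr)$ on $[0,1]$. Gateaux differentiability makes $\phi_n$ differentiable with $\phi_n'(t) = \langle J'(v + t(v_n - v)), v_n - v\rangle$, so the classical mean value theorem furnishes $t_n \in (0,1)$ with
\[
J(v_n) - J(v) = \langle J'(w_n),\, v_n - v\rangle, \qquad w_n := v + t_n(v_n - v).
\]
I would then split the right-hand side as $\langle J'(w_n) - J'(v),\, v_n - v\rangle + \langle J'(v),\, v_n - v\rangle$. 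The second term tends to $0$ directly from the definition of weak convergence, as $J'(v)$ is a fixed element of $V^\ast$. For the first, note that $t_n \in (0,1)$ together with $v_n - v \rightharpoonup 0$ gives $w_n \rightharpoonup v$, so strong continuity of $J'$ yields $\|J'(w_n) - J'(v)\|^\ast \to 0$, while $\|v_n - v\|$ stays bounded because weakly convergent sequences are norm-bounded (uniform boundedness principle). The duality estimate
\[
|\langle J'(w_n) - J'(v),\, v_n - v\rangle| \le \|J'(w_n) - J'(v)\|^\ast \, \|v_n - v\|
\]
then forces this term to $0$ as well, whence $J(v_n) \to J(v)$.

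I expect the main obstacle to be the justification in part (a) that convex closed sets are weakly closed (Mazur's lemma), since this is the only step requiring a genuine separation argument via Hahn--Banach; everything else in (a), and all of (b), is routine once the mean value theorem and the norm-boundedness of weakly convergent sequences are in hand.
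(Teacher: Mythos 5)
The paper does not actually prove this proposition: it defers to Proposition 41.8 and Corollary 41.9 of Zeidler's book, so there is no in-paper argument to compare against. Your proof is the standard one found there and is correct: Mazur's lemma (closed convex sets are weakly closed) applied to sublevel sets for part (a), and the mean value theorem along the segment $[v, v_n]$ combined with strong continuity of $J'$ and norm-boundedness of weakly convergent sequences for part (b). Two small loose ends worth patching: in (a) you dismiss $\alpha=+\infty$ but not $\alpha=-\infty$, where the desired inequality $J(v)\le\alpha$ would be vacuously false for a real-valued $J$; this case is excluded because a proper convex lower semicontinuous functional admits a continuous affine minorant and is therefore bounded below on the bounded set $\{v_m\}_{m}$. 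Also, you do not treat the parenthetical alternative hypothesis that $J'$ is compact, though that variant is not used elsewhere in the paper.
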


Finally, we have the following two useful theorems.

\begin{theorem}\label{th:bounded_closed}
Assume $V$ is a reflexive Banach space, and assume $M\subset V$ is
bounded and weakly closed. If $J:M \rightarrow \Rbb \cup\{\infty\}$
is weakly sequentially lower semicontinuous, then problem
\eqref{eq:inf_problem} has a solution.
\end{theorem}
\begin{proof}
Let $\alpha = \inf_{v\in A} J(v)$ and $\{v_n\}\subset A$ be a
minimizing sequence. Since $A$ is bounded, $\{v_n\}_{n\in\Nbb}$ is a
bounded sequence in a reflexive Banach space and therefore, there
exists a subsequence $\{v_{n_k}\}_{k\in\Nbb}$ that converges weakly
to an element $u\in V$. Since $A$ is weakly closed, $u\in A$ and
since $J$ is weakly sequentially lower semicontinuous, $J(u)\le
\liminf_{k\to\infty } J(v_{n_k}) = \alpha$. Therefore, $J(u)=\alpha$
and $u$ is solution of the minimization problem.
\qed \end{proof}
We now remove the assumption that $M$ is bounded by adding a
coercivity condition on $J$.
\begin{theorem}\label{th:closed_coercive}
Assume $V$ is a reflexive Banach space, and $M\subset V$ is weakly
closed. If $J:M\rightarrow \Rbb \cup\{\infty\}$ is weakly
sequentially lower semicontinuous and coercive on $M$, then problem
\eqref{eq:inf_problem} has a solution.
\end{theorem}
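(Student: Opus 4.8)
The plan is to reduce the unbounded problem to the situation already handled in Theorem~\ref{th:bounded_closed}, using coercivity to confine every minimizing sequence to a bounded set. Write $\alpha = \inf_{v\in M} J(v)$. If $J\equiv\infty$ on $M$ the statement is vacuous, so I would assume $\alpha<\infty$ and fix a minimizing sequence $\{v_n\}\subset M$ with $J(v_n)\to\alpha$.

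The one genuinely load-bearing step — and the only place the coercivity hypothesis enters — is to show that $\{v_n\}$ is bounded. Since $J(v_n)\to\alpha<\infty$, there is an $N$ with $J(v_n)\le\alpha+1$ for all $n\ge N$; coercivity of $J$ on $M$ (that is, $J(v)\to\infty$ whenever $\Vert v\Vert\to\infty$ with $v\in M$) forces the sublevel set $\{v\in M: J(v)\le\alpha+1\}$ to be bounded, so the tail of $\{v_n\}$ lies inside this bounded set. This is exactly the ingredient that was supplied for free in Theorem~\ref{th:bounded_closed} by the standing assumption that $M$ is bounded; everything downstream is then a verbatim reuse of that argument.

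Concretely, I would introduce $M_0 = \{v\in M: J(v)\le\alpha+1\}$ and check that $M_0$ is bounded and weakly closed, so that Theorem~\ref{th:bounded_closed} applies to the restricted problem $\inf_{v\in M_0} J(v)$. Boundedness is the coercivity consequence just noted. For weak closedness, take $w_k\in M_0$ with $w_k\rightharpoonup w$: since $M$ is weakly closed we get $w\in M$, and since $J$ is weakly sequentially lower semicontinuous we get $J(w)\le\liminf_{k\to\infty} J(w_k)\le\alpha+1$, hence $w\in M_0$.

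Finally, Theorem~\ref{th:bounded_closed} yields a minimizer $u\in M_0$ of $J$ over $M_0$. Because the tail of the minimizing sequence lies in $M_0$ one has $\inf_{M_0} J = \alpha = \inf_{M} J$, so $J(u)=\alpha$ and $u$ solves \eqref{eq:inf_problem}. I do not expect any real obstacle beyond the boundedness reduction: reflexivity supplies a weakly convergent subsequence, weak closedness keeps the limit in the feasible set, and weak sequential lower semicontinuity gives $J(u)\le\alpha$, which combined with $u\in M$ forces equality.
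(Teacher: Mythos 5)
Your argument is correct and follows essentially the same route as the paper: reduce to Theorem~\ref{th:bounded_closed} by restricting to a sublevel set that is bounded by coercivity and weakly closed by the weak closedness of $M$ together with weak sequential lower semicontinuity of $J$ (the paper uses the sublevel set $\{v\in M: J(v)\le J(v_0)\}$ for some $v_0$ with $J(v_0)\neq\infty$ rather than your $\{v\in M: J(v)\le\alpha+1\}$, an immaterial difference). No gaps.
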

\begin{proof}
Pick an element $v_0 \in M$ such that $J(v_0)\neq \infty$ and define
$M_0=\{v\in M : J(v)\le J(v_0)\}$. Since $J$ is coercive, $M_0$ is
bounded. Since $M$ is weakly closed and $J$ is weakly sequentially
lower semicontinuous, $M_0$ is weakly closed. The initial problem is
then equivalent to $\inf_{v\in M_0} J(v)$, which admits a solution
from Theorem \ref{th:bounded_closed}.
\qed \end{proof}

\subsection{Convex optimization in Banach spaces}

From now one, we will assume that the functional $J$ satisfies the
following assumptions.
\begin{itemize}
\item[(A1)] $J$ is Fr\'echet differentiable, with Fr\'echet
differential $J':V\rightarrow V^\ast$.
\item[(A2)] $J$ is elliptic, i.e. there exist $\alpha>0$ and $s>1$
such that for all $v,w\in V$;
\begin{align}
\langle J'(v)-J'(w),v-w\rangle \;\ge\; \alpha \Vert
v-w\Vert^s\label{eq:J_strong_convexity}
\end{align}
\end{itemize}
In the following, $s$ will be called the ellipticity exponent of
$J$.
\begin{lemma}\label{lemma_assumption}
Under assumptions (A1)-(A2), we have
\begin{itemize}
\item[(a)] For all $v,w\in V,$
\begin{align}
J(v)-J(w) \ge \langle J'(w),v-w\rangle  +  \frac{\alpha}{s} \Vert
v-w\Vert^s. \label{eq:J_strong_convexity_consequence}
\end{align}
\item[(b)] $J$ is strictly convex.
\item[(c)] $J$ is bounded from below and coercive, i.e. $\lim_{\Vert
v\Vert\rightarrow\infty} J(v)=+\infty.$
\end{itemize}
\end{lemma}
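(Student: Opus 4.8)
I want to prove the three claims of Lemma \ref{lemma_assumption} from assumptions (A1)--(A2). The natural engine for all three is the fundamental theorem of calculus applied along the segment joining two points: for $v,w \in V$, Fr\'echet differentiability (A1) gives
\[
J(v) - J(w) = \int_0^1 \langle J'(w + t(v-w)), v-w\rangle \, dt.
\]
This single identity, combined with the ellipticity estimate \eqref{eq:J_strong_convexity}, will yield part (a) directly, and parts (b) and (c) will follow as consequences of (a).

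\medskip
\noindent\textbf{Part (a).} Starting from the integral identity above, I would subtract $\langle J'(w), v-w\rangle = \int_0^1 \langle J'(w), v-w\rangle\, dt$ from both sides to obtain
\[
J(v) - J(w) - \langle J'(w), v-w\rangle = \int_0^1 \langle J'(w+t(v-w)) - J'(w), v-w\rangle \, dt.
\]
Now I apply (A2) with the pair $(w + t(v-w), w)$: the difference of arguments is $t(v-w)$, so \eqref{eq:J_strong_convexity} gives $\langle J'(w+t(v-w)) - J'(w), t(v-w)\rangle \ge \alpha \, t^s \|v-w\|^s$. Dividing by $t>0$ shows the integrand is bounded below by $\alpha\, t^{s-1}\|v-w\|^s$, and integrating $\int_0^1 t^{s-1}\,dt = 1/s$ yields exactly the claimed bound \eqref{eq:J_strong_convexity_consequence}. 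The only mild care needed is to justify interchanging the lower bound with the integral, which is immediate since the pointwise inequality holds for every $t\in(0,1]$.

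\medskip
\noindent\textbf{Parts (b) and (c).} For strict convexity, fix $v\neq w$ and $\lambda\in(0,1)$, set $x_\lambda = \lambda v + (1-\lambda)w$, and apply part (a) twice, bounding both $J(v)$ and $J(w)$ from below by their tangent expansions at $x_\lambda$. Taking the convex combination $\lambda[\,\cdot\,] + (1-\lambda)[\,\cdot\,]$, the linear terms $\langle J'(x_\lambda), \cdot\rangle$ telescope to zero because $\lambda(v - x_\lambda) + (1-\lambda)(w - x_\lambda) = 0$, leaving $\lambda J(v) + (1-\lambda) J(w) - J(x_\lambda) \ge \frac{\alpha}{s}\big(\lambda\|v-x_\lambda\|^s + (1-\lambda)\|w-x_\lambda\|^s\big) > 0$, which is strict since $v\neq w$. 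For coercivity, I fix any reference point $w$ in part (a) and use the lower bound to get $J(v) \ge J(w) + \langle J'(w), v-w\rangle + \frac{\alpha}{s}\|v-w\|^s$. Estimating the linear term by $|\langle J'(w), v-w\rangle| \le \|J'(w)\|^* \|v-w\|$ via the duality pairing, the dominant term $\frac{\alpha}{s}\|v-w\|^s$ with $s>1$ overwhelms the linear one as $\|v\|\to\infty$, giving both boundedness from below and $J(v)\to+\infty$.

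\medskip
\noindent The main obstacle, such as it is, is purely technical: verifying that the scalar map $t \mapsto \langle J'(w+t(v-w)), v-w\rangle$ is continuous (hence integrable) so that the fundamental theorem of calculus applies. This follows from (A1), since Fr\'echet differentiability makes $t\mapsto J'(w+t(v-w))$ continuous into $V^\ast$ and pairing against the fixed vector $v-w$ preserves continuity. Everything downstream is then elementary, and I expect no genuine difficulty beyond organizing these steps cleanly.
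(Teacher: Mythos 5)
Your proof is correct and follows essentially the same route as the paper: part (a) via the fundamental theorem of calculus along the segment combined with the ellipticity bound \eqref{eq:J_strong_convexity} applied to the pair $(w+t(v-w),w)$, and part (c) by fixing a reference point in (a) and letting the term $\frac{\alpha}{s}\Vert v-w\Vert^s$ dominate the linear one. The only cosmetic difference is in (b), where the paper simply invokes the first-order characterization $J(v)-J(w) > \langle J'(w),v-w\rangle$ for $v\neq w$, while you verify strict convexity directly from the definition by applying (a) at the convex combination point; both arguments are standard and equivalent.
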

\begin{proof}
\begin{itemize}
\item[(a)] For all $v,w\in V$,
\begin{align*} J(v)-J(w) &= \int_{0}^1 \frac{d}{dt}J(w+t(v-w))
dt = \int_{0}^1
\langle J'(w+t(v-w)),v-w\rangle  dt\\
&=\langle J'(w),v-w\rangle  +\int_{0}^1
\langle J'(w+t(v-w))-J'(w),v-w\rangle  dt\\
&\ge \langle J'(w),v-w\rangle  +\int_{0}^1
\frac{\alpha}{t} \Vert t(v-w)\Vert^s dt\\
&=\langle J'(w),v-w\rangle  + \frac{\alpha}{s} \Vert v-w\Vert^s
\end{align*}
\item[(b)] From (a), we have for $v\neq w$,
$$
J(v)-J(w) \; > \; \langle J'(w),v-w \rangle
$$
\item[(c)] Still from (a), we have for all $v\in V$,
$$
J(v) \ge J(0) +  \langle J'(0),v\rangle + \frac{\alpha}{s} \Vert
v\Vert^s \ge J(0) -  \Vert J'(0) \Vert  \Vert v \Vert +
\frac{\alpha}{s} \Vert v \Vert^s
$$
which gives the coercivity and the fact that $J$ is bounded from
below.
\end{itemize}
\qed \end{proof}

The above properties yield the following classical result.

\begin{theorem}\label{th:solution}
Under assumptions (A1)-(A2), the problem \eqref{eq:prob} admits a
unique solution $u\in V$ which is equivalently characterized by
\begin{align}
\langle J'(u),v\rangle  = 0\quad \forall v\in V \label{eq:euler}
\end{align}
\end{theorem}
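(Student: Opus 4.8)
The plan is to assemble the three assertions---existence, the Euler characterization \eqref{eq:euler}, and uniqueness---from the results already established. For existence I would invoke Theorem~\ref{th:closed_coercive} with $M = V$. The whole space is trivially weakly closed, and Lemma~\ref{lemma_assumption}(c) gives that $J$ is coercive. It then remains to check weak sequential lower semicontinuity: by (A1) the map $J$ is Fr\'echet differentiable, hence continuous and a fortiori lower semicontinuous, while Lemma~\ref{lemma_assumption}(b) shows it is convex; Proposition~\ref{Zeid}(a) then yields that $J$ is weakly sequentially lower semicontinuous. Theorem~\ref{th:closed_coercive} thus furnishes a minimizer $u \in V$.

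For the characterization I would argue the two implications separately. If $u$ solves \eqref{eq:prob}, fix $v \in V$ and consider the scalar function $t \mapsto J(u + tv)$, which by (A1) is differentiable and attains its minimum at $t = 0$; its derivative there is $\langle J'(u), v\rangle$ and must vanish, and since $v$ is arbitrary this gives \eqref{eq:euler}. Conversely, if $\langle J'(u), v\rangle = 0$ for all $v \in V$, then applying Lemma~\ref{lemma_assumption}(a) with $w = u$ gives, for every $v \in V$,
\begin{equation*}
J(v) - J(u) \ge \langle J'(u), v - u\rangle + \frac{\alpha}{s}\Vert v - u\Vert^s = \frac{\alpha}{s}\Vert v - u\Vert^s \ge 0,
\end{equation*}
so that $u$ is a minimizer.

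Uniqueness follows from the same display, which in fact shows $J(v) > J(u)$ whenever $v \ne u$; hence a minimizer characterized by \eqref{eq:euler} is unique. Alternatively, if $u_1$ and $u_2$ both satisfy \eqref{eq:euler}, testing each against $v = u_1 - u_2$ and subtracting yields $\langle J'(u_1) - J'(u_2), u_1 - u_2\rangle = 0$, and the ellipticity assumption (A2) then forces $\alpha \Vert u_1 - u_2\Vert^s \le 0$, whence $u_1 = u_2$.

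I do not anticipate any genuine obstacle, since all the analytic substance is already contained in the preceding lemmas; the only delicate point is the correct bookkeeping of hypotheses when invoking Proposition~\ref{Zeid}(a), where one must observe that Fr\'echet differentiability supplies the required lower semicontinuity and that convexity (guaranteed by Lemma~\ref{lemma_assumption}(b)) is the remaining ingredient the proposition needs.
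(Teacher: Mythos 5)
Your proposal is correct and follows the same route as the paper, which only sketches this classical argument (existence via Proposition~\ref{Zeid}(a) and Theorem~\ref{th:closed_coercive}, uniqueness from strict convexity, and the Euler characterization from differentiability); you have merely filled in the standard details, all of which check out.
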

\begin{proof} We here only give a sketch of proof of this very classical
result. $J$ is continuous and a fortiori lower semicontinuous. Since
$J$ is convex and lower semicontinuous, it is also weakly
sequentially lower semicontinuous (Proposition \ref{Zeid}(a)). The
existence of a solution then follows from Theorem
\ref{th:closed_coercive}. The uniqueness is given by the strict
convexity of $J$, and the equivalence between \eqref{eq:prob} and
\eqref{eq:euler} classically follows from the differentiability of
$J$.
\qed \end{proof}

\begin{lemma}\label{lem:strong_convergence}
Assume that $J$ satisfies (A1)-(A2). If $\{v_m\}\subset V$ is a
sequence such that $J(v_m)\underset{m\to\infty}{\longrightarrow}
J(u)$, where $u$ is the solution of \eqref{eq:prob}, then
$v_m\rightarrow u$, i.e.
$$
\Vert u -v_m\Vert \underset{m\rightarrow\infty}{\longrightarrow} 0
$$
\end{lemma}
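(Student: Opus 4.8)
The plan is to turn convergence of the energies into convergence in norm by exploiting the quantitative convexity estimate \eqref{eq:J_strong_convexity_consequence} of Lemma \ref{lemma_assumption}(a) in combination with the first-order optimality condition \eqref{eq:euler} satisfied by the minimizer. The guiding observation is that ellipticity makes the norm distance to $u$ controllable by the gap in functional values, so that $J(v_m)\to J(u)$ must force $v_m\to u$.

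First I would invoke Lemma \ref{lemma_assumption}(a) with the specific choice $w=u$ and $v=v_m$, which yields
\[
J(v_m)-J(u) \;\ge\; \langle J'(u),v_m-u\rangle + \frac{\alpha}{s}\,\Vert v_m-u\Vert^s .
\]
Next, since $u$ is the unique solution of \eqref{eq:prob}, Theorem \ref{th:solution} guarantees that it satisfies the Euler equation \eqref{eq:euler}, i.e. $\langle J'(u),v\rangle=0$ for all $v\in V$. Taking $v=v_m-u$ kills the linear term, leaving the clean estimate
\[
\frac{\alpha}{s}\,\Vert v_m-u\Vert^s \;\le\; J(v_m)-J(u).
\]
Finally I would pass to the limit $m\to\infty$: by hypothesis $J(v_m)\to J(u)$, so the right-hand side tends to $0$; since $\alpha>0$ and $s>1$ are fixed constants, this forces $\Vert v_m-u\Vert^s\to 0$ and hence $\Vert v_m-u\Vert\to 0$, which is the claim.

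As for the main obstacle, there really is none of substance here: the whole argument is a two-line combination of the ellipticity consequence and the optimality condition. The one point requiring care is to \emph{eliminate} the linear term $\langle J'(u),v_m-u\rangle$ through \eqref{eq:euler} rather than attempt to bound it, since no sign information on it is available a priori; this is exactly where the fact that $u$ is the global minimizer (and not just an arbitrary point) is used. Consistency of the estimate is automatic, as the energy gap $J(v_m)-J(u)\ge 0$ holds because $u$ minimizes $J$ over all of $V$.
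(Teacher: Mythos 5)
Your proposal is correct and follows exactly the paper's argument: apply the ellipticity consequence \eqref{eq:J_strong_convexity_consequence} with $w=u$, $v=v_m$, annihilate the linear term via the Euler equation \eqref{eq:euler}, and pass to the limit. No differences worth noting.
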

\begin{proof}
By the ellipticity property
\eqref{eq:J_strong_convexity_consequence} of $J$, we have
\begin{align}
J(v_m)-J(u)  \ge \langle J'(u),v_m-u\rangle + \frac{\alpha}{s}\Vert
u-v_{m}\Vert^s = \frac{\alpha}{s}\Vert u-v_{m}\Vert^s.
\end{align}
Therefore,
$$
\frac{\alpha}{s} \Vert u-v_{m}\Vert^s
\le J(v_m)-J(u) 
\underset{m\to \infty}{\longrightarrow} 0,
$$
which ends the proof.
\qed \end{proof}

\section{Progressive Proper Generalized Decompositions
in Tensor Banach Spaces}
\label{sec:progressive_pgd}

\subsection{Definition of progressive
Proper Generalized Decompositions}

We now consider the minimization problem \eqref{eq:prob} of
functional $J$ on a reflexive tensor Banach space $V =
\mathbf{V}_{\|\cdot\|}$. Assume that we have a functional
$J:\mathbf{V}_{\|\cdot\|} \longrightarrow \mathbb{R}$ satisfying
(A1)-(A2) and a weakly closed subset $\Sc_1$ in
$\mathbf{V}_{\|\cdot\|}$ such that
\begin{enumerate}
\item[(B1)] $\Sc_1 \subset \mathbf{V}$, with $\mathbf{0} \in \Sc_1,$
\item[(B2)] for each $\mathbf{v} \in \Sc_1$ we have
$\lambda \mathbf{v} \in \Sc_1$ for all $\lambda \in \mathbb{R},$ and
\item[(B3)] $\mathrm{span} \,\Sc_1$ is dense in $\mathbf{V}_{\|\cdot\|}.$
\end{enumerate}

By using the notation introduced in Section~\ref{several_examples}
we give the following examples.

\begin{example}
Consider $\mathbf{V}_{\|\cdot\|} = L^p_\mu(I;X)$ and $\Sc_1 =
\mathcal{T}_{\mathbf{r}}\left( X \otimes_{a} L^p_\mu(I) \right)$.
\end{example}

\begin{example}
Consider $\mathbf{V}_{\|\cdot\|} = H^{m,2}(\Omega)$ and
$\Sc_1 = \mathcal{T}_{\mathbf{r}}\left(\left._{a}%
\bigotimes_{j=1}^{d}H^{m,2}(\Omega_{j})\right.\right)$.

\end{example}

\begin{example}
Consider $\mathbf{V}_{\|\cdot\|} = H^{m,p}(\Omega)$ and
$\Sc_1 = \mathcal{R}_{1}\left(\left._{a}%
\bigotimes_{j=1}^{d}H^{m,p}(\Omega_{j})\right.\right).$
\end{example}

The set $\mathcal{S}_1$ can be used to characterize the solution of
problem \eqref{eq:prob} as shown by the following result.

\begin{lemma}\label{lem:carac_sol_S1}
Assume that $J$ satisfies (A1)-(A2) and
let $\mathbf{u}^* \in \mathbf{V}_{\|\cdot\|}$ satisfying
\begin{equation}
J(\mathbf{u}^*) = \min_{\mathbf{z} \in \mathcal{S}_1}
J(\mathbf{u}^*+\mathbf{z}).
\end{equation}
Then $\mathbf{u}^*$ solves \eqref{eq:prob}.
\end{lemma}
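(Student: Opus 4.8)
The plan is to reduce the claim to the Euler characterization \eqref{eq:euler} furnished by Theorem~\ref{th:solution}: since \eqref{eq:prob} has a \emph{unique} solution under (A1)--(A2), it suffices to verify that $\langle J'(\mathbf{u}^*),\mathbf{v}\rangle = 0$ for every $\mathbf{v}\in\mathbf{V}_{\|\cdot\|}$, and then Theorem~\ref{th:solution} will identify $\mathbf{u}^*$ as that unique solution. The hypothesis tells us that the null correction $\mathbf{z}=\mathbf{0}$, which lies in $\Sc_1$ by (B1), minimizes $\mathbf{z}\mapsto J(\mathbf{u}^*+\mathbf{z})$ over $\Sc_1$; so the natural device is a first-order optimality condition along directions drawn from $\Sc_1$.

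First I would fix an arbitrary $\mathbf{z}\in\Sc_1$ and introduce the scalar function $\phi(t):=J(\mathbf{u}^*+t\mathbf{z})$ for $t\in\mathbb{R}$. Here property (B2) is exactly what makes the argument go through: it guarantees that $t\mathbf{z}\in\Sc_1$ for every real $t$, whence $\phi(t)\ge \min_{\mathbf{w}\in\Sc_1}J(\mathbf{u}^*+\mathbf{w}) = J(\mathbf{u}^*) = \phi(0)$. Thus $\phi$ attains its minimum at the interior point $t=0$. By Fréchet differentiability (A1), $\phi$ is differentiable with $\phi'(0)=\langle J'(\mathbf{u}^*),\mathbf{z}\rangle$, and the stationarity of an interior minimizer forces $\phi'(0)=0$, i.e. $\langle J'(\mathbf{u}^*),\mathbf{z}\rangle = 0$ for every $\mathbf{z}\in\Sc_1$.

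Next I would propagate this to the whole space. Since $J'(\mathbf{u}^*)$ is linear, the functional $\langle J'(\mathbf{u}^*),\cdot\rangle$ vanishes on $\mathrm{span}\,\Sc_1$; since $J'(\mathbf{u}^*)\in V^\ast$ is continuous and $\mathrm{span}\,\Sc_1$ is dense in $\mathbf{V}_{\|\cdot\|}$ by (B3), it must vanish on all of $\mathbf{V}_{\|\cdot\|}$. Therefore $\langle J'(\mathbf{u}^*),\mathbf{v}\rangle = 0$ for all $\mathbf{v}$, which is precisely condition \eqref{eq:euler}, and the conclusion follows.

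I do not expect a genuine obstacle here; the content is a clean first-order argument. The only two points deserving care are conceptual rather than technical: first, the two-sided scaling in (B2) is what upgrades the usual variational inequality $\langle J'(\mathbf{u}^*),\mathbf{z}\rangle\ge 0$ (which would hold on a mere convex cone) to the \emph{equality} $\langle J'(\mathbf{u}^*),\mathbf{z}\rangle = 0$; and second, because $\Sc_1$ itself need not be a linear subspace, the essential ingredient is the density of $\mathrm{span}\,\Sc_1$ rather than of $\Sc_1$, used together with the continuity of $J'(\mathbf{u}^*)$ to pass from $\Sc_1$ to the entire space.
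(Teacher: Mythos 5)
Your proof is correct and follows essentially the same route as the paper: both derive the first-order condition $\langle J'(\mathbf{u}^*),\mathbf{z}\rangle=0$ for all $\mathbf{z}\in\Sc_1$ from the minimality at $\mathbf{z}=\mathbf{0}$ together with the two-sided scaling in (B2), extend it to all of $\mathbf{V}_{\|\cdot\|}$ by linearity, continuity of $J'(\mathbf{u}^*)$ and the density assumption (B3), and conclude via the Euler characterization of Theorem~\ref{th:solution}. The only cosmetic difference is that the paper first obtains the one-sided inequality $\langle J'(\mathbf{u}^*),\mathbf{z}\rangle\ge 0$ and then invokes (B2) to upgrade it to an equality, whereas you package both steps as stationarity of the scalar function $t\mapsto J(\mathbf{u}^*+t\mathbf{z})$ at an interior minimum.
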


\begin{proof}
For all $\gamma\in\Rbb_+$ and $\mathbf{z}\in\Sc_1$,
$$
J(\mathbf{u}^* + \gamma \mathbf{z}) \ge J(\mathbf{u}^*)
$$
and therefore
$$
\langle J'(\mathbf{u}^*),\mathbf{z}\rangle  = \lim_{\gamma\searrow
0} \frac{1}{\gamma} (J(\mathbf{u}^* + \gamma \mathbf{z}) -
J(\mathbf{u}^*))\ge 0
$$
holds for all $\mathbf{z}\in\Sc_1.$ From (B2), we have
$$
\langle J'(\mathbf{u}^*),\mathbf{z}\rangle  \; = \; 0\quad \forall
\mathbf{z}\in\Sc_1,
$$
From (B3), we then obtain
$$
\langle J'(\mathbf{u}^*),\mathbf{v}\rangle  = 0 \quad \forall
\mathbf{v}\in \mathbf{V}_{\|\cdot\|},
$$
and the lemma follows from Theorem \ref{th:solution}.
\qed \end{proof}

In the following, we denote by $\Sc_m$ the set
$$
\Sc_m = \left\{\sum_{i=1}^m \mathbf{z}_i \;:\; \mathbf{z}_i \in
\Sc_1 \right\}
$$



The next two lemmas will be useful to define a progressive
Proper Generalized Decomposition.

\begin{lemma}\label{lem:closed_equivalent}
For each $\mathbf{v} \in \mathbf{V}_{\|\cdot\|}$, the set
$$
\mathbf{v} + \Sc_1 =\{\mathbf{v}+\mathbf{w} : \mathbf{w} \in \Sc_1
\}
$$
is weakly closed in $\mathbf{V}_{\|\cdot\|}$.
\end{lemma}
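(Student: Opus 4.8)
The plan is to reduce weak closedness of the translated set $\mathbf{v}+\Sc_1$ to the assumed weak closedness of $\Sc_1$, using the elementary fact that translation by a fixed vector preserves weak convergence. Since the notion of ``weakly closed'' in this paper is sequential (Definition preceding Theorem~\ref{th:bounded_closed}), it suffices to work with sequences. So first I would take an arbitrary sequence $\{\mathbf{y}_n\}_{n\in\Nbb}\subset \mathbf{v}+\Sc_1$ with $\mathbf{y}_n\rightharpoonup\mathbf{y}$ in $\mathbf{V}_{\|\cdot\|}$, and write $\mathbf{y}_n=\mathbf{v}+\mathbf{w}_n$ with $\mathbf{w}_n\in\Sc_1$. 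The goal is to show that the weak limit $\mathbf{y}$ again has the form $\mathbf{v}+\mathbf{w}$ for some $\mathbf{w}\in\Sc_1$.

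The key step is to observe that $\mathbf{w}_n=\mathbf{y}_n-\mathbf{v}\rightharpoonup\mathbf{y}-\mathbf{v}$. This follows directly from the linearity of the duality pairing: for every $\varphi\in\mathbf{V}_{\|\cdot\|}^{\ast}$ one has $\langle\varphi,\mathbf{w}_n\rangle=\langle\varphi,\mathbf{y}_n\rangle-\langle\varphi,\mathbf{v}\rangle$, and since $\langle\varphi,\mathbf{y}_n\rangle\to\langle\varphi,\mathbf{y}\rangle$ while $\langle\varphi,\mathbf{v}\rangle$ is constant, we get $\langle\varphi,\mathbf{w}_n\rangle\to\langle\varphi,\mathbf{y}-\mathbf{v}\rangle$. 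Thus the fixed translation $\mathbf{v}$ shifts a weakly convergent sequence to another weakly convergent sequence with the correspondingly shifted limit.

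Having established $\mathbf{w}_n\in\Sc_1$ and $\mathbf{w}_n\rightharpoonup\mathbf{y}-\mathbf{v}$, I would invoke the hypothesis that $\Sc_1$ is weakly closed to conclude $\mathbf{y}-\mathbf{v}\in\Sc_1$. Setting $\mathbf{w}=\mathbf{y}-\mathbf{v}$ then gives $\mathbf{y}=\mathbf{v}+\mathbf{w}\in\mathbf{v}+\Sc_1$, which is exactly what is required. Honestly, I do not expect any serious obstacle here: the only point deserving care is the verification that translation preserves weak convergence, and that is immediate from linearity of the pairing. All the substantive content is carried by the standing assumption that $\Sc_1$ itself is weakly closed; the lemma is essentially the remark that weak closedness is invariant under translation by a fixed element of the space.
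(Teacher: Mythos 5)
Your argument is correct and is essentially identical to the paper's own proof: both subtract the fixed vector $\mathbf{v}$ from the weakly convergent sequence, note that translation preserves weak convergence by linearity of the duality pairing, and invoke the weak closedness of $\Sc_1$. No issues.
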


\begin{proof}
Assume that $\mathbf{v} + \mathbf{w}_n \rightharpoonup \mathbf{w}$
for some $\{\mathbf{w}_n\}_{n \ge 1} \subset \Sc_1,$ then
$\mathbf{w}_n \rightharpoonup \mathbf{w}-\mathbf{v}$ and since
$\Sc_1$ is weakly closed, $\mathbf{w}-\mathbf{v} \in \Sc_1.$ In
consequence $\mathbf{w} \in \mathbf{v} + \Sc_1$ and the lemma
follows.
\qed \end{proof}

\begin{lemma}[Existence of a $\Sc_1$-minimizer]\label{lem:minimizer}
Assume that $J:\mathbf{V}_{\|\cdot\|} \longrightarrow \mathbb{R}$
satisfies (A1)-(A2). Then for any $\mathbf{v}\in
\mathbf{V}_{\|\cdot\|}$, the following problem admits a solution:
$$\min_{\mathbf{z} \in \Sc_1} J(\mathbf{v} + \mathbf{z})
= \min_{\mathbf{w} \in  \mathbf{v} + \Sc_1} J(\mathbf{w})$$
\end{lemma}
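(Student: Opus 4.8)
The plan is to realize this minimization problem as an instance covered by Theorem~\ref{th:closed_coercive}, applied to the feasible set $M := \mathbf{v} + \Sc_1$. The equality of the two infima in the statement is immediate, since $\{\mathbf{v}+\mathbf{z} : \mathbf{z}\in\Sc_1\} = \mathbf{v}+\Sc_1$ by definition, so it suffices to produce a minimizer of $J$ over $M$. To invoke Theorem~\ref{th:closed_coercive} I must check three things: that $M$ is weakly closed, that $J$ is weakly sequentially lower semicontinuous, and that $J$ is coercive on $M$.

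First I would record that $M = \mathbf{v}+\Sc_1$ is weakly closed in $\mathbf{V}_{\|\cdot\|}$, which is exactly the content of Lemma~\ref{lem:closed_equivalent}; no further work is needed here, since that lemma was stated for an arbitrary shift $\mathbf{v}$. Next, for the lower semicontinuity I would argue that assumption (A1) makes $J$ Fr\'echet differentiable and hence continuous, so in particular lower semicontinuous, while Lemma~\ref{lemma_assumption}(b) gives that $J$ is (strictly) convex. A convex, lower semicontinuous functional on a reflexive Banach space is weakly sequentially lower semicontinuous by Proposition~\ref{Zeid}(a); this yields the required semicontinuity on all of $\mathbf{V}_{\|\cdot\|}$, and \emph{a fortiori} on $M$.

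Finally, coercivity is furnished directly by Lemma~\ref{lemma_assumption}(c), which asserts $\lim_{\|\mathbf{w}\|\to\infty} J(\mathbf{w}) = +\infty$ on the whole space; since coercivity is a property of $J$ itself, it holds verbatim when restricted to $M$. With $M$ weakly closed, $J$ weakly sequentially lower semicontinuous and coercive on $M$, Theorem~\ref{th:closed_coercive} delivers a minimizer $\mathbf{w}^\ast \in M$, i.e. a $\mathbf{z}^\ast = \mathbf{w}^\ast - \mathbf{v}\in\Sc_1$ solving the problem. I do not expect any genuine obstacle: every hypothesis has been arranged in the preceding lemmas, and the only mild point to keep in mind is that $\Sc_1$ itself need not be bounded, which is precisely why the coercive version (Theorem~\ref{th:closed_coercive}) rather than the bounded version (Theorem~\ref{th:bounded_closed}) must be used.
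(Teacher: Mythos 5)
Your proposal is correct and follows exactly the paper's own argument: weak closedness of $\mathbf{v}+\Sc_1$ from Lemma~\ref{lem:closed_equivalent}, weak sequential lower semicontinuity from continuity plus convexity via Proposition~\ref{Zeid}(a), coercivity from Lemma~\ref{lemma_assumption}(c), and then Theorem~\ref{th:closed_coercive}. No discrepancies to report.
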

\begin{proof}
Fr\'echet differentiability of $J$ implies that $J$ is continuous
and since $J$ is convex, we have that $J$ is weakly sequentially
lower semicontinuous by Proposition~\ref{Zeid}. Moreover, $J$ is
coercive on $\mathbf{V}_{\|\cdot\|}$ by
Lemma~\ref{lemma_assumption}(c). By
Lemma~\ref{lem:closed_equivalent}, $\mathbf{v} + \Sc_1$ is a weakly
closed subset in
$\mathbf{V}_{\|\cdot\|}.$ Then, 
the existence
of a minimizer follows from Theorem~\ref{th:closed_coercive}.
\qed \end{proof}

\begin{definition}[Progressive PGDs]
\label{def:updated_progressive_pgd}
Assume that $J:\mathbf{V}_{\|\cdot\|} \longrightarrow \mathbb{R}$
satisfies (A1)-(A2), we define a progressive Proper
Generalized Decomposition $\{\mathbf{u}_m\}_{m \ge 1},$ over
$\Sc_1,$  of $\mathbf{u} = \arg \min_{\mathbf{v} \in
\mathbf{V}_{\|\cdot\|}} J(\mathbf{v})$ as follows. We let
$\mathbf{u}_0=\mathbf{0}$ and for $m \ge 1,$ we construct
$\mathbf{u}_m \in \mathbf{V}_{\|\cdot\|}$ from $\mathbf{u}_{m-1} \in
\mathbf{V}_{\|\cdot\|}$ as we show below.
We first find an element $\mathbf{\hat  z}_m \in \Sc_1 \subset
\mathbf{V}$ such that
$$
J(\mathbf{u}_{m-1}+ \mathbf{\hat z}_m) = \min_{\mathbf{z}\in \Sc_1}
J(\mathbf{u}_{m-1} + \mathbf{z}) \quad (*).
$$
Then at each step $m$ and before to update $m$ to $m+1,$
we can choose one of the following strategies denoted by
$c,l$ and $r,$ respectively:
\begin{itemize}
\item[$(c)$] Let $\mathbf{z}_m = \mathbf{\hat  z}_m.$
Define $\mathbf{u}_{m} = \mathbf{u}_{m-1} + \mathbf{ z}_m,$ update
$m$ to $m+1$ and goto $(*).$
\item[$(l)$] Let $\mathbf{z}_m = \mathbf{\hat  z}_m.$
Construct a closed subspace
$\mathbf{U}(\mathbf{u}_{m-1}+\mathbf{z}_m)$ in
$\mathbf{V}_{\|\cdot\|}$ such that $\mathbf{u}_{m-1}+\mathbf{z}_m
\in \mathbf{U}(\mathbf{u}_{m-1}+\mathbf{z}_m)$. Then, define
$$
\mathbf{u}_{m} = \arg\min_{\mathbf{v}\in \mathbf{U}(\mathbf{u}_{m-1}+\mathbf{z}_m)}
J(\mathbf{v}),
$$
update $m$ to $m+1$ and go to $(*).$
\item[$(r)$] Construct a closed
subspace $\mathbf{U}(\mathbf{\hat  z}_m)$ in
$\mathbf{V}_{\|\cdot\|}$ such that $\mathbf{\hat  z}_m \in
\mathbf{U}(\mathbf{\hat  z}_m),$ and define
$$
\mathbf{z}_m = \arg\min_{\mathbf{z}\in \mathbf{U}(\mathbf{\hat
z}_m)} J(\mathbf{u}_{m-1} + \mathbf{z}).
$$
Then, define
$$
\mathbf{u}_{m} = \mathbf{u}_{m-1} +  \mathbf{z}_m=
\arg\min_{\mathbf{v}\in \mathbf{u}_{m-1}+\mathbf{U}(\mathbf{\hat
z}_m)} J(\mathbf{v}),
$$
update $m$ to $m+1$ and go to $(*).$
\end{itemize}
\end{definition}
Strategies of type $(l)$ and $(r)$ are called \textit{updates}.
Observe that to each progressive Proper Generalized Decomposition
$\{\mathbf{u}_m\}_{m \ge 1}$ of $\mathbf{u}$ we can assign a
sequence of symbols (perhaps finite), that we will denote by
$$
\underline{\boldsymbol{\alpha}}(\mathbf{u}) = \boldsymbol{\alpha}_1 \boldsymbol{\alpha}_2
\cdots \boldsymbol{\alpha}_k \cdots
$$
where $\boldsymbol{\alpha}_k \in \{c,l, r \}$ for all
$k=1,2,\ldots.$ That  means that $\mathbf{u}_k$ was obtained without
update if $\boldsymbol{\alpha}_k=c$, or with an update strategy of
type $l$ or $r$ if $\boldsymbol{\alpha}_k=l$ or
$\boldsymbol{\alpha}_k=r$ respectively. In particular, the
progressive PGD defined in \cite{CAN10} coincides with a PGD where
$\boldsymbol{\alpha}_k = c$ for all $k \ge 1.$ Such a decomposition
is called a \textit{purely progressive PGD}, while a decomposition
such that $\boldsymbol{\alpha}_k = l$ or $\boldsymbol{\alpha}_k = r$
for some $k$ is called an \textit{updated progressive PGD}.

\begin{remark}\label{rem:several_updates}
The update $\boldsymbol{\alpha}_m= l$ can be defined with
several updates at each iteration.
Letting $\mathbf{u}_m^{(0)}=\mathbf{u}_{m-1}+\mathbf{\hat  z}_m$, we
introduce a sequence $\{\mathbf{u}_m^{(p)}\}_{p=1}^{d_m}\subset
\mathbf{V}_{\|\cdot\|}$ defined by
$$
\mathbf{u}_m^{(p+1)} =  \arg\min_{\mathbf{v}\in
\mathbf{U}(\mathbf{u}^{(p)}_{m})} J(\mathbf{v})
$$
with $\mathbf{U}(\mathbf{u}^{(p)}_{m})$ being a closed linear
subspace of $\mathbf{V}_{\|\cdot\|}$ which contains
$\mathbf{u}^{(p)}_{m}$. We finally let $\mathbf{u}_m
=\mathbf{u}_m^{(d_m)}$.
\end{remark}

In \cite{FalcoHackbusch} it was introduced the following definition.
For a given $\mathbf{v}$ in the algebraic tensor space $\mathbf{V}$,
the minimal subspaces $U_{j,\min}(\mathbf{v}) \subset V_j$ are given
by the intersection of all subspaces $U_{j} \subset V_j$ satisfying
$\mathbf{v} \in\left._{a}\bigotimes_{j=1}^{d}U_{j}\right..$ It can
be shown \cite{FalcoHackbusch} that
$\left._{a}\bigotimes_{j=1}^{d}U_{j,\min}(\mathbf{v})\right.$ is a
finite dimensional subspace of $\mathbf{V}.$

\begin{example}[Illustrations of updates]
For a given $\mathbf{v}_m\in \mathbf{V}_{\|\cdot\|}$ (e.g.
$\mathbf{v}_m = \mathbf{u}_{m-1} + \mathbf{z}_m$ if
$\boldsymbol{\alpha}_m=l$ or $\mathbf{v}_m = \mathbf{\hat z}_m$  if
$\boldsymbol{\alpha}_m=r$) there are several possible choices for
defining a linear subspace $\mathbf{U}(\mathbf{v}_m)$. Among others,
we have
\begin{itemize}
\item $\mathbf{U}(\mathbf{v}_m) = \left._a
\bigotimes_{j=1}^d U_{j,\min}(\mathbf{v}_m)\right.$. In the case of
$\boldsymbol{\alpha}_m=l,$ all subspaces
$\mathbf{U}(\mathbf{u}_{m-1} + \mathbf{z_m})$ are finite dimensional
and we have that $\mathbf{u}_m \in \mathbf{V}$ for all $m \ge 1.$
\item Assume that $\mathbf{v}_{m} = \sum_{i=1}^{m} \alpha_i \mathbf{z}_i$ for some
$\{\mathbf{z}_1,\ldots,\mathbf{z}_m\} \subset
\mathbf{V}_{\|\cdot\|},$ $\alpha_i\in\Rbb$, $1\le i\le m$. Then we
can define
$$\mathbf{U}(\mathbf{v}_m) =
\text{span }\{\mathbf{z}_1,\ldots,\mathbf{z}_m\}.$$ In the context
of Greedy algorithms for computing best approximations, an update of
type $\boldsymbol{\alpha}_m = r$ by using an orthonormal basis of
$\mathbf{U}(\mathbf{v}_m)$ corresponds to an orthogonal Greedy
algorithm.
\item Assume $\mathbf{v}_m\in\mathbf{V}$. Fix $k \in \{1,2,\ldots,d\}.$
By using $\left._a \bigotimes_{j=1}^d V_j \right. \cong V_k
\otimes_a \left(_a\bigotimes_{j \neq k} V_j\right),$ we can write
$\mathbf{v}_{m} = \sum_{i=1}^m w^{(k)}_i \otimes \left(\bigotimes_{j
\neq k}w_i^{(j)}\right)$ for some elementary tensors $w^{(k)}_i
\otimes \left(\bigotimes_{j \neq k}w_i^{(j)}\right)$ for
$i=1,\ldots,m.$ Then we can define the linear subspace
$$
\mathbf{U}(\mathbf{v}_m) = \left\{\sum_{i=1}^m v^{(k)}_i \otimes
\left(\bigotimes_{j \neq k}w_i^{(j)}\right) : v^{(k)}_i \in V_k, \,
1 \le i \le m \right\}.
$$
The minimization on $\mathbf{U}(\mathbf{v}_m)$ corresponds to an
update of functions along dimension $k$ (functions in the Banach
space $V_k$). Following the remark \ref{rem:several_updates},
several updates could be defined by choosing a sequence of updated
dimensions.
\end{itemize}
\end{example}

\subsection{On the convergence of the progressive PGDs}

Now, we study the convergence of progressive PGDs.
Recall that $\mathbf{\hat z}_m \in\Sc_1$ is a solution of
$$
J(\mathbf{u}_{m-1}+\mathbf{\hat z}_m) = \min_{\mathbf{z}\in\Sc_1}
J(\mathbf{u}_{m-1}+\mathbf{z}),
$$
For  $\boldsymbol{\alpha}_m = c,$ we have $\mathbf{z}_m = \mathbf{\hat z}_m$
and $\mathbf{u}_m=\mathbf{u}_{m-1}+\mathbf{z}_m,$ so that
$$J(\mathbf{u}_m) = J(\mathbf{u}_{m-1}+\mathbf{z}_m) = J(\mathbf{u}_{m-1}+\mathbf{\hat
z}_m)
$$
For $\boldsymbol{\alpha}_m = l,$ we have $\mathbf{z}_m =
\mathbf{\hat z}_m$ and $\mathbf{u}_m $ is obtained by an update (or
several updates) of $\mathbf{u}_{m-1}+\mathbf{z}_m$, so that $$
J(\mathbf{u}_m) \le J(\mathbf{u}_{m-1}+\mathbf{z}_m) =
J(\mathbf{u}_{m-1}+\mathbf{\hat z}_m)
$$
Otherwise, for $\boldsymbol{\alpha}_m =r,$ we have $\mathbf{u}_m =
\mathbf{u}_{m-1} + \mathbf{z}_m$ with $\mathbf{z}_m$ obtained by an
update of $\mathbf{\hat z}_m$, such that
$$J(\mathbf{u}_m) = J(\mathbf{u}_{m-1}+\mathbf{z}_m) \le J(\mathbf{u}_{m-1}+\mathbf{\hat
z}_m)
$$
We begin with the following Lemma.

\begin{lemma}\label{lem:J_decrease}
Assume that $J$ satisfies (A1)-(A2). Then $\{J(\mathbf{u}_m)\}_{m\ge
1},$ where $\{\mathbf{u}_m\}_{m \ge 1}$ is a progressive
Proper Generalized Decomposition, over $\Sc_1$,  of
$$\mathbf{u} =
\arg \min_{\mathbf{v} \in \mathbf{V}_{\|\cdot\|}} J(\mathbf{v}),
$$
is
a non increasing sequence:
$$
J(\mathbf{u}_{m})\le J(\mathbf{u}_{m-1}) \text{ for all } m \ge 1.
$$
Moreover, if $J(\mathbf{u}_m)=J(\mathbf{u}_{m-1})$,
$\mathbf{u}_{m-1}$ is the solution of \eqref{eq:prob}.
\end{lemma}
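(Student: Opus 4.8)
The plan is to establish the monotonicity directly from the defining minimization in Definition~\ref{def:updated_progressive_pgd}, exploiting the single structural fact that $\mathbf{0}\in\Sc_1$ (assumption (B1)), and then to treat the equality case by collapsing the resulting chain of inequalities and invoking the characterization of the solution provided by Lemma~\ref{lem:carac_sol_S1}.

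First I would record the one inequality that drives everything. Since $\hat{\mathbf{z}}_m$ realizes $\min_{\mathbf{z}\in\Sc_1}J(\mathbf{u}_{m-1}+\mathbf{z})$ and $\mathbf{0}\in\Sc_1$, the admissible choice $\mathbf{z}=\mathbf{0}$ yields
$$
J(\mathbf{u}_{m-1}+\hat{\mathbf{z}}_m)=\min_{\mathbf{z}\in\Sc_1}J(\mathbf{u}_{m-1}+\mathbf{z})\le J(\mathbf{u}_{m-1}+\mathbf{0})=J(\mathbf{u}_{m-1}).
$$
Next I would verify, case by case on the symbol $\boldsymbol{\alpha}_m\in\{c,l,r\}$, that in each strategy $J(\mathbf{u}_m)\le J(\mathbf{u}_{m-1}+\hat{\mathbf{z}}_m)$: for $\boldsymbol{\alpha}_m=c$ this is an equality by construction; for $\boldsymbol{\alpha}_m=l$ it holds because $\mathbf{u}_m$ minimizes $J$ over a subspace containing $\mathbf{u}_{m-1}+\mathbf{z}_m=\mathbf{u}_{m-1}+\hat{\mathbf{z}}_m$; and for $\boldsymbol{\alpha}_m=r$ because $\mathbf{z}_m$ minimizes $J(\mathbf{u}_{m-1}+\cdot)$ over a subspace containing $\hat{\mathbf{z}}_m$. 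Chaining these with the displayed inequality gives $J(\mathbf{u}_m)\le J(\mathbf{u}_{m-1})$ for every $m\ge1$, which is the monotonicity claim.

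For the equality statement, suppose $J(\mathbf{u}_m)=J(\mathbf{u}_{m-1})$. Then the sandwich $J(\mathbf{u}_{m-1})=J(\mathbf{u}_m)\le J(\mathbf{u}_{m-1}+\hat{\mathbf{z}}_m)\le J(\mathbf{u}_{m-1})$ forces every inequality to be an equality; in particular $J(\mathbf{u}_{m-1})=J(\mathbf{u}_{m-1}+\hat{\mathbf{z}}_m)=\min_{\mathbf{z}\in\Sc_1}J(\mathbf{u}_{m-1}+\mathbf{z})$. Since $\mathbf{0}\in\Sc_1$, this says precisely that $\mathbf{u}^{*}:=\mathbf{u}_{m-1}$ satisfies $J(\mathbf{u}^{*})=\min_{\mathbf{z}\in\Sc_1}J(\mathbf{u}^{*}+\mathbf{z})$, so Lemma~\ref{lem:carac_sol_S1} yields that $\mathbf{u}_{m-1}$ solves \eqref{eq:prob}.

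I do not expect a genuine obstacle here: the argument is elementary given the machinery already in place. The only point requiring care is handling all three update strategies uniformly, which is why I route everything through the single quantity $J(\mathbf{u}_{m-1}+\hat{\mathbf{z}}_m)$ rather than through the particular value of $\mathbf{u}_m$; the rest is the observation that $\mathbf{0}\in\Sc_1$ turns the ``no update improves'' condition into exactly the hypothesis of Lemma~\ref{lem:carac_sol_S1}.
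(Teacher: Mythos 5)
Your proposal is correct and follows essentially the same route as the paper: the chain $J(\mathbf{u}_m)\le J(\mathbf{u}_{m-1}+\hat{\mathbf{z}}_m)\le J(\mathbf{u}_{m-1}+\mathbf{z})$ for all $\mathbf{z}\in\Sc_1$ (verified per strategy, as the paper does just before the lemma), specialized at $\mathbf{z}=\mathbf{0}$ via (B1), and the equality case reduced to Lemma~\ref{lem:carac_sol_S1}. No gaps.
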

\begin{proof}
By definition, we have
$$
J(\mathbf{u}_m)\le J(\mathbf{u}_{m-1}+\mathbf{z}_m) \le
J(\mathbf{u}_{m-1}+\mathbf{\hat z}_m)\le
J(\mathbf{u}_{m-1}+\mathbf{z}) \quad \forall \mathbf{z}\in\Sc_1
$$
In particular, since $\mathbf{0}\in\Sc_1$ by assumption (B1), we
have $J(\mathbf{u}_m)\le J(\mathbf{u}_{m-1})$. If $J(\mathbf{u}_m)=
J(\mathbf{u}_{m-1})$, we have
$$
J(\mathbf{u}_{m-1}) = \min_{\mathbf{z}\in \Sc_1}
J(\mathbf{u}_{m-1}+\mathbf{z})
$$
and by Lemma \ref{lem:carac_sol_S1}, we have that $\mathbf{u}_{m-1}$
solves \eqref{eq:prob}.
\qed \end{proof}

\begin{remark}
If $J(\mathbf{u}_m)=J(\mathbf{u}_{m-1})$ holds for some $m > 1,$
that is $\mathbf{u}_{m-1}$ is the solution of \eqref{eq:prob}, then
the updated PGD is described by a finite sequence of symbols
$\underline{\boldsymbol{\alpha}} (\mathbf{u}) =\boldsymbol{\alpha}_1
\boldsymbol{\alpha}_2 \cdots \boldsymbol{\alpha}_{m-1},$ where
$\boldsymbol{\alpha}_k \in \{c,l,r\}$ for $1 \le k \le m-1.$
Otherwise, $\{J(\mathbf{u}_m)\}_{m \in \mathbb{N}}$ is a strictly
decreasing sequence of real numbers and
$\underline{\boldsymbol{\alpha}}(\mathbf{u}) \in
\{c,l,r\}^{\mathbb{N}}.$
\end{remark}

\begin{definition}
Let $\boldsymbol{\alpha} \in \{c,l,r\}.$ Then
$\boldsymbol{\alpha}^{\infty} \in \{c,l,r\}^{\mathbb{N}}$ denotes
the infinite sequence of symbols $\boldsymbol{\alpha} \,
\boldsymbol{\alpha} \cdots \boldsymbol{\alpha} \cdots .$
\end{definition}

From now on, we will distinguish two convergence studies, one with a
weak continuity assumption on functional $J$, the other one without
weak continuity assumption on $J$ but with an additional Lipschitz
continuity assumption on the differential $J'$.

\subsubsection{A first approach for weakly sequentially
continuous  functionals}

Here, we introduce the following assumption.
\begin{itemize}
\item[(A3)] The map $J:\mathbf{V}_{\|\cdot\|} \longrightarrow \mathbb{R}$
is weakly sequentially continuous.
\end{itemize}

\begin{theorem}\label{th:convergence_weakly_continuous}
Assume that $J$ satisfies (A1)-(A3). Then every progressive Proper
Generalized Decomposition $\{\mathbf{u}_m\}_{m \ge 1},$ over $\Sc_1,$  of
$$\mathbf{u} = \arg \min_{\mathbf{v} \in \mathbf{V}_{\|\cdot\|}}
J(\mathbf{v})$$ converges in $\mathbf{V}_{\|\cdot\|}$ to
$\mathbf{u},$ that is,
$$
\lim_{m\rightarrow\infty}\Vert \mathbf{u} -\mathbf{u}_m\Vert
= 0
$$
\end{theorem}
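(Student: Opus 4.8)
The plan is to combine the monotone decrease of the objective values with the weak continuity hypothesis (A3): I extract a weakly convergent subsequence of the iterates, identify its weak limit as an $\Sc_1$-minimizer via (A3), and then invoke the characterization of Lemma~\ref{lem:carac_sol_S1} together with the strong-convergence criterion of Lemma~\ref{lem:strong_convergence}. First I would record that $\{J(\mathbf{u}_m)\}_{m\ge 1}$ is non-increasing by Lemma~\ref{lem:J_decrease} and bounded from below by Lemma~\ref{lemma_assumption}(c); hence it converges to some limit $L$. Since $\mathbf{u}_0=\mathbf{0}$, every $\mathbf{u}_m$ lies in the sublevel set $\{\mathbf{v}: J(\mathbf{v})\le J(\mathbf{0})\}$, which is bounded because $J$ is coercive (Lemma~\ref{lemma_assumption}(c)). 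As $\mathbf{V}_{\|\cdot\|}$ is reflexive, I can then extract a subsequence $\mathbf{u}_{m_k}\rightharpoonup \mathbf{u}^*$ for some $\mathbf{u}^*\in\mathbf{V}_{\|\cdot\|}$.

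Next I would identify $\mathbf{u}^*$. Applying (A3) along the subsequence gives $J(\mathbf{u}^*)=\lim_{k}J(\mathbf{u}_{m_k})=L$. The decisive ingredient is the chain of inequalities from the proof of Lemma~\ref{lem:J_decrease}, which, shifted by one index, reads $J(\mathbf{u}_{m+1})\le J(\mathbf{u}_m+\mathbf{z})$ for every $\mathbf{z}\in\Sc_1$ and every $m\ge 0$. Fixing $\mathbf{z}\in\Sc_1$ and specializing to $m=m_k$, the left-hand side tends to $L$ (the whole sequence of values converges, hence so does this shifted subsequence), while $\mathbf{u}_{m_k}+\mathbf{z}\rightharpoonup \mathbf{u}^*+\mathbf{z}$, so (A3) forces the right-hand side to tend to $J(\mathbf{u}^*+\mathbf{z})$. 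Passing to the limit yields $J(\mathbf{u}^*)=L\le J(\mathbf{u}^*+\mathbf{z})$ for all $\mathbf{z}\in\Sc_1$, i.e. $J(\mathbf{u}^*)=\min_{\mathbf{z}\in\Sc_1}J(\mathbf{u}^*+\mathbf{z})$, the minimum being attained at $\mathbf{z}=\mathbf{0}\in\Sc_1$ by (B1). Lemma~\ref{lem:carac_sol_S1} then shows that $\mathbf{u}^*$ solves \eqref{eq:prob}, whence $\mathbf{u}^*=\mathbf{u}$ by uniqueness (Theorem~\ref{th:solution}) and $L=J(\mathbf{u})$.

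Finally, since the full sequence satisfies $J(\mathbf{u}_m)\to L=J(\mathbf{u})$, Lemma~\ref{lem:strong_convergence} upgrades this to strong convergence $\|\mathbf{u}-\mathbf{u}_m\|\to 0$, which is exactly the claim; note that the whole sequence converges, not merely the extracted subsequence.

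The step I expect to be most delicate is the index shift. One is tempted to pass to the limit directly in $J(\mathbf{u}_{m_k})\le J(\mathbf{u}_{m_k-1}+\mathbf{z})$, but there is no control on the weak behaviour of the predecessors $\mathbf{u}_{m_k-1}$ along the extracted indices. The resolution is to use instead the relation between step $m_k$ and step $m_k+1$: only the \emph{objective values} of the shifted sequence enter on the left, and these converge to $L$ for free by monotonicity, while on the right the fixed perturbation $\mathbf{z}$ keeps the weak limit under control through (A3). It is worth emphasizing that this argument is insensitive to the update symbol $\boldsymbol{\alpha}_k\in\{c,l,r\}$, since all three strategies obey the same chain of inequalities recorded in Lemma~\ref{lem:J_decrease}.
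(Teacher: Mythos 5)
Your proof is correct and follows essentially the same route as the paper's: extract a weakly convergent subsequence of the bounded iterates, pass to the limit in the shifted inequality $J(\mathbf{u}_{m_k+1})\le J(\mathbf{u}_{m_k}+\mathbf{z})$ using the weak sequential continuity of $J$ to identify the weak limit as an $\Sc_1$-minimizer, and conclude via Lemmas~\ref{lem:carac_sol_S1} and~\ref{lem:strong_convergence}. The index-shift subtlety you single out is precisely the device the paper uses (it compares step $m_k$ with step $m_k+1$ rather than with $m_k-1$).
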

\begin{proof}
From Lemma \ref{lem:J_decrease}, $\{J(\mathbf{u}_{m})\}$ is a non
increasing sequence. If there exists $m$ such that
$J(\mathbf{u}_{m})=J(\mathbf{u}_{m-1})$, from Lemma
\ref{lem:J_decrease}, we have $\mathbf{u}_m=\mathbf{u}$, which ends
the proof. Let us now suppose that
$J(\mathbf{u}_m)<J(\mathbf{u}_{m-1})$ for all $m$. $J(\mathbf{u}_m)$
is a strictly decreasing sequence which is bounded below by
$J(\mathbf{u})$. Then, there exists
$$
J^*=\lim_{m\to \infty} J(\mathbf{u}_m) \ge J(\mathbf{u}).
$$
If $J^*=J(\mathbf{u})$, Lemma \ref{lem:strong_convergence} allows to
conclude that $\mathbf{u}_m \rightarrow \mathbf{u}.$ Therefore, it
remains to prove that $J^*=J(\mathbf{u})$. Since $J$ is coercive,
the sequence $\{\mathbf{u}_m\}_{m\in\Nbb}$ is bounded in
$\mathbf{V}_{\|\cdot\|}$. Then, there exists a subsequence
$\{\mathbf{u}_{m_k}\}_{k\in\Nbb}$ that weakly converges to some
$\mathbf{u}^*\in V$. Since $J$ is weakly sequentially continuous, we
have
$$J^* = \lim_{k\to\infty} J(\mathbf{u}_{m_k}) = J(\mathbf{u}^*).$$ By definition of
the PGD, we have for all $\mathbf{z}\in \Sc_1$,
$$
J(\mathbf{u}_{m_{(k+1)}}) \le J(\mathbf{u}_{m_{k}+1}) \le
J(\mathbf{u}_{m_{k}}+\mathbf{z})
$$
Taking the limit with $k$, and using the weak sequential continuity of $J$, we
obtain $$ J(\mathbf{u}^*) \le J(\mathbf{u}^*+\mathbf{z}) \quad
\forall \mathbf{z}\in \Sc_1,$$ and by Lemma \ref{lem:carac_sol_S1},
we obtain $\mathbf{u}^*=\mathbf{u}$ and a fortiori
$J(\mathbf{u}^*)=J(\mathbf{u}),$ that concludes the proof.
\qed \end{proof}

\subsubsection{A second approach for
a class of functionals with Lipschitz continuous derivative on
bounded sets}

Now, assume that 
assumption (A3) is replaced by
\begin{itemize}
\item[(A3)] $J':\mathbf{V}_{\|\cdot\|} \longrightarrow \mathbf{V}_{\|\cdot\|}^\ast$ is Lipschitz continuous
on bounded sets, i.e. for $A$ a bounded set in
$\mathbf{V}_{\|\cdot\|}$, there exists a constant $C_A>0$ such that
\begin{align}
\Vert J'(\mathbf{v})-J'(\mathbf{w}) \Vert \;\le\; C_A \Vert
\mathbf{v}-\mathbf{w}\Vert\label{eq:J_lipschitz}
\end{align}
for all $\mathbf{v},\mathbf{w} \in A.$
\end{itemize}
The next three lemmas will give some useful properties of the
sequence $\{\mathbf{z}_m\}_{m \ge 1}.$

\begin{lemma}\label{lem:progressive_euler_ortho}
Assume that $J$ satisfies (A1)-(A2) and let $\{\mathbf{u}_m\}_{m \ge
1}$ be a progressive Proper Generalized Decomposition, over
$\Sc_1$,  of
$$\mathbf{u} = \arg \min_{\mathbf{v} \in \mathbf{V}_{\|\cdot\|}}
J(\mathbf{v}).$$ Then
$$
\langle J'(\mathbf{u}_{m-1}+\mathbf{z}_m), \mathbf{z}_m \rangle = 0,
$$
for all $m\ge 1.$
\end{lemma}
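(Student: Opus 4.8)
The plan is to argue by cases according to the symbol $\boldsymbol{\alpha}_m \in \{c,l,r\}$ assigned to step $m$, observing that in every case the increment $\mathbf{z}_m$ is a minimizer of $\mathbf{z} \mapsto J(\mathbf{u}_{m-1}+\mathbf{z})$ over a set that contains the whole line $\Rbb\,\mathbf{z}_m$. The desired identity is then nothing but the first-order optimality condition read off along that line, which is licit because $J$ is Fr\'echet differentiable by (A1). Concretely, for any fixed direction $\mathbf{w}$ the scalar map $t \mapsto J(\mathbf{u}_{m-1}+\mathbf{z}_m + t\mathbf{w})$ is differentiable with derivative $\langle J'(\mathbf{u}_{m-1}+\mathbf{z}_m+t\mathbf{w}),\mathbf{w}\rangle$ by the chain rule, a fact I will use repeatedly.

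First I would treat the cases $\boldsymbol{\alpha}_m=c$ and $\boldsymbol{\alpha}_m=l$ together, since in both $\mathbf{z}_m=\mathbf{\hat z}_m$ is a solution of $(*)$, i.e. it minimizes $J(\mathbf{u}_{m-1}+\mathbf{z})$ over $\mathbf{z}\in\Sc_1$. The key structural input is assumption (B2): $\Sc_1$ is invariant under scalar multiplication, so $\gamma\,\mathbf{\hat z}_m\in\Sc_1$ for every $\gamma\in\Rbb$. Hence the scalar function $\phi(\gamma)=J(\mathbf{u}_{m-1}+\gamma\,\mathbf{\hat z}_m)$ attains its minimum over $\Rbb$ at the interior point $\gamma=1$, and differentiability gives $\phi'(1)=0$, that is $\langle J'(\mathbf{u}_{m-1}+\mathbf{\hat z}_m),\mathbf{\hat z}_m\rangle=0$. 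Since $\mathbf{z}_m=\mathbf{\hat z}_m$, this is exactly the claim. The fact that in case $l$ the point $\mathbf{u}_m$ is obtained by a further minimization over a subspace is irrelevant here, because the statement concerns $\mathbf{u}_{m-1}+\mathbf{z}_m$ and not $\mathbf{u}_m$.

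Next I would handle $\boldsymbol{\alpha}_m=r$, where $\mathbf{z}_m$ minimizes $J(\mathbf{u}_{m-1}+\mathbf{z})$ over the \emph{closed linear subspace} $\mathbf{U}(\mathbf{\hat z}_m)$ with $\mathbf{z}_m\in\mathbf{U}(\mathbf{\hat z}_m)$. Because the admissible set is now a genuine subspace, the full Euler equation holds: for every $\mathbf{w}\in\mathbf{U}(\mathbf{\hat z}_m)$ the map $t\mapsto J(\mathbf{u}_{m-1}+\mathbf{z}_m+t\mathbf{w})$ has a minimum at $t=0$, whence $\langle J'(\mathbf{u}_{m-1}+\mathbf{z}_m),\mathbf{w}\rangle=0$ for all such $\mathbf{w}$. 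Specializing to the admissible direction $\mathbf{w}=\mathbf{z}_m$ yields $\langle J'(\mathbf{u}_{m-1}+\mathbf{z}_m),\mathbf{z}_m\rangle=0$, completing this case.

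There is no serious obstacle: all three cases reduce to differentiating $J$ along a single admissible line through $\mathbf{z}_m$ and invoking (A1). The only point deserving care is conceptual rather than technical, namely that for $c$ and $l$ one cannot assert the full Euler equation $\langle J'(\mathbf{u}_{m-1}+\mathbf{z}_m),\mathbf{w}\rangle=0$ for all $\mathbf{w}\in\Sc_1$ (as was done globally in Lemma~\ref{lem:carac_sol_S1} using a point solving the full problem), since $\Sc_1$ is only a cone and $\mathbf{\hat z}_m$ need not be an unconstrained minimizer; the argument must be restricted to the single ray $\gamma\mapsto\gamma\,\mathbf{\hat z}_m$, along which optimality at $\gamma=1$ is guaranteed by $(*)$ together with (B2). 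This is precisely what the stated identity requires.
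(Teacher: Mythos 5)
Your proposal is correct and follows essentially the same route as the paper: in all three cases the paper also reduces to first-order optimality of $\lambda\mapsto J(\mathbf{u}_{m-1}+\lambda\mathbf{w}_m)$ along the line through $\mathbf{z}_m$, using (B2) for the cases $c,l$ and the linearity of $\mathbf{U}(\mathbf{\hat z}_m)$ for the case $r$. The only cosmetic difference is that the paper normalizes $\mathbf{z}_m=\lambda_m\mathbf{w}_m$ and writes out the one-sided difference quotients explicitly, whereas you invoke the derivative of the scalar map directly; the content is identical.
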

\begin{proof}
Let $\mathbf{z}_m= \lambda_m \mathbf{w}_m$, with
$\lambda_m\in\Rbb^+$ and $\norm[\mathbf{w}_m]=1$. In the cases
$\boldsymbol{\alpha}_m=c$ (purely progressive PGD) and
$\boldsymbol{\alpha}_m =l,$ we have $\mathbf{z}_m =\mathbf{\hat z}_m
\in \arg\min_{\mathbf{z}\in \Sc_1} J(\mathbf{u}_{m-1}+\mathbf{z})$.
From assumption (B2), we obtain
$$J(\mathbf{u}_{m-1}+\lambda_m \mathbf{w}_m)\le
J(\mathbf{u}_{m-1}+\lambda \mathbf{w}_m)$$ for all $\lambda \in
\Rbb$. This inequality is also true for $\boldsymbol{\alpha}_m = r$
since $\mathbf{z}_m = \arg\min_{\mathbf{z}\in\mathbf{U}(\mathbf{\hat
z}_m)} J(\mathbf{u}_{m-1}+\mathbf{z})$ and $\mathbf{U}(\mathbf{\hat
z}_m)$ is a linear space. Taking $\lambda = \lambda_m \pm \gamma$,
with $\gamma\in \Rbb^+$, we obtain for all cases
\begin{align*}
0&\le \frac{1}{\gamma} \left(J(\mathbf{u}_{m-1}+\lambda_m
\mathbf{w}_m\pm \gamma \mathbf{w}_m)- J(\mathbf{u}_{m-1}+\lambda_m
\mathbf{w}_m)\right).
\end{align*}
Taking the limit $\gamma \searrow 0$, we obtain $ 0 \le \pm \langle
J'(\mathbf{u}_{m-1}+\lambda_m \mathbf{w}_m),\mathbf{w}_m\rangle $ and
therefore
$$\langle J'(\mathbf{u}_{m-1}+\lambda_m \mathbf{w}_m),\mathbf{w}_m\rangle = 0,$$
which ends the proof.
\qed \end{proof}

\begin{lemma}\label{lem:progressive_lim_zm}
Assume that $J$ satisfies (A1)-(A2). Then the corrections
$\{\mathbf{z}_m\}_{m \ge 1}$ of a progressive Proper
Generalized Decomposition $\{\mathbf{u}_m\}_{m \ge 1},$ over
$\Sc_1,$  of
$$
\mathbf{u} = \arg \min_{\mathbf{v} \in
\mathbf{V}_{\|\cdot\|}} J(\mathbf{v}),
$$
satisfy
\begin{align} \sum_{m=1}^\infty \Vert  \mathbf{z}_m \Vert^s < \infty,
\text{ for some } s > 1,
\label{eq:summable_zm}
\end{align}
and thus,
\begin{align}\lim_{m\to\infty
} \Vert  \mathbf{z}_m\Vert = 0. \label{eq:lim_zm}
\end{align}
\end{lemma}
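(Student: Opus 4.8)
The plan is to exploit the monotone decrease of $J(\mathbf{u}_m)$ together with the ellipticity of $J$ to produce a telescoping bound. Since $\{J(\mathbf{u}_m)\}$ is non-increasing (Lemma~\ref{lem:J_decrease}) and bounded below by $J(\mathbf{u})$ (by Lemma~\ref{lemma_assumption}(c), $J$ is bounded below), the sequence $\{J(\mathbf{u}_m)\}$ converges, hence $\sum_{m\ge 1}\bigl(J(\mathbf{u}_{m-1})-J(\mathbf{u}_m)\bigr)<\infty$ as a telescoping sum. The whole argument reduces to bounding $\Vert\mathbf{z}_m\Vert^s$ from above by a constant times $J(\mathbf{u}_{m-1})-J(\mathbf{u}_m)$.

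First I would apply the ellipticity consequence \eqref{eq:J_strong_convexity_consequence} from Lemma~\ref{lemma_assumption}(a). The key observation is that in every strategy $c$, $l$, $r$ we have the chain $J(\mathbf{u}_m)\le J(\mathbf{u}_{m-1}+\mathbf{z}_m)$, so it suffices to control $\Vert\mathbf{z}_m\Vert^s$ by $J(\mathbf{u}_{m-1})-J(\mathbf{u}_{m-1}+\mathbf{z}_m)$. Applying \eqref{eq:J_strong_convexity_consequence} with the pair $(v,w)=(\mathbf{u}_{m-1},\,\mathbf{u}_{m-1}+\mathbf{z}_m)$ gives
$$
J(\mathbf{u}_{m-1})-J(\mathbf{u}_{m-1}+\mathbf{z}_m)\ge \langle J'(\mathbf{u}_{m-1}+\mathbf{z}_m),-\mathbf{z}_m\rangle+\frac{\alpha}{s}\Vert\mathbf{z}_m\Vert^s.
$$
Here the crucial simplification is that the first-order term vanishes: by Lemma~\ref{lem:progressive_euler_ortho}, $\langle J'(\mathbf{u}_{m-1}+\mathbf{z}_m),\mathbf{z}_m\rangle=0$ for all $m\ge 1$. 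Thus we obtain the clean inequality
$$
\frac{\alpha}{s}\Vert\mathbf{z}_m\Vert^s\le J(\mathbf{u}_{m-1})-J(\mathbf{u}_{m-1}+\mathbf{z}_m)\le J(\mathbf{u}_{m-1})-J(\mathbf{u}_m).
$$

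To finish, I would sum this inequality over $m$. Since the right-hand side telescopes and $J(\mathbf{u}_m)$ is bounded below by $J(\mathbf{u})$, we get
$$
\frac{\alpha}{s}\sum_{m=1}^\infty\Vert\mathbf{z}_m\Vert^s\le \sum_{m=1}^\infty\bigl(J(\mathbf{u}_{m-1})-J(\mathbf{u}_m)\bigr)=J(\mathbf{u}_0)-\lim_{m\to\infty}J(\mathbf{u}_m)<\infty,
$$
which establishes \eqref{eq:summable_zm}. The convergence of the series immediately forces its general term to zero, yielding \eqref{eq:lim_zm}. I expect the only subtle point to be verifying that the orthogonality relation of Lemma~\ref{lem:progressive_euler_ortho} applies uniformly across the three strategies and that the intermediate inequality $J(\mathbf{u}_m)\le J(\mathbf{u}_{m-1}+\mathbf{z}_m)$ holds in each case; both are already recorded in the discussion preceding Lemma~\ref{lem:J_decrease}, so the proof is essentially a direct assembly of the ellipticity estimate, the vanishing first-order term, and the telescoping sum, with no genuine obstacle remaining.
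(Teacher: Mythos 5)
Your proof is correct and follows essentially the same route as the paper: apply the ellipticity estimate \eqref{eq:J_strong_convexity_consequence} to the pair $(\mathbf{u}_{m-1},\mathbf{u}_{m-1}+\mathbf{z}_m)$, kill the first-order term via Lemma~\ref{lem:progressive_euler_ortho}, use $J(\mathbf{u}_m)\le J(\mathbf{u}_{m-1}+\mathbf{z}_m)$, and telescope. No gaps.
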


\begin{proof}
By the ellipticity property \eqref{eq:J_strong_convexity}, we have
\begin{align}
J(\mathbf{u}_{m-1})-J(\mathbf{u}_{m-1}+\mathbf{z}_m) &\ge \langle -
J'(\mathbf{u}_{m-1}+\mathbf{z}_m),\mathbf{z}_m\rangle +
\frac{\alpha}{s} \Vert \mathbf{z}_m \Vert^s \nonumber
\end{align}
for some $s > 1$ and $\alpha > 0.$
Using Lemma \ref{lem:progressive_euler_ortho} and
$J(\mathbf{u}_m)\le J(\mathbf{u}_{m-1}+\mathbf{z}_m)$, we then
obtain
\begin{align}
J(\mathbf{u}_{m-1})-J(\mathbf{u}_{m}) &\ge \frac{\alpha}{s} \Vert
 \mathbf{z}_m \Vert^s\label{eq:zms}
\end{align}
Now, summing on $m$, and using $\lim_{m\to\infty } J(\mathbf{u}_m) =
J^* <\infty $, we obtain
\begin{align*}
\frac{\alpha}{s} \sum_{m=1}^\infty  \Vert  \mathbf{z}_m \Vert^s &\le
\sum_{m=1}^\infty (J(\mathbf{u}_{m-1})-J(\mathbf{u}_{m}))  = J(0) -
J^* < +\infty.
\end{align*}
which implies $\lim_{m \rightarrow \infty}\Vert  \mathbf{z}_m
\Vert^s =0.$ The continuity of the map $x \mapsto x^{1/s}$ at $x=0$
proves (\ref{eq:lim_zm}).
\qed \end{proof}

\begin{lemma}\label{lem:convergence_maj}
Assume that $J$ satisfies (A1)-(A3). Then for every progressive
Proper Generalized Decompositions $\{\mathbf{u}_m\}_{m \ge 1},$ over
$\Sc_1,$  of $\mathbf{u} = \arg \min_{\mathbf{v} \in
\mathbf{V}_{\|\cdot\|}} J(\mathbf{v})$,  there exists $C >0$ such
that for $m\ge 1,$
$$ \vert \langle J'(\mathbf{u}_{m-1}),\mathbf{z}\rangle \vert \leqslant C \Vert  \mathbf{z}_m
\Vert \Vert \mathbf{z}\Vert,
$$
for all $\mathbf{z} \in \Sc_1$.
\end{lemma}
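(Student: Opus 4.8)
The goal is to bound $\langle J'(\mathbf{u}_{m-1}),\mathbf{z}\rangle$ for arbitrary $\mathbf{z}\in\Sc_1$ in terms of $\Vert\mathbf{z}_m\Vert$. The natural starting point is the orthogonality relation from Lemma \ref{lem:progressive_euler_ortho}, namely $\langle J'(\mathbf{u}_{m-1}+\mathbf{z}_m),\mathbf{z}_m\rangle=0$, together with the optimality of $\mathbf{\hat z}_m$, which is where the set $\Sc_1$ enters. The plan is to compare $J'(\mathbf{u}_{m-1})$ against $J'(\mathbf{u}_{m-1}+\mathbf{z}_m)$ using the Lipschitz assumption (A3), and to extract information about $\langle J'(\mathbf{u}_{m-1}),\mathbf{z}\rangle$ from the minimality condition satisfied by the correction step.

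First I would write, for any $\mathbf{z}\in\Sc_1$, the decomposition
$$
\langle J'(\mathbf{u}_{m-1}),\mathbf{z}\rangle = \langle J'(\mathbf{u}_{m-1})-J'(\mathbf{u}_{m-1}+\mathbf{z}_m),\mathbf{z}\rangle + \langle J'(\mathbf{u}_{m-1}+\mathbf{z}_m),\mathbf{z}\rangle.
$$
The first term is controlled directly: since $\{\mathbf{u}_m\}$ is bounded (by coercivity, Lemma \ref{lemma_assumption}(c), as $J(\mathbf{u}_m)$ decreases), there is a fixed bounded set $A$ containing all the $\mathbf{u}_{m-1}$ and $\mathbf{u}_{m-1}+\mathbf{z}_m$, so (A3) gives $\Vert J'(\mathbf{u}_{m-1})-J'(\mathbf{u}_{m-1}+\mathbf{z}_m)\Vert\le C_A\Vert\mathbf{z}_m\Vert$, whence the first term is at most $C_A\Vert\mathbf{z}_m\Vert\,\Vert\mathbf{z}\Vert$ by the duality pairing bound. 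The remaining task is to show the second term $\langle J'(\mathbf{u}_{m-1}+\mathbf{z}_m),\mathbf{z}\rangle$ either vanishes or is itself bounded by a multiple of $\Vert\mathbf{z}_m\Vert\,\Vert\mathbf{z}\Vert$.

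The main obstacle is precisely this second term, and here I expect to use the optimality of $\mathbf{\hat z}_m$ over all of $\Sc_1$. For the cases $\boldsymbol{\alpha}_m=c,l$ we have $\mathbf{z}_m=\mathbf{\hat z}_m$ minimizing $J(\mathbf{u}_{m-1}+\cdot)$ over $\Sc_1$; by the same directional-derivative argument as in Lemma \ref{lem:carac_sol_S1}, using that $\Sc_1$ is scaling-invariant (B2), one gets $\langle J'(\mathbf{u}_{m-1}+\mathbf{z}_m),\mathbf{z}\rangle=0$ for every $\mathbf{z}\in\Sc_1$, so the second term is zero. The delicate case is $\boldsymbol{\alpha}_m=r$, where $\mathbf{z}_m$ only minimizes over the subspace $\mathbf{U}(\mathbf{\hat z}_m)$, so the stationarity $\langle J'(\mathbf{u}_{m-1}+\mathbf{z}_m),\mathbf{z}\rangle=0$ holds only for $\mathbf{z}\in\mathbf{U}(\mathbf{\hat z}_m)$, not for general $\mathbf{z}\in\Sc_1$. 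To handle this I would instead exploit the optimality of $\mathbf{\hat z}_m$ in $\Sc_1$ before the update and relate $J'(\mathbf{u}_{m-1}+\mathbf{z}_m)$ back to $J'(\mathbf{u}_{m-1}+\mathbf{\hat z}_m)$: since $\mathbf{\hat z}_m\in\Sc_1$ is a global $\Sc_1$-minimizer, $\langle J'(\mathbf{u}_{m-1}+\mathbf{\hat z}_m),\mathbf{z}\rangle=0$ for all $\mathbf{z}\in\Sc_1$, and then a further Lipschitz comparison between $J'(\mathbf{u}_{m-1}+\mathbf{z}_m)$ and $J'(\mathbf{u}_{m-1}+\mathbf{\hat z}_m)$, costing $C_A\Vert\mathbf{z}_m-\mathbf{\hat z}_m\Vert$, absorbs the difference. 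The one thing to verify carefully is that $\Vert\mathbf{z}_m-\mathbf{\hat z}_m\Vert$ is itself controlled by $\Vert\mathbf{z}_m\Vert$ (or $\Vert\mathbf{\hat z}_m\Vert$), which should follow since both lie in a comparable regime via $J(\mathbf{u}_{m-1}+\mathbf{z}_m)\le J(\mathbf{u}_{m-1}+\mathbf{\hat z}_m)$ and the ellipticity estimate \eqref{eq:J_strong_convexity_consequence}. Collecting the two Lipschitz contributions yields the claim with a single constant $C$ depending only on $C_A$, which is uniform in $m$ because $A$ is a fixed bounded set.
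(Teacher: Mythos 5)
Your opening decomposition and the treatment of the first term are fine, but the handling of the second term contains a genuine error. You claim that in the cases $\boldsymbol{\alpha}_m\in\{c,l\}$ the optimality of $\mathbf{\hat z}_m$ over $\Sc_1$ yields, ``by the same directional-derivative argument as in Lemma \ref{lem:carac_sol_S1},'' the full stationarity $\langle J'(\mathbf{u}_{m-1}+\mathbf{z}_m),\mathbf{z}\rangle=0$ for \emph{every} $\mathbf{z}\in\Sc_1$. This is false. The argument in Lemma \ref{lem:carac_sol_S1} works because the minimizer there is $\mathbf{0}$, so the perturbed point $\mathbf{0}+\gamma\mathbf{z}=\gamma\mathbf{z}$ stays in $\Sc_1$ by (B2). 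Here the minimizer is $\mathbf{\hat z}_m\neq\mathbf{0}$, and the perturbation $\mathbf{\hat z}_m+\gamma\mathbf{z}$ need \emph{not} belong to $\Sc_1$, since $\Sc_1$ (rank-one tensors, Tucker sets, \ldots) is a cone but not closed under addition. The only admissible perturbation directions are scalar multiples of $\mathbf{\hat z}_m$ itself, which gives exactly $\langle J'(\mathbf{u}_{m-1}+\mathbf{z}_m),\mathbf{z}_m\rangle=0$ and nothing more --- that is the content of Lemma \ref{lem:progressive_euler_ortho}. Indeed, if your claimed identity held for all $\mathbf{z}\in\Sc_1$, then by density (B3) one would get $J'(\mathbf{u}_{m-1}+\mathbf{z}_m)=\mathbf{0}$, i.e.\ the greedy step would land on the exact solution $\mathbf{u}$ at every iteration, which is absurd. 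The same objection applies to your proposed fix for the case $\boldsymbol{\alpha}_m=r$, which again invokes $\langle J'(\mathbf{u}_{m-1}+\mathbf{\hat z}_m),\mathbf{z}\rangle=0$ for all $\mathbf{z}\in\Sc_1$.

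What the minimality of $\mathbf{z}_m$ over the nonlinear set $\Sc_1$ actually provides is only a one-sided, first-order \emph{inequality} obtained through the convexity of $J$, and this is how the paper proceeds: from $J(\mathbf{u}_{m-1}+\mathbf{z}_m)\le J(\mathbf{u}_{m-1}+\mathbf{z})$ and convexity one gets $\langle J'(\mathbf{u}_{m-1}+\mathbf{z}),\mathbf{z}_m-\mathbf{z}\rangle\le 0$ --- note the derivative is evaluated at the \emph{test point} $\mathbf{u}_{m-1}+\mathbf{z}$, not at $\mathbf{u}_{m-1}+\mathbf{z}_m$. Combining this with the orthogonality $\langle J'(\mathbf{u}_{m-1}+\mathbf{z}_m),\mathbf{z}_m\rangle=0$ and two Lipschitz comparisons gives a bound $C_B(\norm[\mathbf{z}]^2+\norm[\mathbf{z}]\norm[\mathbf{z}_m]+\norm[\mathbf{z}_m]^2)$ on $-\langle J'(\mathbf{u}_{m-1}),\mathbf{z}\rangle$, valid for $\mathbf{z}\in\Sc_1$ with $\norm[\mathbf{z}]\le\sup_m\norm[\mathbf{z}_m]$; one then specializes to $\norm[\mathbf{z}]=\norm[\mathbf{z}_m]$ and rescales using (B2) and the homogeneity of both sides to obtain the stated estimate with $C=3C_B$. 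You would need to restructure your argument along these lines; as written, the central step does not hold.
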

\begin{proof}
Since $J(\mathbf{u}_m)$ converges and since $J$ is coercive,
$\{\mathbf{u}_m\}_{m\ge 1}$ is a bounded sequence. Since $\Vert
\mathbf{z}_m \Vert \rightarrow 0$ as $m\to\infty$ (Lemma
\ref{lem:progressive_lim_zm}), $\{\mathbf{z}_m\}_{m\ge 1}$ is also a
bounded sequence. Let $a>0$ such that $\sup_{m} \norm[\mathbf{u}_m]+
\sup_{m}\norm[\mathbf{z}_m] \le a$ and let $C_B$ be the Lipschitz
continuity constant of $J'$ on the bounded set $B= \{\mathbf{v}\in
\mathbf{V}_{\|\cdot\|}:\norm[\mathbf{v}]\le a\}$. Then
\begin{align*}
-\langle J'(\mathbf{u}_{m-1}),\mathbf{z}\rangle  &= \langle
J'(\mathbf{u}_{m-1}+\mathbf{z})- J'(\mathbf{u}_{m-1})
 ,\mathbf{z}\rangle - \langle J'(\mathbf{u}_{m-1}+\mathbf{z}),\mathbf{z}\rangle
\\
& \le C_B \Vert \mathbf{z} \Vert^2 - \langle
J'(\mathbf{u}_{m-1}+\mathbf{z}),\mathbf{z}\rangle
\end{align*}
for all $\mathbf{z}\in  A = \{\mathbf{z}\in \Sc_1 :
\norm[\mathbf{z}]\le \sup_m \norm[\mathbf{z}_m]\}$. By convexity of
$J$ and since $J(\mathbf{u}_{m-1}+\mathbf{z}_m)\le
J(\mathbf{u}_{m-1}+\mathbf{z})$ for all $\mathbf{z}\in\Sc_1$, we
have
\begin{align*}
\langle J'(\mathbf{u}_{m-1}+\mathbf{z}),\mathbf{z}_m-\mathbf{z}
\rangle \le J(\mathbf{u}_{m-1}+\mathbf{z}_m) -
J(\mathbf{u}_{m-1}+\mathbf{z}) \le 0
\end{align*}
Therefore, for all $\mathbf{z}\in  A$, we have
\begin{align*}
-\langle J'(\mathbf{u}_{m-1}),\mathbf{z}\rangle  &\le
 C_B
\Vert \mathbf{z} \Vert^2 - \langle
J'(\mathbf{u}_{m-1}+\mathbf{z}),\mathbf{z}_m\rangle\\
&\le C_B \Vert \mathbf{z} \Vert^2 - \langle
J'(\mathbf{u}_{m-1}+\mathbf{z}) -
J'(\mathbf{u}_{m-1}+\mathbf{z}_m),\mathbf{z}_m\rangle
\quad \text{\footnotesize (Lemma \ref{lem:progressive_euler_ortho})} \\
&\le C_B \Vert \mathbf{z} \Vert^2 + C_B\Vert \mathbf{z}-\mathbf{z}_m
\Vert \norm[\mathbf{z}_m] \quad \text{\footnotesize (Lemma
\ref{lem:progressive_lim_zm})}
\\
&\le C_B \left(\norm[\mathbf{z}] ^2 +  \norm[\mathbf{z}]
\norm[\mathbf{z}_m] + \norm[\mathbf{z}_m]^2\right)
\end{align*}
Let $\mathbf{z}=\mathbf{w} \norm[\mathbf{z}_m] \in A$, with
$\norm[\mathbf{w}] = 1$. Then
\begin{align*}
\vert \langle J'(\mathbf{u}_{m-1}),\mathbf{w}\rangle \vert  & \le
3C_B \norm[\mathbf{z}_m] \quad \forall \mathbf{w}\in\{\mathbf{w}\in
\Sc_1:\norm[\mathbf{w}] = 1\}
\end{align*}
Taking $\mathbf{w}=\mathbf{z}/\norm[\mathbf{z}]$, with
$\mathbf{z}\in \Sc_1$, and $C=3C_B > 0$ we obtain
 \begin{align*}
\vert \langle J'(\mathbf{u}_{m-1}),\mathbf{z}\rangle \vert  &\le C
\norm[\mathbf{z}_m]\norm[\mathbf{z}] \quad \forall
\mathbf{z}\in\Sc_1
\end{align*}
\qed \end{proof}

Since $\mathbf{V}_{\|\cdot\|}$ is reflexive, we can identify
$\mathbf{V}_{\|\cdot\|}^{**}$ with $\mathbf{V}_{\|\cdot\|}$ and the
duality pairing $\langle \cdot, \cdot
\rangle_{\mathbf{V}_{\|\cdot\|}^{**},\mathbf{V}_{\|\cdot\|}^*}$ with
$\langle \cdot, \cdot
\rangle_{\mathbf{V}_{\|\cdot\|}^*,\mathbf{V}_{\|\cdot\|}}$
(\textit{i.e.} weak and weak-$*$ topologies coincide on
$\mathbf{V}_{\|\cdot\|}^*$).

\begin{lemma}\label{lem:convergence_Jprime}
Assume that $J$ satisfies (A1)-(A3). Then for every
progressive Proper Generalized Decomposition $\{\mathbf{u}_m\}_{m
\ge 1},$ over $\Sc_1,$  of $\mathbf{u} = \arg \min_{\mathbf{v} \in
\mathbf{V}_{\|\cdot\|}} J(\mathbf{v})$, the sequence
$\{J'(\mathbf{u}_m)\}_{m\in\Nbb}$ weakly-$*$ converges to
$\mathbf{0}$ in $\mathbf{V}_{\|\cdot\|}^*,$  that is, $
\lim_{m\rightarrow \infty}\langle J'(\mathbf{u}_{m}), \mathbf{z}
\rangle = 0 $ for all $\mathbf{z}$ in a dense subset of
$\mathbf{V}_{\|\cdot\|}.$
\end{lemma}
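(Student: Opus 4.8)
The plan is to combine the two preceding lemmas directly. First I would invoke Lemma~\ref{lem:convergence_maj}, which furnishes a constant $C > 0$ with $|\langle J'(\mathbf{u}_{m-1}), \mathbf{z} \rangle| \le C \|\mathbf{z}_m\| \|\mathbf{z}\|$ for every $\mathbf{z} \in \Sc_1$ and every $m \ge 1$. Since Lemma~\ref{lem:progressive_lim_zm} guarantees $\|\mathbf{z}_m\| \to 0$ as $m \to \infty$, fixing any $\mathbf{z} \in \Sc_1$ and letting $m \to \infty$ in this inequality yields $\langle J'(\mathbf{u}_{m-1}), \mathbf{z} \rangle \to 0$. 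Because shifting the index by one does not affect the limit, this is the same as $\langle J'(\mathbf{u}_m), \mathbf{z} \rangle \to 0$ for every fixed $\mathbf{z} \in \Sc_1$.

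The second step is to pass from $\Sc_1$ to a dense subset of $\mathbf{V}_{\|\cdot\|}$. By assumption (B3), $\mathrm{span}\,\Sc_1$ is dense in $\mathbf{V}_{\|\cdot\|}$, so it is enough to establish the convergence on $\mathrm{span}\,\Sc_1$. Any such $\mathbf{z}$ can be written as a finite combination $\mathbf{z} = \sum_{i=1}^n \lambda_i \mathbf{w}_i$ with $\lambda_i \in \Rbb$ and $\mathbf{w}_i \in \Sc_1$. Since each $J'(\mathbf{u}_m)$ is a linear functional, $\langle J'(\mathbf{u}_m), \mathbf{z} \rangle = \sum_{i=1}^n \lambda_i \langle J'(\mathbf{u}_m), \mathbf{w}_i \rangle$, and each of the finitely many terms tends to $0$ by the first step; hence the whole sum tends to $0$. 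This gives the asserted convergence on the dense set $\mathrm{span}\,\Sc_1$.

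There is essentially no hard analytic obstacle here: the statement is a formal consequence of Lemmas~\ref{lem:convergence_maj} and~\ref{lem:progressive_lim_zm} together with the linearity of $J'(\mathbf{u}_m)$ and the density assumption (B3). The only points deserving a moment of care are the harmless index shift from $m-1$ to $m$ and the fact that the extension by linearity works precisely because each $\mathbf{z} \in \mathrm{span}\,\Sc_1$ is a \emph{finite} combination. If one wished to record the genuine weak-$*$ convergence against every $\mathbf{z} \in \mathbf{V}_{\|\cdot\|}$, and not merely on a dense subset, I would additionally note that $\{\mathbf{u}_m\}$ is bounded (as $J(\mathbf{u}_m)$ converges and $J$ is coercive) and that (A3) forces $J'$ to be bounded on bounded sets, so $\{J'(\mathbf{u}_m)\}$ is norm-bounded in $\mathbf{V}_{\|\cdot\|}^*$; a standard density-plus-uniform-boundedness argument then upgrades the convergence to all of $\mathbf{V}_{\|\cdot\|}$.
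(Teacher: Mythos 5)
Your argument is correct, and it reaches the conclusion by a more direct route than the paper. The paper's own proof first shows that $\{J'(\mathbf{u}_m)\}$ is bounded in $\mathbf{V}_{\|\cdot\|}^*$ (via boundedness of $\{\mathbf{u}_m\}$ and the Lipschitz property of $J'$, together with $J'(\mathbf{u})=\mathbf{0}$), then uses reflexivity of $\mathbf{V}_{\|\cdot\|}^*$ to extract, from any subsequence, a further weakly-$*$ convergent subsequence with limit $\boldsymbol{\varphi}$; the estimate of Lemma~\ref{lem:convergence_maj} combined with Lemma~\ref{lem:progressive_lim_zm} forces $\langle \boldsymbol{\varphi},\mathbf{z}\rangle=0$ on $\Sc_1$, hence $\boldsymbol{\varphi}=\mathbf{0}$ by (B3), and the subsequence principle yields convergence of the whole sequence. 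You bypass the compactness argument entirely: the bound $|\langle J'(\mathbf{u}_{m-1}),\mathbf{z}\rangle|\le C\Vert\mathbf{z}_m\Vert\,\Vert\mathbf{z}\Vert$ already gives pointwise convergence to $0$ for each fixed $\mathbf{z}\in\Sc_1$, and linearity of $J'(\mathbf{u}_m)$ extends this to the dense set $\mathrm{span}\,\Sc_1$, which is exactly what the lemma as stated asks for. Your approach is the more elementary one (it never invokes reflexivity of the dual or subsequence extraction), and your closing remark correctly identifies the one extra ingredient --- uniform boundedness of $\{J'(\mathbf{u}_m)\}$ in $\mathbf{V}_{\|\cdot\|}^*$, which does follow from (A3) and the boundedness of $\{\mathbf{u}_m\}$ --- needed to upgrade convergence on a dense subset to genuine weak-$*$ convergence against every element of $\mathbf{V}_{\|\cdot\|}$; that is precisely the role the boundedness estimate plays in the paper's version as well. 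The only care point, the index shift from $\Vert\mathbf{z}_m\Vert$ to $\Vert\mathbf{z}_{m+1}\Vert$, you handle correctly.
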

\begin{proof}
The sequence $\{\mathbf{u}_m\}_{m\in \Nbb}$ being bounded, and since
$J'$ is Lipschitz continuous on bounded sets, we have that there
exists a constant $C>0$ such that
$$
\Vert J'(\mathbf{u}_m)\Vert = \Vert J'(\mathbf{u}_m)
-J'(\mathbf{u})\Vert \le C \Vert \mathbf{u}-\mathbf{u}_m\Vert
$$
That proves that $\{J'(\mathbf{u}_m)\}\subset
\mathbf{V}_{\|\cdot\|}^*$ is a bounded sequence. Since
$\mathbf{V}_{\|\cdot\|}^*$ is also reflexive, from any subsequence
of $\{J'(\mathbf{u}_m)\}_{m\in \Nbb}$, we can extract a further
subsequence $\{J'(\mathbf{u}_{m_k})\}_{k\in \Nbb}$ that weakly-$*$
converges to an element $ \boldsymbol{\varphi} \in
\mathbf{V}_{\|\cdot\|}^*$. By using Lemma \ref{lem:convergence_maj},
we have for all $\mathbf{z}\in \Sc_1$,
$$
\vert \langle J'(\mathbf{u}_{m_k}),\mathbf{z}\rangle \vert  \leq
C \Vert \mathbf{z}_{m_k+1} \Vert \Vert \mathbf{z}\Vert.
$$
Taking the limit with $k$, and using Lemma
\ref{lem:progressive_lim_zm}, we obtain
$$
\langle \boldsymbol{\varphi},\mathbf{z}\rangle = 0\quad \forall\mathbf{z}\in \Sc_1,
$$
By using assumption (B3), we conclude that
$\boldsymbol{\varphi}=\mathbf{0}$. Since from any subsequence of the
initial sequence $\{J'(\mathbf{u}_m)\}_{m\in\Nbb}$ we can extract a
further subsequence that weakly-$*$ converges to the same limit
$\mathbf{0}$, then the whole sequence converges to $\mathbf{0}.$
\qed \end{proof}

\begin{lemma}\label{lem:convergence_Jprime_bis}
Assume that $J$ satisfies (A1)-(A3) and consider a progressive
Proper Generalized Decomposition $\{\mathbf{u}_m\}_{m \ge 1},$ over
$\Sc_1,$  of $\mathbf{u} = \arg \min_{\mathbf{v} \in
\mathbf{V}_{\|\cdot\|}} J(\mathbf{v})$ such that for the ellipticity
constant $s$ of $J$ and $\underline{\boldsymbol{\alpha}}(\mathbf{u})
= \boldsymbol{\alpha}_1 \cdots \boldsymbol{\alpha}_{m} \cdots,$ one
of the two following conditions hold:
\begin{itemize}
\item[(a)] $s>1$ and there exists a subsequence
$\{\boldsymbol{\alpha}_{m_k}\}_{k \in \mathbb{N}}$ such that
$\boldsymbol{\alpha}_{m_k} = l$ for all $k \ge 1.$
\item[(b)] $1< s \le 2 $ and there exists $k \ge 1$ such that
$\underline{\boldsymbol{\alpha}}(\mathbf{u})
=\boldsymbol{\alpha}_1 \cdots \boldsymbol{\alpha}_{k-1}
\boldsymbol{\alpha}^{\infty}$ where $\boldsymbol{\alpha} \in  \{c,r\}.$
\end{itemize}
Then, there exists a subsequence $\{\mathbf{u}_{m_k}\}_{k\in \Nbb}$
such that
$$
\langle J'(\mathbf{u}_{m_k}),\mathbf{u}_{m_k} \rangle \rightarrow 0.
$$
\end{lemma}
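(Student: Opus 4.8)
The plan is to treat the two hypotheses by completely different mechanisms, since (a) yields the conclusion through an \emph{exact} orthogonality relation, whereas (b) requires a quantitative summability argument.

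\emph{Case (a).} Here I expect the argument to be immediate and to hold termwise. For each index $m_k$ with $\boldsymbol{\alpha}_{m_k}=l$, the iterate $\mathbf{u}_{m_k}$ is by construction the minimizer of $J$ over the \emph{closed linear subspace} $\mathbf{U}(\mathbf{u}_{m_k-1}+\mathbf{z}_{m_k})$, and $\mathbf{u}_{m_k}$ lies in that subspace. Writing the first-order optimality condition for an unconstrained minimization over a subspace, Fr\'echet differentiability (A1) gives $\langle J'(\mathbf{u}_{m_k}),\mathbf{w}\rangle=0$ for every $\mathbf{w}$ in the subspace; choosing $\mathbf{w}=\mathbf{u}_{m_k}$ yields $\langle J'(\mathbf{u}_{m_k}),\mathbf{u}_{m_k}\rangle=0$ for all $k$, so the stated limit holds with each term vanishing exactly.

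\emph{Case (b).} From some index $k$ on, all symbols equal a fixed $\boldsymbol{\alpha}\in\{c,r\}$, hence $\mathbf{u}_m=\mathbf{u}_{m-1}+\mathbf{z}_m$ and $\mathbf{u}_m=\mathbf{u}_{k-1}+\sum_{i=k}^m\mathbf{z}_i$ for $m\ge k$. The target reduces to $\liminf_m\langle J'(\mathbf{u}_m),\mathbf{u}_m\rangle=0$, after which a subsequence is extracted. I would first split $\langle J'(\mathbf{u}_m),\mathbf{u}_m\rangle=\langle J'(\mathbf{u}_m),\mathbf{u}_{k-1}\rangle+\sum_{i=k}^m\langle J'(\mathbf{u}_m),\mathbf{z}_i\rangle$; the fixed-vector term tends to $0$ by the weak-$*$ convergence $J'(\mathbf{u}_m)\rightharpoonup^*\mathbf{0}$ of Lemma~\ref{lem:convergence_Jprime}, and the $i=m$ term vanishes by Lemma~\ref{lem:progressive_euler_ortho}. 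In the sub-case $\boldsymbol{\alpha}=c$ one has $\mathbf{z}_i\in\Sc_1$, so Lemma~\ref{lem:convergence_maj} (with a shifted index) gives $|\langle J'(\mathbf{u}_m),\mathbf{z}_i\rangle|\le C\Vert\mathbf{z}_{m+1}\Vert\,\Vert\mathbf{z}_i\Vert$, whence the sum is bounded by $C\Vert\mathbf{z}_{m+1}\Vert\sum_{i=k}^m\Vert\mathbf{z}_i\Vert$. The remaining purely real-analysis step is the elementary lemma: if $a_i\ge0$ with $\sum_i a_i^{s}<\infty$ and $1<s\le 2$, then $\liminf_m a_{m+1}\sum_{i\le m}a_i=0$. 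This I would prove by treating the cases $S_m:=\sum_{i\le m}a_i$ bounded (trivial, since $a_{m+1}\to0$) and $S_m\to\infty$ (assuming $a_{m+1}S_m\ge c>0$ eventually forces $a_{m+1}\ge c/S_m$, and an integral comparison of $\sum_m a_{m+1}S_m^{1-s}$ with $\int S^{1-s}\,dS$, divergent precisely because $s-1\le1$, contradicts $\sum a_{m}^s<\infty$). Applying this to $a_i=\Vert\mathbf{z}_i\Vert$, guaranteed summable by Lemma~\ref{lem:progressive_lim_zm}, produces the desired subsequence.

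The sub-case $\boldsymbol{\alpha}=r$ is where I expect the real difficulty. Now $\mathbf{z}_i\in\mathbf{U}(\mathbf{\hat z}_i)$ is \emph{not} in $\Sc_1$, so Lemma~\ref{lem:convergence_maj} no longer bounds $\langle J'(\mathbf{u}_m),\mathbf{z}_i\rangle$ termwise. The only substitute I see is the per-step Euler orthogonality $\langle J'(\mathbf{u}_i),\mathbf{z}_i\rangle=0$ together with the Lipschitz bound (A3): $\langle J'(\mathbf{u}_m),\mathbf{z}_i\rangle=\langle J'(\mathbf{u}_m)-J'(\mathbf{u}_i),\mathbf{z}_i\rangle$, so $|\langle J'(\mathbf{u}_m),\mathbf{z}_i\rangle|\le C\Vert\mathbf{u}_m-\mathbf{u}_i\Vert\,\Vert\mathbf{z}_i\Vert\le C\big(\sum_{j>i}\Vert\mathbf{z}_j\Vert\big)\Vert\mathbf{z}_i\Vert$, followed by summation over $i$. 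The obstacle is that this naive estimate only yields a double sum of order $\big(\sum_i\Vert\mathbf{z}_i\Vert\big)^2$, which does \emph{not} vanish along any subsequence when $\sum_i\Vert\mathbf{z}_i\Vert=\infty$ (only $\sum_i\Vert\mathbf{z}_i\Vert^{s}<\infty$ is available, and $s\le2$). To close this gap I would exploit that $\{\langle J'(\mathbf{u}_m),\mathbf{u}_m\rangle\}$ is bounded -- a consequence of the boundedness of the iterates and $\Vert J'(\mathbf{u}_m)\Vert=\Vert J'(\mathbf{u}_m)-J'(\mathbf{u})\Vert\le C\Vert\mathbf{u}_m-\mathbf{u}\Vert$ -- which forces cancellation in the telescoped sum, and combine this with the fixed-tail splitting $\langle J'(\mathbf{u}_m),\mathbf{u}_N-\mathbf{u}_{k-1}\rangle\to0$ at a slowly growing cutoff $N$; the role of $1<s\le2$ is precisely to make this cutoff argument quantitatively match the summability of $\Vert\mathbf{z}_m\Vert^{s}$. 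Controlling this $r$-update double sum, rather than the $c$-case or Case (a), is the step I anticipate being the main technical hurdle.
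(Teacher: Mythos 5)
Your case (a) reproduces the paper's argument exactly: the first-order condition for the unconstrained minimization over the closed subspace $\mathbf{U}(\mathbf{u}_{m_k-1}+\mathbf{z}_{m_k})$, which contains $\mathbf{u}_{m_k}$, gives $\langle J'(\mathbf{u}_{m_k}),\mathbf{u}_{m_k}\rangle=0$ termwise. Your case (b) for $\boldsymbol{\alpha}=c$ is also correct and close in spirit to the paper's: after the bound $\vert\langle J'(\mathbf{u}_m),\mathbf{u}_m\rangle\vert\le C\Vert\mathbf{z}_{m+1}\Vert\sum_{k\le m}\Vert\mathbf{z}_k\Vert$ from Lemma~\ref{lem:convergence_maj}, the paper applies H\"older's inequality, $\sum_{k\le m}\Vert\mathbf{z}_k\Vert\le m^{1/s^*}\bigl(\sum_k\Vert\mathbf{z}_k\Vert^s\bigr)^{1/s}$ with $1/s+1/s^*=1$, and extracts a subsequence along which $m_k\Vert\mathbf{z}_{m_k+1}\Vert^{s^*}\to0$, the restriction $s\le2$ entering as $s\le s^*$; your elementary lemma (if $a_i\ge0$, $\sum_ia_i^s<\infty$ and $1<s\le2$, then $\liminf_m a_{m+1}\sum_{i\le m}a_i=0$, proved by comparison with $\int S^{1-s}\,dS$) is a correct substitute that isolates the role of $s\le2$ equally cleanly. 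Your handling of the initial segment $\mathbf{u}_{k-1}$ via Lemma~\ref{lem:convergence_Jprime} is fine; the paper simply discards it ``without loss of generality.''

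The genuine gap is your sub-case $\boldsymbol{\alpha}=r$, which you leave open. The paper does not distinguish $r$ from $c$ here at all: in both it writes $\mathbf{u}_m=\sum_{k=1}^m\mathbf{z}_k$ and applies Lemma~\ref{lem:convergence_maj} to each $\mathbf{z}_k$, obtaining the same bound $C\Vert\mathbf{z}_{m+1}\Vert\sum_k\Vert\mathbf{z}_k\Vert$. You are right that this tacitly uses $\mathbf{z}_k\in\Sc_1$, which the bare definition of an $r$-update does not literally force; but in every update space the paper actually proposes for the $r$-strategy --- $\mathrm{span}\{\mathbf{\hat z}_k\}$, the minimal-subspace tensor space $\left._a\bigotimes_{j}U_{j,\min}(\mathbf{\hat z}_k)\right.$ when $\Sc_1=\Tc_{\mathbf{r}}$, or the single-dimension update of a rank-one tensor --- one has $\mathbf{U}(\mathbf{\hat z}_k)\subset\Sc_1$, so $\mathbf{z}_k\in\Sc_1$ and your $c$-argument applies verbatim. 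Having spotted the issue, you should either have recorded $\mathbf{U}(\mathbf{\hat z}_k)\subset\Sc_1$ as the (implicit) standing hypothesis and closed the case in one line, or supplied a genuine alternative. What you sketch instead does not close it: as you compute yourself, $\vert\langle J'(\mathbf{u}_m),\mathbf{z}_i\rangle\vert\le C\bigl(\sum_{j>i}\Vert\mathbf{z}_j\Vert\bigr)\Vert\mathbf{z}_i\Vert$ only yields a quantity of order $\bigl(\sum_i\Vert\mathbf{z}_i\Vert\bigr)^2$, which need not vanish under $\sum_i\Vert\mathbf{z}_i\Vert^s<\infty$, and the subsequent appeal to ``cancellation'' and a ``slowly growing cutoff'' exhibits no cutoff and derives no estimate. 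As written, condition (b) with $\boldsymbol{\alpha}=r$ is asserted but not proved.
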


\begin{proof}
First, assume that condition (a) holds. Recall that if
$\boldsymbol{\alpha}_m = l$ for some $m \ge 1$, the $\mathbf{u}_{m}$
is obtained by the minimization of $J$ on the closed subspace
$\mathbf{U}(\mathbf{u}_{m-1} + \mathbf{z}_{m}) \subset
\mathbf{V}_{\|\cdot\|}$. The global minimum is attained and unique,
and it is characterized by
$\left<J'(\mathbf{u}_m),\mathbf{v}\right>=0$ for all $\mathbf{v}\in
\mathbf{U}(\mathbf{u}_{m-1} + \mathbf{z}_m)$. Thus, under condition
$(a)$, there exists a subsequence such that $\langle
J'(\mathbf{u}_{m_k}),\mathbf{u}_{m_k}\rangle=0$ for all $k \ge 1$.
Now, we consider that statement (b) holds. Without loss of
generality we may assume that
$\underline{\boldsymbol{\alpha}}(\mathbf{u}) =
\boldsymbol{\alpha}^{\infty}$ where $\boldsymbol{\alpha} \in
\{c,r\}.$ In both cases, $\mathbf{u}_m = \sum_{k=1}^m \mathbf{z}_k.$
Thus, we have
\begin{align*}
\vert\langle J'(\mathbf{u}_{m}),\mathbf{u}_{m} \rangle \vert & \le  \sum_{k=1}^m
\vert \langle J'(\mathbf{u}_{m}),\mathbf{z}_k \rangle \vert\\
&\le  C \sum_{k=1}^m \Vert \mathbf{z}_{m+1}\Vert \Vert \mathbf{z}_k \Vert \quad
\text{(By Lemma \ref{lem:convergence_maj})}.
\end{align*}
Let $s^*>1$ be such that $1/s^*+1/s =
1$. By Holder's inequality, we have
\begin{align}
\vert\langle J'(\mathbf{u}_{m}),\mathbf{u}_{m} \rangle \vert  &\le  C \Vert \mathbf{z}_{m+1}\Vert
m^{1/s^*}\left( \sum_{k=1}^m \Vert  \mathbf{z}_k \Vert^s\right)^{1/s} \nonumber
\\
&= C \left( m\Vert \mathbf{z}_{m+1}\Vert^{s^*} \right)^{1/s^*} \left(
\sum_{k=1}^m \Vert \mathbf{z}_k \Vert^s\right)^{1/s}. \label{ahora}
\end{align}
From Lemma \ref{lem:progressive_lim_zm}, we have $
\sum_{k=1}^{\infty} \Vert\mathbf{z}_k \Vert^s <\infty.$ Then there
exists a subsequence such that $m_k\Vert
\mathbf{z}_{m_k+1}\Vert^{s}\rightarrow 0.$ For $1<s\le 2$, we have
$s\le s^*$. Since $\lim_{k\to\infty} \Vert \mathbf{z}_ {k} \Vert
=0$, we have $\Vert \mathbf{z}_{k}\Vert^{s^*} \le \Vert
\mathbf{z}_{k}\Vert^{s}$ for $k$ large enough, and therefore we also
have $m_k\Vert \mathbf{z}_{m_k+1}\Vert^{s^*}\rightarrow 0$, which
from \eqref{ahora} ends the proof of the lemma.
\qed \end{proof}

\begin{theorem}\label{th:convergence}
Assume that $J$ satisfies (A1)-(A3) and consider a progressive
Proper Generalized Decomposition $\{\mathbf{u}_m\}_{m \ge 1},$ over
$\Sc_1,$  of $\mathbf{u} = \arg \min_{\mathbf{v} \in
\mathbf{V}_{\|\cdot\|}} J(\mathbf{v})$ such that the ellipticity
constant $s$ of $J$ and
$\underline{\boldsymbol{\alpha}}(\mathbf{u})$ satisfy one of the
following conditions:
\begin{itemize}
\item[(a)] $s>1$ and there exists a subsequence $\{\boldsymbol{\alpha}_{m_k}\}_{k \in \mathbb{N}}$
such that $\boldsymbol{\alpha}_{m_k} = l$ for all $k \ge 1.$
\item[(b)] $1< s \le 2 $ and there exists $k \ge 1$ such that
$\underline{\boldsymbol{\alpha}}(\mathbf{u}) =\boldsymbol{\alpha}_1
\cdots \boldsymbol{\alpha}_{k-1} \boldsymbol{\alpha}^{\infty}$ where
$\boldsymbol{\alpha} \in  \{c,r\}.$
\item[(c)]$s>1$ and $\underline{\boldsymbol{\alpha}}(\mathbf{u})$ is finite.
\end{itemize}
Then $\{\mathbf{u}_m\}_{m \ge 1},$ converges in
$\mathbf{V}_{\|\cdot\|}$ to $\mathbf{u},$ that is,
$$
\lim_{m\rightarrow\infty}\Vert \mathbf{u} -\mathbf{u}_m\Vert = 0.
$$
\end{theorem}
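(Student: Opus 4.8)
The plan is to reduce the strong convergence $\mathbf{u}_m \to \mathbf{u}$ to the convergence of energies $J(\mathbf{u}_m) \to J(\mathbf{u})$, which by Lemma~\ref{lem:strong_convergence} is sufficient. From Lemma~\ref{lem:J_decrease}, the sequence $\{J(\mathbf{u}_m)\}$ is non-increasing and bounded below by $J(\mathbf{u})$, hence it converges to some limit $J^* \ge J(\mathbf{u})$; the entire task is therefore to establish $J^* = J(\mathbf{u})$. Case (c) is immediate: a finite symbol sequence $\underline{\boldsymbol{\alpha}}(\mathbf{u})$ means, by the remark following Lemma~\ref{lem:J_decrease}, that $J(\mathbf{u}_{m}) = J(\mathbf{u}_{m-1})$ for some $m$, so $\mathbf{u}_{m-1}$ already solves \eqref{eq:prob}; thereafter the optimal correction in $\Sc_1$ vanishes (since $\mathbf{0} \in \Sc_1$), so $\mathbf{u}_n = \mathbf{u}$ for all large $n$.

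For cases (a) and (b), the core computation is an application of the ellipticity consequence from Lemma~\ref{lemma_assumption}(a) with $v = \mathbf{u}$ and $w = \mathbf{u}_m$. Since $J(\mathbf{u}) - J(\mathbf{u}_m) \le 0$, this rearranges to
$$
\frac{\alpha}{s}\Vert \mathbf{u} - \mathbf{u}_m\Vert^s \;\le\; \langle J'(\mathbf{u}_m), \mathbf{u}_m\rangle - \langle J'(\mathbf{u}_m), \mathbf{u}\rangle.
$$
I would then control the two terms on the right separately. The term $\langle J'(\mathbf{u}_m), \mathbf{u}\rangle$ tends to $0$ because $J'(\mathbf{u}_m)$ converges weakly-$*$ to $\mathbf{0}$ (Lemma~\ref{lem:convergence_Jprime}); since $\{J'(\mathbf{u}_m)\}$ is bounded in $\mathbf{V}_{\|\cdot\|}^*$, testing against the fixed element $\mathbf{u}$ (rather than only against the dense subset in the lemma statement) is legitimate by a standard approximation argument. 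The diagonal term $\langle J'(\mathbf{u}_m), \mathbf{u}_m\rangle$ is exactly where conditions (a) and (b) enter: Lemma~\ref{lem:convergence_Jprime_bis} furnishes, under either hypothesis, a subsequence $\{m_k\}$ along which $\langle J'(\mathbf{u}_{m_k}), \mathbf{u}_{m_k}\rangle \to 0$.

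Combining these two facts on the subsequence $\{m_k\}$ forces $\Vert \mathbf{u} - \mathbf{u}_{m_k}\Vert \to 0$. Continuity of $J$ (a consequence of Fr\'echet differentiability under (A1)) then gives $J(\mathbf{u}_{m_k}) \to J(\mathbf{u})$, and since the full sequence $\{J(\mathbf{u}_m)\}$ already converges to $J^*$, we conclude $J^* = J(\mathbf{u})$. A final appeal to Lemma~\ref{lem:strong_convergence} upgrades this energy convergence to norm convergence of the \emph{entire} sequence $\mathbf{u}_m \to \mathbf{u}$.

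I expect the main obstacle to lie not in this theorem but in the auxiliary Lemma~\ref{lem:convergence_Jprime_bis}, whose hypotheses (a) and (b) are tailored precisely to annihilate the diagonal term $\langle J'(\mathbf{u}_{m_k}), \mathbf{u}_{m_k}\rangle$; the delicate point there is the H\"older-type balancing of $m^{1/s^*}$ against $\Vert\mathbf{z}_{m+1}\Vert$ using the summability $\sum_m \Vert\mathbf{z}_m\Vert^s < \infty$ from Lemma~\ref{lem:progressive_lim_zm}. Granting that lemma, the present proof is essentially an assembly: the ellipticity inequality sandwiches $\Vert\mathbf{u}-\mathbf{u}_m\Vert^s$ between two quantities both driven to zero, and the non-increasing monotonicity of the energies propagates the subsequential limit to the whole sequence.
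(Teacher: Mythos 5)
Your proof is correct and follows essentially the same route as the paper: reduce to showing $J(\mathbf{u}_m)\to J(\mathbf{u})$, dispatch case (c) via Lemma~\ref{lem:J_decrease}, and use Lemmas~\ref{lem:convergence_Jprime} and~\ref{lem:convergence_Jprime_bis} to kill the two terms $\langle J'(\mathbf{u}_{m_k}),\mathbf{u}_{m_k}\rangle$ and $\langle J'(\mathbf{u}_{m_k}),\mathbf{u}\rangle$ along a subsequence, finishing with Lemma~\ref{lem:strong_convergence}. The only (harmless) cosmetic difference is that you invoke the full ellipticity inequality of Lemma~\ref{lemma_assumption}(a) to bound $\Vert\mathbf{u}-\mathbf{u}_{m_k}\Vert^s$ directly and then pass back to energies by continuity of $J$, whereas the paper uses plain convexity to bound $J(\mathbf{u}_{m_k})-J(\mathbf{u})$ by the same right-hand side and concludes $J^*\le J(\mathbf{u})$ in one step.
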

\begin{proof}
From Lemma \ref{lem:J_decrease}, $\{J(\mathbf{u}_{m})\}$ is a non
increasing sequence. If (c) holds, there exists $m$ such that
$J(\mathbf{u}_{m})=J(\mathbf{u}_{m-1})$ and from Lemma
\ref{lem:J_decrease}, we have $\mathbf{u}_m=\mathbf{u}$, which ends
the proof. Let us now suppose that
$J(\mathbf{u}_m)<J(\mathbf{u}_{m-1})$ for all $m$. $J(\mathbf{u}_m)$
is a strictly decreasing sequence which is bounded below by
$J(\mathbf{u})$. Then, there exists
$$
J^*=\lim_{m\to \infty} J(\mathbf{u}_m) \ge J(\mathbf{u}).
$$
If $J^*=J(\mathbf{u})$, Lemma \ref{lem:strong_convergence} allows
to conclude that $\{\mathbf{u}_m\}$ strongly converges to
$\mathbf{u}$. Therefore, it remains to prove that
$J^*=J(\mathbf{u})$. By the convexity of $J$, we have
\begin{align*}
J(\mathbf{u}_{m})-J(\mathbf{u}) \le \langle
J'(\mathbf{u}_{m}),\mathbf{u}_{m}-\mathbf{u}\rangle = \langle
J'(\mathbf{u}_{m}),\mathbf{u}_{m}\rangle -    \langle
J'(\mathbf{u}_{m}), \mathbf{u}\rangle
\end{align*}
By Lemmas \ref{lem:convergence_Jprime} and
\ref{lem:convergence_Jprime_bis}, we have that there exists a
subsequence $\{\mathbf{u}_{m_k}\}_{k\in\Nbb}$ such that $\langle
J'(\mathbf{u}_{m_k}),\mathbf{u}_{m_k}\rangle\rightarrow 0$ and
$\langle J'(\mathbf{u}_{m_k}), \mathbf{u}\rangle \rightarrow 0$, and
therefore
$$
J^* - J(\mathbf{u}) = \lim_{k\to\infty } J(\mathbf{u}_{m_k}) -
J(\mathbf{u}) \le 0
$$
Since we already had $J^*\ge J(\mathbf{u})$, this yields
$J^*=J(\mathbf{u})$, which ends the proof.
\qed \end{proof}

\section{Examples}\label{sec:examples}

\subsection{On the Singular Value
Decomposition in $L^p$ spaces for $p \ge 2$} A Banach space $V$ is
said to be smooth if for any linearly independent elements $x,y\in
V$, the function $\phi(t)=\|x-ty\|$ is differentiable. A Banach
space is said to be uniformly smooth if its modulus of smoothness
$$
\rho(\tau) = \sup_{\substack{x,y \in V\\
\|x\|=\|y\|=1}}\left\{\frac{\|x+\tau y\|+\|x-\tau y\|}{2}-1
\right\},\quad \tau >0,
$$
satisfies the condition
$$
\lim_{\tau \rightarrow 0}\frac{\rho(\tau)}{\tau}= 0.
$$
In uniformly smooth spaces, and only in such spaces,
the norm is uniformly Fr\'echet differentiable. It can be shown that the $L^p$-spaces  for
$1 < p < \infty$ are uniformly smooth (see Corollary 6.12 in \cite{TMorrison}).

Following section \ref{BochnerS}, we introduce the tensor product of
Lebesgue spaces
$$
L^p_\mu(I_1 \times I_2)= L^p_{\mu_1}(I_1)\otimes_{\Delta_p}
L^p_{\mu_2}(I_2) = L^p_{\mu_1}(I_1, L^p_{\mu_2}(I_2)),
$$
with $p\ge 2$, and $\mu=\mu_1 \otimes \mu_2$ a
finite product measure. Recall that
$$\|\mathbf{v} \|_{\Delta_p} = \left(\int_{I_1 \times I_2} \vert \mathbf{v}(x)\vert^p d\mu(x)\right)^{1/p}
$$
Let $\mathbf{u}$ be a given function in $L^p_\mu(I_1 \times I_2)$. We introduce the
functional $J:L^p_\mu(I_1 \times I_2)\rightarrow \Rbb$ defined by
$$
J(\mathbf{v}) = \frac{1}{p}\norm[\mathbf{v}-\mathbf{u}]_{\Delta_p}^p.
$$
Let $G:L^p_\mu(I_1 \times I_2)\rightarrow \Rbb$ be the functional
given by the $p$-norm $G(\mathbf{v})=\|\mathbf{v}\|_{\Delta_p}.$ It
is well known (see for example page 170 in \cite{RHolmes}) that $G$
is Fr\'echet differentiable, with
$$
G'(\mathbf{v}) = \mathbf{v}\, |\mathbf{v}|^{p-2}\,
\|\mathbf{v}\|_{\Delta_p}^{1-p} \in L^q_{\mu}(I_1 \times I_2),
$$
with $q$ such that $1/q+1/p=1$. We denote by $\mathcal{C}^k$ the set
of $k$-times Fr\'echet differentiable functionals from $L^p_\mu(I_1
\times I_2)$ to $\mathbb{R}.$ Then, if $p$ is an even integer, $G
\in \mathcal{C}^{\infty}.$  Otherwise, when $p$ is not an even
integer, the following statements hold (see \cite{Bonic} and 13.13
in \cite{Kriegl}):
\begin{enumerate}
\item[(a)] If $p$ is an integer, $G$ is $(p- 1)$-times
differentiable with Lipschitzian highest Fr\'echet derivative.
\item[(b)] Otherwise, $G$ is $[p]$-times Fr\'echet  differentiable with highest derivative
being H\"olderian of order $p -[p]$.
\item[(c)] $G$ has no higher H\"older Fr\'echet differentiability properties.
\end{enumerate}

As a consequence we obtain that $G \in \mathcal{C}^2$  for all $p
\ge 2,$ and the functional $J$ is also Fr\'echet differentiable with
Fr\'echet derivative given by $J'(\mathbf{v}) =
G(\mathbf{v}-\mathbf{u})^{p-1}G'(\mathbf{v}-\mathbf{u}),$ that is,
$$
 \langle J'(\mathbf{v}),\mathbf{w} \rangle = \int_{I_1\times I_2}  (\mathbf{v}-\mathbf{u})\,
 \vert \mathbf{v} -\mathbf{u} \vert^{p-2} \, \mathbf{w} \, d\mu.
$$
Thus, $J$ satisfies assumption (A1).
 It is well-known that if a functional $F:V \longrightarrow W,$
where $V$ and $W$ are Banach spaces, is Fr\'echet differentiable at
$v \in V$, then it is also locally Lipschitz continuous at $v \in
V.$ Thus, if $p \ge 2$, we have that $J' \in \mathcal{C}^{1},$ and
as a consequence $J'$ satisfies (A3).

Finally, in order to prove the convergence of the (updated)
progressive PGD for each $\mathbf{u}\in L^p_\mu(I_1 \times I_2)$
over $\Sc_1 = \mathcal{T}_{(r_1,r_2)}(L^p_{\mu_1}(I_1)\otimes_{a}
L^p_{\mu_2}(I_2)),$ where $(r_1,r_2) \in \mathbb{N}^2,$ we have to
verify that (A2) on $J$ is satisfied. Since there exists a constant
$\alpha_p>0$ such that for all $s,t\in\Rbb$,
$$
(\vert s\vert^{p-2}s - \vert t\vert^{p-2}t)(s-t) \ge \alpha_p\vert
s-t\vert^p
$$
(see for example (7.1) in \cite{CAN05}), then, for all
$\mathbf{v},\mathbf{w}\in  L^p_\mu(I_1 \times I_2)$,
$$
\langle J'(\mathbf{v})-J'(\mathbf{w}),\mathbf{v}-\mathbf{w}\rangle \ge \alpha_p
\Vert \mathbf{v}-\mathbf{w}\Vert^p,
$$
which proves the ellipticity property of $J,$ and assumption (A2)
holds.
\par From Theorem \ref{th:convergence}, we conclude that the (updated)
progressive Proper Generalized Decomposition converges
\begin{itemize}
\item for all $p\ge 2$ if conditions
$(a)$ or $(c)$ of Theorem \ref{th:convergence} hold.
\item for $p=2$ if condition (b) of Theorem \ref{th:convergence}
holds.
\end{itemize}

Let us detail the application of the progressive PGD over
$$\Sc_1 =
\mathcal{T}_{(r_1,r_2)}(L^p_{\mu_1}(I_1)\otimes_{a}
L^p_{\mu_2}(I_2)).$$ We claim that in dimension $d=2$, we can only
consider the case $r_1=r_2=r$. The claim follows from the fact that
(see \cite{FalcoHackbusch}) for each $\mathbf{v} \in
L^p_{\mu_1}(I_1)\otimes_{a} L^p_{\mu_2}(I_2),$ there exist two
minimal subspaces $U_{j,\min}(\mathbf{v}),$ $j=1,2$, with $\dim
U_{1,\min}(\mathbf{v}) = \dim U_{2,\min}(\mathbf{v})$ and such that
$\mathbf{v} \in U_{1,\min}(\mathbf{v}) \otimes_a
U_{2,\min}(\mathbf{v}).$ In consequence, for a fixed  $r \in
\mathbb{N}$ and for
$$
\mathbf{u} \in L^p_{\mu}(I_1 \times I_2) \setminus
L^p_{\mu_1}(I_1)\otimes_{a} L^p_{\mu_2}(I_2)
$$
we let
$$
\mathbf{u}_1 \in \arg \min_{\mathbf{z} \in
\mathcal{T}_{(r,r)}(L^p_{\mu_1}(I_1)\otimes_{a}
L^p_{\mu_2}(I_2))}J(\mathbf{z}).
$$
Then there exist two bases $\{u_1^{(j)},\ldots,u_r^{(j)}\} \subset
L^p_{\mu_j}(I_j)$ of $U_{j,\min}(\mathbf{v}),$ for $j=1,2,$ such
that
$$
\mathbf{u}_1 = \sum_{k=1}^{r} \sum_{l=1}^{r}\sigma_{k,l} \,
u_k^{(1)}\otimes u_l^{(2)},
$$
and $\mathbf{u}-\mathbf{u}_1 \notin L^p_{\mu_1}(I_1)\otimes_{a}
L^p_{\mu_2}(I_2).$
 Proceeding inductively we can write
$$
\mathbf{u}_m = \sum_{k=1}^{mr} \sum_{l=1}^{mr}\sigma_{k,l} \,
u_k^{(1)}\otimes u_l^{(2)}.
$$
At step $m$, an example of update of type $\boldsymbol{\alpha}_m =
r$ would consist in updating the coefficients $\{\sigma_{k,l}:k,l\in
\{(m-1)r+1,\ldots, mr\}\}$. An example of update of type
$\boldsymbol{\alpha}_m = l$ would consist in updating the whole set
of coefficients $\{\sigma_{k,l}:k,l\in\{1,\ldots,mr\}\}$.
\par In the case $p=2$ and when we take orthonormal bases, it
corresponds to the classical SVD decomposition in the Hilbert space
$L^2_\mu(I_1 \times I_2).$ In this case we have
$$
\mathbf{u}_m = \sum_{j=1}^{mr}\sigma_{j} \,  u_j^{(1)}\otimes
u_j^{(2)}.
$$
where
$\sigma_j = |\langle \mathbf{u},u_1^{(j)}\otimes u_2^{(j)}  \rangle|,$
for $1 \le j \le mr.$

In this sense, the progressive PGD can be interpreted as a SVD
decomposition of a function $\mathbf{u}$ in a $L^p$-space where $p
\ge 2$. Let us recall that for $p>2$, an update strategy of type
$(l)$ is required for applying Theorem \ref{th:convergence} (at
least for a subsequence of iterates).
\par
The above results can be
naturally extended to tensor product of Lebesgue spaces, $
\overline{{}_a\otimes_{k=1}^d
L^p_{\mu_k}(I_k)}^{\|\cdot\|_{\Delta_p}}$ with $d>2$
and $\Sc_1 = \mathcal{R}_{1}\left(\left._{a}%
\bigotimes_{k=1}^{d}L^p_{\mu_k}(I_{k})\right.\right),$ leading
to a generalization of multidimensional singular value
decomposition introduced in \cite{FAL10b} for the case of Hilbert
tensor spaces.

\subsection{Nonlinear Laplacian}

We here present an example taken from \cite{CAN05}. We refer to
section \ref{sec:sobolev} for the introduction to the properties of
Sobolev spaces. Let $\Omega = \Omega_1\times \hdots \times
\Omega_d$. Given some $p > 2$, we let
$\mathbf{V}_{\|\cdot\|} = H^{1,p}_0(\Omega)$, which
is the closure of $C_c^\infty(\Omega)$ (functions in
$C^\infty(\Omega)$ with compact support in $\Omega$) in
$H^{1,p}(\Omega)$ with respect to the norm in $H^{1,p}(\Omega)$. We
equip $H^{1,p}_0(\Omega)$ with  the norm
$$
\norm[\mathbf{v}] = \left(\sum_{k=1}^d
\norm[\partial_{x_k}(\mathbf{v})]_{L^p(\Omega)} \right)^{1/p}
$$
which is equivalent to the norm $\norm_{1,p}$ on $H^{1,p}(\Omega)$
introduced in section \ref{sec:sobolev}. We then introduce the
functional $J:\mathbf{V}_{\|\cdot\|} \rightarrow \Rbb$ defined by
$$
J(\mathbf{v}) = \frac{1}{p}\norm[\mathbf{v}]^p
- \langle \mathbf{f},\mathbf{v} \rangle,
$$
with $\mathbf{f}\in \mathbf{V}_{\|\cdot\|}^\ast$. Its Fr\'echet differential is
$$
J'(\mathbf{v}) = A(\mathbf{v}) -\mathbf{f}
$$
where
$$
A(\mathbf{v}) = -\sum_{k=1}^d  \frac{\partial}{\partial
x_k}\left(\left\vert \frac{\partial \mathbf{v}}{\partial
x_k}\right\vert^{p-2}  \frac{\partial \mathbf{v}}{\partial
x_k}\right)
$$
$A$ is called the $p$-Laplacian. Assumptions (A1)-(A3) on the
functional are satisfied (see \cite{CAN05}). Assumption (B3) on the
set $\Rc_1\left( \left._a
\bigotimes_{j=1}^{d}H^{m,p}_0(\Omega_{j})\right. \right)$ is also
satisfied. Indeed, it can be easily proved from
Proposition~\ref{prop:S1W1p_closed} that the set $\Rc_1\left(
\left._a \bigotimes_{j=1}^{d}H^{1,p}_0(\Omega_{j})\right. \right)$
is weakly closed in $(H^{1,p}_0(\Omega),\norm_{1,p})$. Since the
norm $\norm$ is equivalent to $\norm_{1,p}$ on $H^{1,p}_0(\Omega)$,
it is also weakly closed in $(H^{1,p}_0(\Omega),\norm)$.
\par
Then, from Theorem \ref{th:convergence}, the progressive PGD
converges if there exists a subsequence of updates of type $(l)$.

\subsection{Linear elliptic variational problems on Hilbert spaces}
Let $\mathbf{V}_{\|\cdot\|} = \overline{V_1\otimes_a \hdots \otimes_a V_d}^{\|\cdot\|}$ be a tensor product of Hilbert spaces. We consider the following problem
$$
J(\mathbf{u}) = \min_{\mathbf{v}\in K} J(\mathbf{v}),\quad J(\mathbf{v}) =
\frac{1}{2}a(\mathbf{v},\mathbf{v}) - \ell(\mathbf{v})
$$
where $K\subset \mathbf{V}_{\|\cdot\|}$,
$a:\mathbf{V}_{\|\cdot\|}\times \mathbf{V}_{\|\cdot\|}\rightarrow \Rbb$ is
a coercive
continuous symmetric bilinear form,
\begin{align*}
&a(\mathbf{v},\mathbf{v}) \ge \alpha \Vert \mathbf{v} \Vert^2
\quad \forall \mathbf{v}\in \mathbf{V}_{\|\cdot\|} ,\\
&a(\mathbf{v},\mathbf{w}) \le \beta \Vert \mathbf{v} \Vert\Vert \mathbf{w}
\Vert \quad \forall \mathbf{v},\mathbf{w}\in \mathbf{V}_{\|\cdot\|},
\end{align*}
$\ell:\mathbf{V}_{\|\cdot\|}\rightarrow \Rbb$ is a continuous linear form,
\begin{align*}
&\ell(\mathbf{v})\le \gamma \Vert \mathbf{v}\Vert \quad \forall \mathbf{v}
\in \mathbf{V}_{\|\cdot\|}.
\end{align*}

\paragraph{Case where $K$ is a closed and convex subset of $\mathbf{V}_{\|\cdot\|}$.}
The solution $\mathbf{u}$ is equivalently characterized by the variational
inequality
$$
a(\mathbf{u},\mathbf{v}-\mathbf{u}) \ge \ell(\mathbf{v}-\mathbf{u})
\quad \forall \mathbf{v}\in K
$$
In order to apply the results of the present paper, we have to
recast the problem as an optimization problem in $\mathbf{V}_{\|\cdot\|}$.
We introduce a
convex and Fr\'echet differentiable functional $j:\mathbf{V}_{\|\cdot\|}
\rightarrow \Rbb$
with Fr\'echet differential $j':\mathbf{V}_{\|\cdot\|}\rightarrow
\mathbf{V}_{\|\cdot\|}^\ast$, such that
$j(\mathbf{v})=0$ if $\mathbf{v}\in K$ and $j(\mathbf{v})>0$ if
$\mathbf{v}\notin K$. We further assume
that $j'$ is Lipschitz on bounded sets. We let $j_\epsilon(\mathbf{v}) =
\epsilon^{-1}j(\mathbf{v})$, with $\epsilon>0$, and introduce the following
penalized problem
$$
J_\epsilon(\mathbf{u}_\epsilon) = \min_{\mathbf{v}\in \mathbf{V}_{\|\cdot\|}}
J_\epsilon(\mathbf{v}),\quad
J_\epsilon(\mathbf{v}) = J(\mathbf{v}) +  j_\epsilon(\mathbf{v})
$$
As $\epsilon\to 0$, $j_\epsilon$ tends to the indicator function of
set $K$ and $\mathbf{u}_\epsilon \to \mathbf{u}$
(see e.g. \cite{GLO84}). Assumptions
(A1)-(A2) are verified since $J_\epsilon$ is Fr\'echet
differentiable with Fr\'echet differential $J_\epsilon': \mathbf{V}_{\|\cdot\|}
\rightarrow
\mathbf{V}^\ast_{\|\cdot\|}$ defined by
$$
\langle J_\epsilon'(\mathbf{v}),\mathbf{z}\rangle = a(\mathbf{v},\mathbf{z})
-\ell(\mathbf{z}) + \langle
j_\epsilon'(\mathbf{v}),\mathbf{z}\rangle,
$$
and $J_\epsilon$ is elliptic since
$$
\langle J_\epsilon'(\mathbf{v}) -J_\epsilon'(\mathbf{w}),\mathbf{v}-\mathbf{w}
\rangle = a(\mathbf{v}-\mathbf{w},\mathbf{v}-\mathbf{w}) +
\langle j_\epsilon'(\mathbf{v})-j_\epsilon'(\mathbf{w}),\mathbf{v}-
\mathbf{w}\rangle \ge \alpha
\norm[\mathbf{v}-\mathbf{w}]^2
$$
Assumption (A3) comes from the continuity of $a$ and $\ell$ and from
the properties of $j'$.

\paragraph{Case where $K=\mathbf{V}_{\|\cdot\|}$.}
If $K=\mathbf{V}_{\|\cdot\|}$, we recover the classical case of linear elliptic
variational problems on Hilbert spaces analyzed in \cite{FAL10b}. In
this case, the bilinear form $a$ defines a norm
$\norm[\mathbf{v}]_a=\sqrt{a(\mathbf{v},\mathbf{v})}$ on $\mathbf{V}_{\|\cdot\|}$, equivalent to the norm $\norm$.
The functional $J$ is here equal to
$$
J(\mathbf{v}) = \frac{1}{2}\norm[\mathbf{u}-\mathbf{v}]_a^2 -
\frac{1}{2}\norm[\mathbf{u}]_a^2
$$ The progressive PGD can be interpreted as a generalized
Eckart-Young decomposition (generalized singular value
decomposition) with respect to this non usual metric, and defined
progressively by
$$
\norm[ \mathbf{u} - \mathbf{u}_{m} ]^2_a  = \min_{\mathbf{z}\in \Sc_1}
\norm[\mathbf{u}-\mathbf{u}_{m-1}-\mathbf{z}]^2_a
$$
We have
$$
J(\mathbf{u}_{m-1})-J(\mathbf{u}_m) = \frac{1}{2}\Vert \mathbf{z}_m\Vert_a^2 :=
\frac{1}{2}\sigma_m^2
$$
and
$$
\Vert \mathbf{u}-\mathbf{u}_m\Vert^2_a =  \Vert \mathbf{u}\Vert_a^2
- \sum_{k=1}^m \sigma_k^2 \underset{m\to \infty}{\longrightarrow} 0
$$
where $\sigma_m$ can be interpreted as the dominant singular value
of $(\mathbf{u}-\mathbf{u}_{m-1})\in \mathbf{V}_{\|\cdot\|}$. The
PGD method has been successfully applied to this class of problems
in different contexts: separation of spatial coordinates for the
solution of Poisson equation in high dimension \cite{AMM07,LEB09},
separation of physical variables and random parameters for the
solution of parameterized stochastic partial differential equations
\cite{NOU07}.

\section{Conclusion}
\label{sec:conclu} In this paper, we have considered the solution of
a class of convex optimization problems in tensor Banach spaces with
a family of methods called progressive Proper Generalized
Decomposition (PGD) that consist in constructing a sequence of
approximations by successively correcting approximations with
optimal elements in a given subset of tensors. We have proved the
convergence of a large class of PGD algorithms (including update
strategies) under quite general assumptions on the convex functional
and on the subset of tensors considered in the successive
approximations. The resulting succession of approximations has been
interpreted as a generalization of a multidimensional singular value
decomposition (SVD). Some possible applications have been
considered. \par Further theoretical investigations are still
necessary for a better understanding of the different variants of
PGD methods and the introduction of more efficient algorithms for
their construction (e.g. alternated direction algorithms, ...). The
analysis of algorithms for the solution of successive approximation
problems on tensor subsets is still an open problem. In the case of
dimension $d=2$, further analyses would be required in order to
better characterize the PGD as a direct extension of SVD when
considering more general norms.

\bibliographystyle{plain}

\end{document}